\newif \ifSIAM \SIAMfalse
\newcommand{\cor}[1]{#1}
\newtheorem{assumption}[theorem]{Assumption}
\newtheorem{theorem}{Theorem}
\newtheorem{lemma}[theorem]{Lemma}
\newtheorem{corollary}[theorem]{Corollary}
\newtheorem{assumption}[theorem]{Assumption}
\begin{document}

\newcommand\footnotemarkfromtitle[1]{%
\renewcommand{\thefootnote}{\fnsymbol{footnote}}%
\footnotemark[#1]%
\renewcommand{\thefootnote}{\arabic{footnote}}}

\title{An unfitted Hybrid High-Order method for elliptic interface problems\footnotemark[1]}
\author{Erik Burman\footnotemark[2] \and Alexandre Ern\footnotemark[3]}

\ifSIAM
\date{Draft version \today}
\fi

\maketitle

\renewcommand{\thefootnote}{\fnsymbol{footnote}} \footnotetext[1]{Draft
  version, \today}
\footnotetext[2]{Department of Mathematics, University College London, London, 
UK--WC1E  6BT, UK.}
\footnotetext[3]{Universit\'e Paris-Est, CERMICS (ENPC),
  77455 Marne-la-Vall\'ee cedex 2, and INRIA, Paris, France.}

\renewcommand{\thefootnote}{\arabic{footnote}}

\begin{abstract} 
We design and analyze a Hybrid High-Order (HHO) method on unfitted meshes to approximate elliptic interface problems. The curved interface can cut through the mesh cells in a very general fashion. As in classical HHO methods, the present unfitted method introduces cell and face unknowns in uncut cells, but doubles the unknowns in the cut cells and on the cut faces. The main difference with classical HHO methods is that a Nitsche-type formulation is used to devise the local reconstruction operator. As in classical HHO methods, cell unknowns can be eliminated locally leading to a global problem coupling only the face unknowns by means of a compact stencil. We prove stability estimates and optimal error estimates in the $H^1$-norm. Robustness with respect to cuts is achieved by a local cell-agglomeration procedure taking full advantage of the fact that HHO methods support polyhedral meshes. Robustness with respect to the contrast in the material properties from both sides of the interface is achieved by using material-dependent weights in Nitsche's formulation.
\end{abstract}

\ifSIAM
\begin{keywords}
Interface problem, Hybrid High-Order method,
Unfitted mesh, Nitsche's method
\end{keywords}

\begin{AMS}
65N15, 65N30, 35J15
\end{AMS}
\fi

\pagestyle{myheadings} \thispagestyle{plain} 
\ifSIAM
\markboth{E. BURMAN, A. ERN}{An unfitted HHO method}
\fi

\section{Introduction} 

The Hybrid High-Order (HHO) method has been recently introduced
in~\cite{DiPEr:15} for linear elasticity problems
and in~\cite{DiPEL:14} for diffusion problems.
The HHO method is formulated in terms of cell 
and face unknowns. The cell unknowns can be eliminated locally 
by using a Schur complement technique (also known as static condensation),
leading to a global transmission problem coupling only the face
unknowns by means of a compact stencil. 
The HHO method is devised locally from two ingredients: 
a reconstruction operator and a stabilization operator. 
This leads to a discretization method that 
supports general meshes (with possible polyhedral cells and non-matching
interfaces), is locally conservative 
and delivers energy-norm error estimates of 
order $(k+1)$ (and $L^2$-norm error estimates of order $(k+2)$ under full
elliptic regularity) if polynomials of order $k\ge0$ are used for the 
face unknowns. As shown in~\cite{CoDPE:16}, the HHO method
can be fitted into the family of Hybridizable Discontinuous Galerkin (HDG) 
methods introduced in~\cite{CoGoL:09} and is closely related to the nonconforming Virtual Element Method (ncVEM) studied in \cite{AyLiM:16}.

The use of polyhedral meshes can greatly simplify the meshing of complicated geometries. Nevertheless, in some situations, it is still convenient to avoid the meshing of boundaries and internal interfaces. This is the case when the boundary changes during the computation, such as in free-boundary and optimization problems, and when the boundary or the internal interface is curved. In this paper, we are interested in devising a high-order approximation method for elliptic interface problems. To handle difficulties with curved interfaces in classical finite element methods, boundary-penalty methods \cite{Babuska:70,BE87} have been proposed, where the computational mesh does not need to respect the interface. In order to improve the accuracy, unfitted finite element methods were introduced in \cite{HH02} drawing on the seminal ideas of Nitsche \cite{Nit71} for the weak imposition of boundary conditions. The key idea is to design the finite element space so that singularities over the interface can be represented by a pair of polynomials in the cut cells. Similar approaches were then proposed in the context of discontinuous Galerkin methods in~\cite{BasEn:09,Massjung12,JL13}. 

A well-known difficulty for unfitted finite element methods is that the conditioning of the resulting linear system has a strong dependence on how the interface cuts the mesh cells. This means that for unfavorable cuts, Nitsche's formulation can be severely ill-conditioned. This difficulty has been solved in \cite{HH02} by using weighted coupling terms with cut-dependent weights. However, there is a lack of robustness when the material properties (e.g., the diffusivities on each side of the interface) are highly contrasted. Robustness with respect to the contrast can be achieved by using material-dependent weights, as proposed in different contexts in \cite{BZ06,ESZ09,BGSS17}, and in this case, a different mechanism is needed to handle unfavorable cuts. In the case of $H^1$-conforming methods, this problem can be overcome by adding a penalty term that weakly couples the polynomial approximation in adjacent cells as proposed in~\cite{Burman:10}. When using a discontinuous Galerkin approximation, another approach was proposed in~\cite{JL13} for fictitious domain problems where mesh cells with unfavorable cuts are merged with neighboring elements having a favorable cut. This idea is also explored in \cite{HWX17} for interface problems approximated by conforming finite elements on quadrilateral meshes whereby cells with an unfavorable cut are merged with adjacent quadrilateral cells (thus creating hanging nodes). 

The so-called cutFEM framework was developed recently in~\cite{BCHLM15} so as to couple different physical models over unfitted interfaces and to discretize PDEs over unfitted embedded submanifolds. The high-order approximation of the geometry of the interface was considered recently in~\cite{BHL16} using a boundary correction based on local Taylor expansions and in~\cite{LR16} using an iso-parametric technique, the common objective being to simplify the numerical integration on domains with curved boundaries by allowing a piecewise affine representation of the interface. The cutFEM paradigm has also been applied to a variety of complex flow problems, see, e.g., \cite{MaScW:18}, the recent PhD thesis \cite{Schott17}, and references therein. A conforming finite element method with local remeshing in subcells, effectively fitting the mesh to the interface, followed by elimination of the local degrees of freedom, was introduced in \cite{FR14}.

The goal of the present work is to devise and analyze an HHO method using unfitted meshes. The approach consists of doubling the unknowns in the cut cells and the cut faces, in a spirit similar to unfitted finite element methods. For brevity, we only consider elliptic interface problems, but the material can be readily adapted to treat the (simpler) case of fictitious domain problems; such an adaptation is briefly reported in \cite{BE:17}. Our approach combines the ideas of HHO methods (and more broadly HDG methods) with those from \cite{HH02} concerning Nitsche's formulation, but with material-dependent weights rather than cut-dependent weights, and those from~\cite{JL13} to handle unfavorable cuts by a local cell-agglomeration procedure. The cell-agglomeration procedure takes full advantage of the fact that the HHO method supports general meshes with polyhedral cells. The resulting unfitted HHO method is robust with respect to the cuts and to the material properties. Our stability and error analysis of the unfitted HHO method sheds some novel light in the analysis of HHO methods. On the one hand, the local reconstruction operator is based on Nitsche's formulation and cannot be related, as in classical HHO methods, to a local elliptic projector. On the other hand, the error is measured by using some projector that is somewhat more elaborate than the local $L^2$-orthogonal projector used in classical HHO methods. Our main result is an $H^1$-error estimate of order $(k+1)$ if polynomials of order $k\ge0$ are used for the face unknowns and polynomials of order $(k+1)$ are used for the cell unknowns. We observe that we do not consider here cell unknowns of order $k$ as in classical HHO methods. The overhead induced by this modification is marginal since, as usual, all the cell unknowns can be eliminated locally. \cor{Finally, we mention the recent numerical work combining the HDG method with the X-FEM technique for fictitious domain \cite{GurSKF:16} and elliptic interface \cite{GurKF:17} problems. The main differences with the present unfitted HHO method is that we do not introduce unknowns at the interface (but rather double the unknowns at the mesh faces cut by the interface) and that we provide a thorough analysis including robustness with respect to cuts and contrast.}

This paper is organized as follows. In Section~\ref{sec:model}, we introduce the elliptic interface problem we want to approximate. 
In Section~\ref{sec:discrete}, we present the discrete setting, including our
main notation for the cut cells and the two assumptions we require on the mesh, and 
we prove two key trace inequalities under these assumptions. 
In Section~\ref{sec:HHO}, we present the unfitted HHO method.
In Section~\ref{sec:proof}, we present our stability and error analysis;
our main result is Theorem~\ref{th:main}.
Finally, in Section~\ref{sec:build_mesh} we show how the two mesh properties
introduced in Section~\ref{sec:discrete} can be satisfied 
by using a local cell-agglomeration
procedure (under the assumption that the mesh is
fine enough to resolve the interface). 
Computational results will be reported in a separate publication.

\section{Model problem} 
\label{sec:model}

Let $\Omega$ be a domain in $\Real^d$ (open, bounded, connected, Lipschitz subset) and consider a partition of $\Omega$ into two disjoint subdomains so that $\overline\Omega = \overline{\Omega^1} \cup \overline{\Omega^2}$ with the interface $\Gamma = \partial \Omega^1 \cap \partial \Omega^2$. The unit normal vector $\bn_\Gamma$ to $\Gamma$ conventionally points from $\Omega^1$ to $\Omega^2$. For a smooth enough function defined on $\Omega$, we define its jump across $\Gamma$ as $\jump{v}_\Gamma := v_{|\Omega^1} - v_{|\Omega^2}$. We consider the following interface problem:
\begin{subequations}\label{eq:interface}\begin{alignat}{2}
- \DIV ( \kappa \GRAD u) &= f, &\qquad&\text{in $\Omega^1\cup\Omega^2$}, \\
\jump{u}_\Gamma &= g_{\textup{D}}, &\qquad&\text{on $\Gamma$}, \\
\jump{\kappa\GRAD u}_\Gamma\SCAL\bn_\Gamma & = g_{\textup{N}}, &\qquad&\text{on $\Gamma$}, \\
u&=0 &\qquad&\text{on $\partial\Omega$},
\end{alignat}\end{subequations}
with $f\in\Ldeux$, $g_{\textup{D}}\in H^{\frac12}(\Gamma)$, $g_{\textup{N}}\in L^2(\Gamma)$. For simplicity we consider a homogeneous Dirichlet condition on $\partial\Omega$. To avoid technicalities, we assume that the diffusion coefficient $\kappa$ is scalar-valued and that $\kappa^i:=\kappa_{|\Omega^i}$ is constant for \cor{each} $i\in\{1,2\}$. Without loss of generality, we assume that the numbering of the two subdomains is such that $\kappa^1 < \kappa^2$. In the rest of the paper, we assume that the interface $\Gamma$ is a smooth $(d-1)$-dimensional manifold of class $C^2$ that is not self-intersecting. This assumption can be relaxed at the price of additional technical issues that are not explored herein. 

\section{Discrete setting}
\label{sec:discrete}

We assume that the domain $\Omega$ is a polyhedron with planar faces in $\Real^d$. Let $\famTh$ be a shape-regular family of matching meshes covering $\Omega$ exactly. The meshes can have cells that are polyhedra with planar faces in $\Real^d$, and hanging nodes are also possible. The mesh cells are considered to be open subsets of $\Real^d$. For a subset $S\subset \Real^d$, $h_S$ denotes the diameter of $S$, and for a mesh $\calT_h$, the index $h$ refers to the maximal diameter of the mesh cells. The shape-regularity criterion for polyhedral meshes is that they admit a matching simplicial sub-mesh that satisfies the usual shape-regularity criterion in the sense of Ciarlet and such that each sub-cell (resp., sub-face) belongs to only one mesh cell (resp., at most one mesh face). The shape-regularity of the mesh sequence is quantified by a parameter $\rho\in(0,1)$ (see Section~\ref{sec:build_mesh} for further insight). In what follows, $B(\by,a)$ denotes the open ball with center $\by$ and radius $a$\cor{, $d(\by,A)$ denotes the distance of the point $\by$ to the set $A$, and $d(A,A')$ denotes the Hausdorff distance between the two sets $A,A'$.}

\subsection{Main notation for unfitted meshes}

Since the meshes are not fitted to the subsets $\Omega^1$ and $\Omega^2$, there are mesh cells in $\calT_h$ that are cut by the interface $\Gamma$. Let us define the partition $\calT_h = \calT_h^1\cup \calT_h^\Gamma\cup \calT_h^2$, where the subsets
\begin{subequations}
\begin{align}
\calT_h^i&:= \{ T\in \calT_h \tq T \subset \Omega^i\}, \quad \forall \iot,
\\
\calT_h^\Gamma &:= \{ T\in \calT_h \tq \mes_{d-1}(T\cap\Gamma)>0\},
\end{align} 
\end{subequations}
collect, respectively, the mesh cells inside the subdomain $\Omega^i$, $\iot$, and the mesh cells cut by the interface $\Gamma$. For any mesh cell $T\in \calT_h^\Gamma$ cut by the interface, we define 
\begin{equation}
T^i:=T\cap\Omega^i, \qquad
T^\Gamma:=T\cap\Gamma.
\end{equation} 
The boundary of the sub-cell $T^i$ is decomposed as follows:
\begin{equation}
\partial T^i=(\partial T)^i \cup T^\Gamma,
\end{equation} 
with the notation $(\partial T)^i = \partial T\cap\Omega^i$. For any mesh cell $T\in\calT_h$, the set $\calF_{\dT}$ collects the mesh faces located at the boundary $\dT$ of $T$. Whenever $T\in\calT_h^\Gamma$, we consider the set 
\begin{equation} \label{eq:def_calFTi}
\calF_{(\dT)^i} = \{ F^i=F\cap \Omega^i\tq F\in\calF_{\dT},\, \mes_{d-1}(F^i)>0\}.
\end{equation}
The sub-faces in $\calF_{(\dT)^i}$ form a partition of $(\partial T)^i$ (but not of $\partial T^i$ since $T^\Gamma$ is not included in $\calF_{(\dT)^i}$). The notation is illustrated in Figure~\ref{fig:cutcell1}. Since the interface $\Gamma$ is not self-intersecting and smooth, there exists a length scale $\ell_0$ so that, for all $\bs\in\Gamma$, the subset $\Gamma \cap B(\bs,\ell_0)$ has only one connected component. In what follows, we assume that the mesh is fine enough so that $h\le \ell_0$. This assumption implies that $T^\Gamma$ has a single connected component, and that the sub-cells $T^1$ and $T^2$ are connected. We also assume that $d(\Gamma,\partial\Omega)\ge 2h$.

\begin{figure}[htb]
\centering
\includegraphics[width=0.25\linewidth]{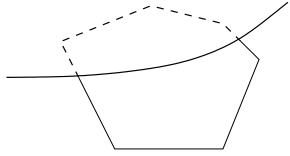}
\caption{Hexagonal cell $T$ cut by the interface $\Gamma$. The subdomain $\Omega^1$ is located below $\Gamma$, and the subdomain $\Omega^2$ is located above $\Gamma$. $(\partial T)^1$ is shown \cor{using solid lines} and $(\partial T)^2$ \cor{using dashed lines}; the sets $\calF_{(\dT)^1}$ and $\calF_{(\dT)^2}$ consist each of four elements, two of which are original faces of $T$ and two of which are sub-faces of the two faces of $T$ cut by the interface.}
\label{fig:cutcell1} 
\end{figure}

Let $l\in\polN$ be a polynomial degree and let $S$ be an $m$-dimensional affine manifold in $\Omega$ ($m\le d$); typically, $S$ is a mesh (sub-)cell (so that $m=d$) or a mesh (sub-)face (so that $m=d-1$). Then $\polP^l(S)$ denotes the space composed of the restriction to $S$ of $d$-variate polynomials of degree at most $l$.

\subsection{Mesh properties}

We make the following two assumptions on the mesh. Assumption~\ref{ass:Gamma} means that the interface is properly described by the mesh; this assumption is quantified by an interface regularity parameter $\gamma\in (0,1)$. Assumption~\ref{ass:ball} means that all the mesh cells are cut favorably by the interface; this property is quantified by a cut parameter $\delta\in (0,1)$. 
We will show in Section~\ref{sec:build_mesh} how to produce a shape-regular 
(polyhedral) mesh so that Assumption~\ref{ass:Gamma} and 
Assumption~\ref{ass:ball} hold true. The idea is that Assumption~\ref{ass:Gamma} can be satisfied by refining the mesh, whereas Assumption~\ref{ass:ball} can be satisfied by means of a local cell-agglomeration procedure.

\begin{assumption}[Resolving $\Gamma$] \label{ass:Gamma}
There is $\gamma\in (0,1)$ s.t.~for all $T\in\calT_h^\Gamma$, 
there is a point $\hat \bx_T \in \Real^d$ so that, for all $\bs\in T^\Gamma$,
$\|\hat\bx_T-\bs\|_{\ell^2}\le \gamma^{-1}h_T$ and $d(\hat \bx_T,T_\bs\Gamma)\ge \gamma h_T$ where $T_\bs\Gamma$ is the tangent plane to $\Gamma$ at the point $\bs$.
\end{assumption}

\begin{assumption}[Cut cells] \label{ass:ball}
There is $\delta\in (0,1)$ such that, for all $T\in\calT_h^\Gamma$ and all $i\in\{1,2\}$, there is $\tilde\bx_{T^i} \in T^i$ so that
\begin{equation} \label{eq:boule_in_Ti}
B(\tilde\bx_{T^i},\delta h_{T})\subset T^i. 
\end{equation} 
\end{assumption}

\begin{figure}[htb]
\centering
\scalebox{0.5}{\input{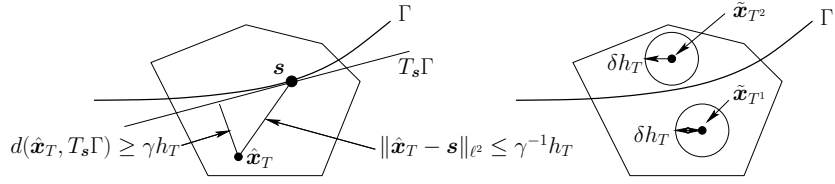}}
\caption{\cor{Illustration of Assumption~\ref{ass:Gamma} (left) and of Assumption~\ref{ass:ball} (right) for an hexagonal cell $T$ cut by the interface $\Gamma$.}}
\label{fig:cutcell1_gd} 
\end{figure}

\subsection{Trace inequalities}

The purpose of Assumption~\ref{ass:Gamma} is to prove a multiplicative trace inequality that is needed to establish optimal approximation properties for the unfitted HHO method, whereas the purpose of Assumption~\ref{ass:ball} is to prove a discrete trace inequality that is needed in the stability analysis of the unfitted HHO method. Let us now prove these two trace inequalities.

\begin{lemma}[Multiplicative trace inequality] \label{lem:mult_tr}
There are real numbers $c_{\textup{mtr}}>0$ and $\theta_{\textup{mtr}}\ge1$, depending on the mesh regularity parameter $\rho\in (0,1)$ and the interface regularity parameter $\gamma\in (0,1)$, such that, for all $T\in\calT_h^\Gamma$, there is $\check\bx_T\in T$ so that, for all $i\in\{1,2\}$ and all $v\in H^1(\Tdg)$ with $\Tdg=B(\check\bx_T,\theta_{\textup{mtr}} h_T)$,
\begin{equation} \label{eq:mtrace}
\|v\|_{L^2(\partial T^i)} \le c_{\textup{mtr}}\, \bigg(h_T^{-\frac12} \|v\|_{L^2(\Tdg)} + \|v\|_{L^2(\Tdg)}^{\frac12}\|\GRAD v\|_{L^2(\Tdg)}^{\frac12} \bigg).
\end{equation}
\end{lemma}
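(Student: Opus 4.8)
The plan is to split the boundary $\partial T^i=(\partial T)^i\cup T^\Gamma$ into its polyhedral part $(\partial T)^i$, made of (sub-)faces lying in planes, and its curved part $T^\Gamma$ lying on $\Gamma$, and to bound the two contributions by different means. For the polyhedral part I would rely on the standard multiplicative trace inequality on shape-regular simplices. Each sub-face $F^i\in\calF_{(\dT)^i}$ satisfies $\|v\|_{L^2(F^i)}\le\|v\|_{L^2(F)}$, where $F$ is the parent face of $T$, and $F$ is partitioned into simplicial sub-faces of the shape-regular sub-mesh underlying $T$. Applying the classical inequality $\|v\|_{L^2(\partial K)}^2\lesssim h_K^{-1}\|v\|_{L^2(K)}^2+\|v\|_{L^2(K)}\|\GRAD v\|_{L^2(K)}$ on each sub-simplex $K\subset T$ and summing, using $h_K\ge\rho\,h_T$, yields
\[
\|v\|_{L^2((\partial T)^i)}^2\lesssim h_T^{-1}\|v\|_{L^2(T)}^2+\|v\|_{L^2(T)}\,\|\GRAD v\|_{L^2(T)},
\]
with the hidden constant depending only on $\rho$.

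The heart of the matter is the curved part, for which I would use a divergence identity with a vector field tailored to Assumption~\ref{ass:Gamma}. Set $\bm{z}(\bx):=\sigma_i\,(\bx-\hat\bx_T)$ with a sign $\sigma_i\in\{-1,+1\}$ to be fixed. Since $\bs$ lies in $T_\bs\Gamma$, one has $d(\hat\bx_T,T_\bs\Gamma)=|(\hat\bx_T-\bs)\SCAL\bn_\Gamma(\bs)|$, so Assumption~\ref{ass:Gamma} yields $|(\bs-\hat\bx_T)\SCAL\bn_\Gamma(\bs)|\ge\gamma h_T$ for all $\bs\in T^\Gamma$. As $\Gamma$ is of class $C^2$ (so $\bn_\Gamma$ is continuous) and $T^\Gamma$ is connected (because $h\le\ell_0$), the scalar map $\bs\mapsto(\bs-\hat\bx_T)\SCAL\bn_\Gamma(\bs)$ keeps a constant sign on $T^\Gamma$; I would then pick $\sigma_i$ so that $\bm{z}\SCAL\bn_{T^i}\ge\gamma h_T$ on $T^\Gamma$, where $\bn_{T^i}$ is the outward normal of $T^i$ on $T^\Gamma$, equal to $+\bn_\Gamma$ for $i=1$ and $-\bn_\Gamma$ for $i=2$ (this opposite orientation is exactly why $\sigma_i$ must depend on $i$). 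Integrating $\DIV(v^2\bm{z})=v^2\DIV\bm{z}+2v\,\bm{z}\SCAL\GRAD v$ over $T^i$ (first for $v\in C^1$, then by density) and retaining only the favorable $T^\Gamma$ contribution on the boundary gives
\[
\gamma h_T\,\|v\|_{L^2(T^\Gamma)}^2\le\Big|\int_{T^i}\big(v^2\DIV\bm{z}+2v\,\bm{z}\SCAL\GRAD v\big)\Big|+\int_{(\partial T)^i}v^2\,|\bm{z}\SCAL\bn|.
\]
Since $\DIV\bm{z}=\pm d$ and $|\bm{z}|\lesssim\gamma^{-1}h_T$ on $T$ (from $\|\hat\bx_T-\bs\|\le\gamma^{-1}h_T$ and the fact that $h_T$ is the diameter of $T$), dividing by $h_T$ bounds $\|v\|_{L^2(T^\Gamma)}^2$ by a constant times $h_T^{-1}\|v\|_{L^2(T^i)}^2+\|v\|_{L^2(T^i)}\|\GRAD v\|_{L^2(T^i)}+\|v\|_{L^2((\partial T)^i)}^2$.

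To conclude, I would insert the polyhedral bound from the first step into the last term and use $T^i\subset T$, obtaining $\|v\|_{L^2(\partial T^i)}^2\lesssim h_T^{-1}\|v\|_{L^2(T)}^2+\|v\|_{L^2(T)}\|\GRAD v\|_{L^2(T)}$ with a constant depending only on $\rho$ and $\gamma$. Choosing any $\check\bx_T\in T$ and $\theta_{\textup{mtr}}\ge1$ large enough that $T\subset\Tdg=B(\check\bx_T,\theta_{\textup{mtr}}h_T)$ (possible since $h_T$ is the diameter of $T$), relaxing the volume integrals from $T$ to $\Tdg$, and taking square roots via $\sqrt{a+b}\le\sqrt a+\sqrt b$ delivers~\eqref{eq:mtrace}. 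The main obstacle is precisely the curved-boundary step: the whole estimate rests on exhibiting a vector field whose normal component on $T^\Gamma$ is bounded below by a constant multiple of $h_T$ uniformly with respect to how $\Gamma$ cuts $T$, and this is what Assumption~\ref{ass:Gamma} provides through its distance-to-tangent-plane condition, while the connectedness of $T^\Gamma$ is what lets the sign of that normal component be fixed once and for all.
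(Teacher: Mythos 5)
Your proof is correct, and its treatment of the curved part $T^\Gamma$ --- the only delicate step --- takes a genuinely different route from the paper's. The paper integrates $\tfrac{\partial}{\partial t}\big(t^d v(\bp(\bs,t))^2\big)$ along the segments joining $\hat\bx_T$ to the points of $T^\Gamma$ and then performs the change of variables $d(\hat\bx_T,T_\bs\Gamma)\,t^{d-1}\mathrm{d}t\,\mathrm{d}\bs=\mathrm{d}\bp$ over the resulting cone $C(T)$; since Assumption~\ref{ass:Gamma} only places $\hat\bx_T$ in $\Real^d$ (not in $T$), this cone may leave $T$, which is exactly why the volume norms in the paper's statement live on the enlarged ball $\Tdg\supset T$. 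Your Gauss--Green identity with $\bm{z}=\sigma_i(\bx-\hat\bx_T)$ integrates only over $T^i$, so you never leave $T$ and in fact prove the slightly sharper bound with $\|v\|_{L^2(T)}$ in place of $\|v\|_{L^2(\Tdg)}$, which trivially implies \eqref{eq:mtrace} after relaxing to any ball containing $T$. Both arguments extract the same information from Assumption~\ref{ass:Gamma}, namely the uniform transversality $d(\hat\bx_T,T_\bs\Gamma)=|(\bs-\hat\bx_T)\SCAL\bn_\Gamma(\bs)|\ge \gamma h_T$: in the paper it makes the Jacobian of the change of variables nondegenerate, in your version it bounds $\bm{z}\SCAL\bn_{T^i}$ from below on $T^\Gamma$ once the sign $\sigma_i$ is fixed. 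The price of your route is twofold but minor: the divergence identity generates the extra boundary term $\|v\|_{L^2((\partial T)^i)}^2$, so the curved estimate is no longer independent of the flat one and you must feed the simplicial trace bound back into it; and you need the sign-constancy of $(\bs-\hat\bx_T)\SCAL\bn_\Gamma(\bs)$ on $T^\Gamma$, which you correctly justify from the connectedness of $T^\Gamma$ (guaranteed by $h\le\ell_0$) and the continuity of $\bn_\Gamma$. The remaining ingredients (the bound $\|\bm{z}\|_{\ell^2}\le(1+\gamma^{-1})h_T$ on $T$, the reduction of $(\partial T)^i$ to the simplicial sub-mesh, the density argument) are all in order, so the argument is sound.
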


\begin{proof}
The proof is inspired by the ideas from, e.g., \cite[Section 6]{WX10}. Let $T\in\calT_h^\Gamma$ and $i\in\{1,2\}$, and recall that $\partial T^i=(\partial T)^i \cup T^\Gamma$. We prove~\cor{\eqref{eq:mtrace}} for $v \in C^1(\Tdg)$ and then extend this bound to $H^1(\Tdg)$ by a density argument. Let us first bound $\|v\|_{L^2(T^\Gamma)}$. 
Integrating, for all $\bs\in T^\Gamma$, along the segment $\{\bp(\bs,t) := (1-t)\hat\bx_{T}+t \bs, \; \forall t\in [0,1]\}$, where the point $\hat\bx_T$ is given by Assumption~\ref{ass:Gamma}, we obtain
\[
v^2(\bs) = \int_{0}^1 \frac{\partial}{\partial t}\, \bigg( t^d\, v(\bp(\bs,t))^2\bigg)~\mbox{d}t, \qquad \forall \bs \in T^\Gamma.
\]
Integrating over $\bs\in T^\Gamma$ and developing the derivative with respect to $t$, we infer that
\[
\|v \|_{L^2(T^\Gamma)}^2= \int_{T^\Gamma} \int_{0}^1 \big(dt^{d-1}v(\bp(\bs,t))^2 + t^d v(\bp(\bs,t)) \nabla v(\bp(\bs,t)) {\cdot} (\bs - \hat\bx_{T})\big)~\mbox{d}t~\mbox{d}\bs.
\]
Let us introduce the cone $C(T)=\{\bp(\bs,t), \; \forall t\in[0,1],\; \forall \bs\in T^\Gamma\}$. Since $d(\hat\bx_{T},T_{\bs}\Gamma)\ge \gamma h_T$ (see Assumption~\ref{ass:Gamma}), the change of variable $d(\hat\bx_{T},T_{\bs}\Gamma)t^{d-1}\mbox{d}t\mbox{d}\bs = \mbox{d}\bp$ is legitimate. We then obtain that
\[
\|v\|_{L^2(T^\Gamma)}^2= \int_{C(T)} \big(dv(\bp(\bs,t))^2 + t v(\bp(\bs,t)) \nabla v(\bp(\bs,t)) {\cdot} (\bs - \hat \bx_{T})\big)~d(\hat\bx_{T},T_{\bs}\Gamma)^{-1}~\mbox{d}\bp.
\]
Since Assumption~\ref{ass:Gamma} implies that $C(T) \subset \Tdg_0:= B(\hat \bx_{T},\gamma^{-1}h_T)$, we conclude that
\[
\|v\|_{L^2(T^\Gamma)}^2 \le c_0 \bigg( h_T^{-1} \|v\|_{L^2(\Tdg_0)}^2 + \|v\|_{L^2(\Tdg_0)}\|\GRAD v\|_{L^2(\Tdg_0)} \bigg),
\]
where $c_0$ depends on the interface regularity parameter $\gamma\in (0,1)$. Let us now bound $\|v\|_{L^2(\dTi)}$. Proceeding as in \cite[Lemma~1.49]{DiPEr:12} using mesh regularity, we infer that there is a point $\check \bx_T\in T$ and positive real numbers $c_1,\theta_1$ depending on the mesh-regularity parameter $\rho\in (0,1)$ so that
 \[
\|v\|_{L^2(\dT)}^2 \le c_1 \bigg( h_T^{-1} \|v\|_{L^2(\Tdg_1)}^2 + \|v\|_{L^2(\Tdg_1)}\|\GRAD v\|_{L^2(\Tdg_1)} \bigg),
\]  
with $\Tdg_1=B(\check \bx_T,\theta_1h_T)$. To conclude, we combine the two above bounds using that $\Tdg_0\cup \Tdg_1 = B(\hat \bx_{T},\gamma^{-1}h_T) \cup B(\check \bx_T,\theta_1h_T) \subset B(\check \bx_T,\theta_{\textup{mtr}}h_T) =: \Tdg$ with $h_T^{-1}\|\hat\bx_T-\check\bx_T\|_{\ell^2} + \max(\gamma^{-1},\theta_1) \le 1+\gamma^{-1} + \max(\gamma^{-1},\theta_1) =:\theta_{\textup{mtr}}$ (since $\|\hat\bx_T-\check\bx_T\|_{\ell^2}\le \|\hat\bx_T-\bs\|_{\ell^2}+\|\bs-\check\bx_T\|_{\ell^2}\le 1+\gamma^{-1}$ for all $\bs\in T^\Gamma$), and we set $c_{\textup{mtr}}=\max(c_0,c_1)^{\frac12}$.
\end{proof}

\begin{lemma}[Discrete trace inequality]\label{lem:disc_trace}
Let $l\in\polN$, $l\ge0$. 
There is $c_{\textup{dtr}}$, depending on the polynomial degree $l$, the mesh regularity parameter $\rho\in (0,1)$, and the cut parameter $\delta\in (0,1)$, such that, for all $T\in\calT_h^\Gamma$, all $i\in\{1,2\}$, and all $v\in \polP^l(T^i)$, 
\begin{equation} \label{eq:trace}
\|v\|_{L^2(\dT^i)} \le c_{\textup{dtr}}\, h_T^{-\frac12} \|v\|_{L^2(T^i)}.
\end{equation}
\end{lemma}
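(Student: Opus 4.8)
The plan is to reduce the claim, by a scaling argument, to a scale-free inequality on a reference configuration, and then to exploit the finite dimension of $\polP^l$ together with the geometric control provided by Assumption~\ref{ass:ball}. Fix $T\in\calT_h^\Gamma$ and $i\in\{1,2\}$, and set $\tilde\bx:=\tilde\bx_{T^i}$. Since $v\in\polP^l(T^i)$ is the restriction of a $d$-variate polynomial of degree at most $l$ defined on all of $\Real^d$, I would introduce the rescaled polynomial $\hat v(\hat\bx):=v(\tilde\bx+h_T\hat\bx)\in\polP^l$ and the rescaled sub-cell $\hat T^i:=h_T^{-1}(T^i-\tilde\bx)$. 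Because $\tilde\bx\in T^i\subset T$ with $\mathrm{diam}(T)=h_T$, every point of $T$ lies within distance $h_T$ of $\tilde\bx$, so $\hat T^i\subset \overline{B(0,1)}$; moreover Assumption~\ref{ass:ball} rescales to $B(0,\delta)\subset\hat T^i$. Tracking the powers of $h_T$ produced by the change of variables on the $(d-1)$-dimensional boundary and on the $d$-dimensional cell, the factor $h_T^{-\frac12}$ in \eqref{eq:trace} is absorbed exactly, so that \eqref{eq:trace} becomes equivalent to the scale-free bound $\|\hat v\|_{L^2(\partial\hat T^i)}\le \hat c\,\|\hat v\|_{L^2(\hat T^i)}$; it then suffices to produce $\hat c$ depending only on $l$, $\rho$, $\delta$ (and the fixed geometry of $\Gamma$).

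For the reference inequality I would first bound the boundary norm by an $L^\infty$-norm over the enclosing unit ball: since $\partial\hat T^i\subset\overline{\hat T^i}\subset\overline{B(0,1)}$,
\[
\|\hat v\|_{L^2(\partial\hat T^i)}^2\le |\partial\hat T^i|\,\|\hat v\|_{L^\infty(\overline{B(0,1)})}^2 .
\]
The space $\polP^l$ is finite-dimensional, so the two norms $\|\cdot\|_{L^\infty(\overline{B(0,1)})}$ and $\|\cdot\|_{L^2(B(0,\delta))}$ are equivalent on it; the equivalence constant $C_{l,d,\delta}$ depends only on $l$, $d$, and $\delta$, and it degenerates as $\delta\to0$, which is precisely the mechanism by which unfavorable cuts are felt. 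Using $B(0,\delta)\subset\hat T^i$ then gives
\[
\|\hat v\|_{L^\infty(\overline{B(0,1)})}\le C_{l,d,\delta}\,\|\hat v\|_{L^2(B(0,\delta))}\le C_{l,d,\delta}\,\|\hat v\|_{L^2(\hat T^i)},
\]
which is the key place where Assumption~\ref{ass:ball} enters.

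It remains to bound the surface measure $|\partial\hat T^i|=|(\partial\hat T)^i|+|\hat T^\Gamma|$, and this is the main technical point. The flat part $(\partial\hat T)^i$ is contained in $\partial\hat T$, whose $(d-1)$-measure is controlled by the shape-regularity parameter $\rho$ (via the matching simplicial sub-mesh, the area of $\partial T$ scales like $h_T^{d-1}$). For the curved part $\hat T^\Gamma=h_T^{-1}(T^\Gamma-\tilde\bx)$ I would invoke the $C^2$-regularity of $\Gamma$ together with the standing assumption $h\le\ell_0$: since $T^\Gamma$ is a single connected piece of $\Gamma$ contained in a set of diameter $h_T$ and $h_T$ is below the reach of $\Gamma$, this piece is a Lipschitz graph over its tangent plane with uniformly bounded slope, whence $|T^\Gamma|\le C\,h_T^{d-1}$ and $|\hat T^\Gamma|\le C$ with $C$ depending only on the fixed geometry of $\Gamma$ and on $d$. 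Collecting the three displays and undoing the scaling yields \eqref{eq:trace} with $c_{\textup{dtr}}$ depending only on $l$, $\rho$, and $\delta$ (and the fixed data $\Gamma$). I expect the geometric control of the interface area to be the main obstacle; the norm-equivalence step, while standard, is where the essential $\delta$-dependence is produced.
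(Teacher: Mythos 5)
Your proof is correct and follows essentially the same route as the paper's: both reduce to the inverse inequality $\|\hat v\|_{L^\infty(B(\bzero,1))}\le \hat c\,\|\hat v\|_{L^2(B(\bzero,\delta))}$ on $\polP^l$ via the affine map sending $B(\tilde\bx_{T^i},h_T)$ to the unit ball, and then bound $|\partial T^i|\lesssim h_T^{d-1}$. Your treatment of the surface-measure bound is in fact slightly more careful than the paper's, which simply asserts $|\partial T^i|\le c h_T^{d-1}$ with $c$ depending on $\rho$ and leaves the contribution of $T^\Gamma$ implicit.
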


\begin{proof}
Let $T\in\calT_h^\Gamma$. Let $i\in\{1,2\}$, and let $v\in \polP^l(T^i)$. Since $\partial T^i \subset B(\tilde\bx_{T^i},h_T)$ and $B(\tilde\bx_{T^i},\delta h_T) \subset T^i$ owing to~\eqref{eq:boule_in_Ti}, we observe that
\begin{align*}
\|v\|_{L^2(\partial T^i)} &\le |\partial T^i|^{\frac12} \|v\|_{L^\infty(\partial T^i)}\le |\partial T^i|^{\frac12} \|v\|_{L^\infty(B(\tilde\bx_{T^i},h_T))} \\
&\le \hat c \, |\partial T^i|^{\frac12} |B(\tilde\bx_{T^i},\delta h_T)|^{-\frac12} \|v\|_{L^2(B(\tilde\bx_{T^i},\delta h_T))} \\
&\le \hat c' \, |\partial T^i|^{\frac12} h_T^{-\frac{d}{2}} \|v\|_{L^2(B(\tilde\bx_{T^i},\delta h_T))}
\le \hat c' \, |\partial T^i|^{\frac12} h_T^{-\frac{d}{2}} \|v\|_{L^2(T^i)},
\end{align*}
where the factor $\hat c$ results from the inverse inequality $\|\hat v\|_{L^\infty(B(\bzero,1))} \le \hat c \, \|\hat v\|_{L^2(B(\bzero,\delta))}$ for all $\hat v \in \polP^l(B(\bzero,1))$ and the pullback using the bijective affine map from $B(\tilde\bx_{T^i},h_T)$ to $B(\bzero,1)$. We conclude by observing that $|\partial T^i|\le ch_T^{d-1}$ (with $c$ depending on $\rho$).
\end{proof}

\begin{remark}[Lemma~\ref{lem:disc_trace}]
For conforming finite elements on unfitted meshes, the discrete
trace inequality~\eqref{eq:trace} is invoked only on $T^\Gamma$.
Here, this inequality needs also to be invoked on $\dTi$ since the 
HHO method involves unknowns attached to the mesh faces, see
the proofs of Lemma~\ref{lem:stab_aT} and of Lemma~\ref{lem:consist} below.
\end{remark}

\section{The unfitted HHO method}\label{sec:HHO}

In this section, we describe the unfitted HHO method for the interface problem. Let $k\ge0$ be the polynomial degree.

\subsection{Uncut cells}

Let $\calT_h\uncut:=\calT_h^1 \cup \calT_h^2$ be the collection of the uncut cells. Let $T\in \calT_h\uncut$ and set $\kappa_T=\kappa^i$ if $T\in\calT_h^i$, $\iot$. We define the following local bilinear form for all $v,w\in H^1(T)$:
\begin{equation}
a_T(v,w) = \int_T \kappa_T \GRAD v\SCAL\GRAD w.
\end{equation}
The classical HHO method is defined locally on each uncut cell $T\in \calT_h\uncut$ from a pair of local unknowns which consist of one polynomial of order $(k+1)$ in $T$ and a piecewise polynomial of order $k$ on $\partial T$ (that is, one polynomial of order $k$ on each face $F\in\calF_{\dT}$). The local unknowns are generically denoted 
\begin{equation} \label{eq:def_XT_uncut}
\hat v_T=(v_T,v_{\partial T})\in \polP^{k+1}(T) \times \polP^k(\calF_{\dT}) =: \hat \calX_T\uncut,
\end{equation} 
with the piecewise polynomial space $\polP^k(\calF_{\dT}) = \vartimes_{F\in\calF_{\dT}} \polP^k(F)$. The placement of the discrete unknowns for the uncut cells is illustrated in Figure~\ref{fig:uncut}.

\begin{figure}[htb]
\begin{center}
\includegraphics[width=0.6\linewidth]{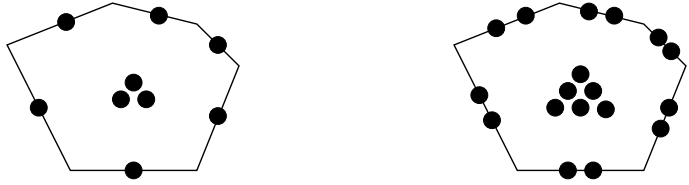}
\end{center}
\caption{Uncut hexagonal cell. Left: $k=0$; Right: $k=1$. Each dot attached to a geometric entity (face or cell) symbolizes one degree of freedom (not necessarily a pointwise evaluation).}
\label{fig:uncut}
\end{figure}

There are two key ingredients to devise the local HHO bilinear form. The first one is a reconstruction operator. Let $\hat v_T=(v_T,v_{\partial T})\in \hat \calX_T\uncut$. Then, we reconstruct a polynomial $r_T^{k+1}(\hat v_T) \in \polP^{k+1}(T)$ by requiring that, for all $z\in \polP^{k+1}(T)$, the following holds true:
\begin{equation} \label{eq:def_rT}
a_T(r_T^{k+1}(\hat v_T),z) = a_T(v_T,z) - \int_{\partial T} \kappa_T\GRAD z\SCAL\bn_T(v_T-v_{\partial T}),
\end{equation}
where $\bn_T$ is the unit outward-pointing normal to $T$. It is readily seen that $r_T^{k+1}(\hat v_T)$ is uniquely defined by~\eqref{eq:def_rT} up to an additive constant; one way to fix the constant is to prescribe $\int_T r_T^{k+1}(\hat v_T) = \int_T v_T$ (this choice is irrelevant in what follows). The second ingredient is the stabilization bilinear form defined so that, for all $\hat v_T,\hat w_T \in \hat \calX_T\uncut$,
\begin{equation} \label{eq:def_sT}
s_T(\hat v_T,\hat w_T) = \kappa_Th_T^{-1} \int_{\partial T} \Pi^k_{\partial T}(v_T-v_{\partial T})(w_T-w_{\partial T}),
\end{equation} 
where $\Pi^k_{\partial T}$ denotes the $L^2$-orthogonal projector onto the piecewise polynomial space $\polP^k(\calF_{\dT})$. Finally, the local HHO bilinear and linear forms to be used when assembling the global discrete problem (see Section~\ref{sec:global_disc}) are as follows: For all $\hat v_T,\hat w_T\in \hat \calX_T\uncut$,
\begin{subequations} \label{eq:def_al_uncut}
\begin{align}
\hat a_T\uncut(\hat v_T,\hat w_T) = {}& a_T(r_T^{k+1}(\hat v_T),r_T^{k+1}(\hat w_T)) + s_T(\hat v_T,\hat w_T), \\
\hat \ell_T\uncut(\hat w_T) = {}& \int_{T} fw_T.
\end{align}
\end{subequations}

\begin{remark}[Cell unknowns]
In the classical HHO method, there is some flexibility in the choice of the cell unknowns since one can take them to be polynomials of order $l\in\{k-1,k,k+1\}$. In the present context, we will need to work with polynomials of order $(k+1)$ in the cut cells \cor{to achieve optimal approximation properties (see Section~\ref{sec:approx})}; for simplicity, we consider polynomials of order $(k+1)$ in the uncut cells as well. Taking polynomials of order $l\in\{k-1,k\}$ in the uncut cells leads to slightly smaller matrices to be inverted when computing the reconstruction operator from~\eqref{eq:def_rT}, but requires a somewhat more involved design of the stabilization operator than in~\eqref{eq:def_sT} (see \cite{DiPEL:14,DiPEr:15}).
\end{remark}

\subsection{Cut cells}

Let $T\in\calT_h^\Gamma$. We use capital letters to denote a generic pair $V=(v^1,v^2) \in H^1(T^1)\times H^1(T^2)$. We define the following Nitsche-mortaring bilinear form for all $V,W\in H^{s}(T^1)\times H^1(T^2)$, $s>\frac32$:
\begin{subequations} \label{eq:nT} \begin{align}
n_T(V,W) &= \sum_{\iot} \int_{T^i} \kappa^i\GRAD v^i\SCAL\GRAD w^i + n_{T\Gamma}(V,W), \label{eq:def_nT} \\
n_{T\Gamma}(V,W) &= - \int_{T^\Gamma} (\kappa\GRAD v)^1\SCAL\bn_\Gamma\jump{W}_\Gamma + (\kappa\GRAD w)^1\SCAL\bn_\Gamma\jump{V}_\Gamma - \eta \frac{\kappa^1}{h_T} \jump{V}_\Gamma\jump{W}_\Gamma, \label{eq:def_nTG}
\end{align} \end{subequations}
where the user-specified parameter $\eta$ is such that $\eta \ge 4 c_{\textup{dtr}}^2$ where $c_{\textup{dtr}}$ results from the discrete trace inequality~\eqref{eq:trace} with polynomial degree $l=k$. Note also that the jump-penalty term is weighted by the lowest value of the diffusion coefficient. 

We consider a quadruple of discrete HHO unknowns, $\hat V_T = (V_T,V_{\dT})$, where both $V_T$ and $V_{\dT}$ are pairs associated with the partition $\overline\Omega = \overline{\Omega^1} \cup \overline{\Omega^2}$, so that 
\begin{equation}
V_T=(v_{T^1},v_{T^2}) \in \polP^{k+1}(T^1) \times \polP^{k+1}(T^2),
\end{equation}
and
\begin{equation}
V_{\dT}=(v_{(\dT)^1},v_{(\dT)^2}) \in \polP^{k}(\calF_{(\dT)^1}) \times \polP^{k}(\calF_{(\dT)^2}),
\end{equation}
where $\polP^k(\calF_{(\dT)^i}) := \vartimes_{F\in\calF_{(\dT)^i}} \polP^k(F)$ is the piecewise polynomial space of order $k$ on $(\partial T)^i$ based on the (sub-)faces in $\calF_{(\dT)^i}$. (Recall that, by definition, all the elements $F$ of $\calF_{(\dT)^i}$ are subsets of $\dTi=\partial T\cap \Omega^i$.) Note that we do not introduce any discrete unknown on $T^\Gamma$. We use the concise notation $\hat V_T\in \hat \calX_T\cut$ with
\begin{align} \label{eq:def_XT_int}
\hat \calX_T\cut &= \Big( \polP^{k+1}(T^1) \times \polP^{k+1}(T^2) \Big) \; \times \; \Big( \polP^{k}(\calF_{(\dT)^1}) \times \polP^{k}(\calF_{(\dT)^2}) \Big).
\end{align}
The placement of the discrete HHO unknowns in the cut cells for the interface problem is illustrated in Figure~\ref{fig:cutcell3}. 

\begin{figure}[htb]
\begin{center}
\includegraphics[width=0.6\linewidth]{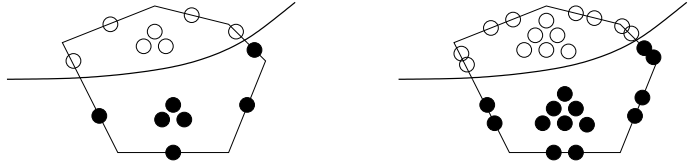}
\end{center}
\caption{Cut hexagonal cell for the interface problem. The subdomain $\Omega^1$ is located below $\Gamma$ \cor{with the corresponding HHO unknowns shown by filled circles}, and the subdomain $\Omega^2$ is located above $\Gamma$ \cor{with the corresponding HHO unknowns shown by empty circles}. Left: $k=0$; Right: $k=1$.}
\label{fig:cutcell3}
\end{figure}

As above, there are two key ingredients to devise the local HHO bilinear form: reconstruction and stabilization. Let $\hat V_T\in \hat\calX_T\cut$. We reconstruct a pair of polynomials $R_T^{k+1}(\hat V_T) \in \polP^{k+1}(T^1) \times \polP^{k+1}(T^2)$ by requiring that, for all $Z=(z^1,z^2)\in \polP^{k+1}(T^1) \times \polP^{k+1}(T^2)$, the following holds true:
\begin{align}
n_T(R_T^{k+1}(\hat V_T),Z) = {}& n_T(V_T,Z) - \sum_{\iot} \int_{(\partial T)^i} \kappa^i \GRAD z^i\SCAL \bn_T(v_{T^i}-v_{(\dT)^i}). \label{eq:def_RT}
\end{align}
It follows from Lemma~\ref{lem:stab_bnd_nT} below that $R_T^{k+1}(\hat V_T)$ is uniquely defined by~\eqref{eq:def_RT} up to the same additive constant for both of its components; one way to fix the constant is to prescribe $\sum_{\iot} \int_{T^i} (R_T^{k+1}(\hat V_T))^i = \sum_{\iot} \int_{T^i} v_{T^i}$ (this choice is irrelevant in what follows). Concerning stabilization, we set for all $\hat V_T,\hat W_T\in\hat \calX_T\cut$,
\begin{equation} \label{eq:def_sT_cut}
s_T(\hat V_T,\hat W_T) = \sum_{\iot} \kappa^i h_T^{-1} \int_{\dTi} \Pi_{\dTi}^k(v_{T^i}-v_{(\dT)^i})(w_{T^i}-w_{(\dT)^i}),
\end{equation}
where $\Pi_{(\partial T)^i}^k$ denotes the $L^2$-orthogonal projector onto the piecewise polynomial space $\polP^k(\calF_{(\dT)^i})$. Finally, the local HHO bilinear and linear forms are as follows: For all $\hat V_T,\hat W_T\in \hat\calX_T\cut$,
\begin{subequations} \label{eq:def_al_int}
\begin{align}
\hat a_T\cut(\hat V_T,\hat W_T) = {}& n_T(R_T^{k+1}(\hat V_T),R_T^{k+1}(\hat W_T)) + s_T(\hat V_T,\hat W_T), \\
\hat \ell_T\cut(\hat W_T) = {}& \sum_{\iot} \int_{T^i} fw_{T^i} + \int_{T^\Gamma} (g_{\textup{N}}w_{T^2} + g_{\textup{D}} \phi_T(W_T)), 
\end{align}
\end{subequations}
with $\phi_T(W_T) = -\kappa^1\GRAD w_{T^1}\SCAL\bn_\Gamma + \eta\kappa^1h_T^{-1}\jump{W_T}_\Gamma$ (the definition of the integral over $T^\Gamma$ follows from consistency arguments, see the proof of Lemma~\ref{lem:consist} below).

\subsection{The global discrete problem}
\label{sec:global_disc}

The mesh faces are collected in the set $\calFh$ which is partitioned into $\calFh = \calFh^1\cup \calFh^\Gamma\cup \calFh^2$, where $\calFh^i$, $\iot$, collect the mesh faces inside the subdomain $\Omega^i$ and $\calFh^\Gamma$ collects the mesh faces cut by the interface. 
We also define for all $\iot$,
\begin{subequations}
\begin{align}
\hat \calT_h^i &:= \calT_h^i \cup \{T^i=T\cap\Omega^i\tq T\in\calT_h^\Gamma\}, \\
\hcalFh^i &:= \calFh^i \cup \{F^i=F\cap\Omega^i\tq F\in\calFh^\Gamma\},
\end{align}
\end{subequations}
\ie $\hat \calT_h^i$ (resp., $\hcalFh^i$) is the collection of all the mesh cells (resp., faces) inside $\Omega^i$ plus the collection of the sub-cells (resp., sub-faces) of the cut cells (resp., cut faces) inside $\Omega^i$. Let us set
\begin{equation}
\hat \calX_h^i := \vartimes_{T\in \hat \calT_h^i} \polP^{k+1}(T) \; \times \; \vartimes_{F\in \hcalFh^i} \polP^{k}(F).
\end{equation}

The global discrete space is $\hat \calX_h:=\hat \calX_h^1\times \hat \calX_{h}^2$. Let $\calFh^\partial$ be the collection of the mesh faces located at the boundary $\partial\Omega$ (note that the faces in $\calFh^\partial$ are in one and only one of the subsets $\calFh^i$, but not in $\calFh^\Gamma$ since the interface $\Gamma$ is located in the interior of $\Omega$). We enforce the homogeneous Dirichlet condition on $\partial\Omega$ by zeroing out the discrete HHO unknowns attached to the mesh faces in $\calFh^\partial$. 
Let $i^\partial\in\{1,2\}$ be the index of the subdomain touching the boundary $\partial\Omega$. Let $\hat \calX_{h0}^{i^\partial}$ be the subspace of $\hat \calX_h^{i^\partial}$ composed of all the discrete HHO unknowns such that their component attached to a mesh face is zero if this face lies on the boundary $\partial \Omega$. 
If $i^\partial=1$, we set $\hat \calX_{h0}:=\hat \calX_{h0}^1\times \hat \calX_{h}^2$; otherwise, we set $\hat \calX_{h0}:=\hat \calX_h^1\times \hat \calX_{h0}^2$. 

Let $\hat V_h\in \hat \calX_{h0}$. For all $T\in\calT_h\uncut = \calT_h^1\cup \calT_h^2$, we denote $\hat v_T=(v_T,v_{(\dT)})\in\hat\calX_T\uncut$ (see~\eqref{eq:def_XT_uncut}) the components of $\hat V_h$ attached to the cell $T$. For all $T\in\calT_h^\Gamma$, we denote $\hat V_T=(V_T,V_{\dT})\in\hat\calX_T\cut$ (see~\eqref{eq:def_XT_int}) the components of $\hat V_h$ attached to the cell $T$. The discrete problem we want to solve reads as follows: Find $\hat U_h\in \hat \calX_{h0}$ s.t.
\begin{equation} \label{eq:discrete_pb}
\hat a_h(\hat U_h,\hat W_h) = \hat \ell_h(\hat W_h), \qquad \forall \hat W_h\in \hat \calX_{h0},
\end{equation}
with
\begin{subequations}
\begin{align}
\hat a_h(\hat V_h,\hat W_h) &= \sum_{T\in\calT_h\uncut} \hat a_T\uncut(\hat v_T,\hat w_T) + \sum_{T\in\calT_h^\Gamma} \hat a_T\cut(\hat V_T,\hat W_T), \\ 
\hat \ell_h(\hat W_h) &= \sum_{T\in\calT_h\uncut} \hat \ell_T\uncut(w_T) + \sum_{T\in\calT_h^\Gamma} \hat \ell_T\cut(\hat W_T),
\end{align}
\end{subequations}
where $\hat a_T\uncut(\SCAL,\SCAL)$ and $\hat \ell_T\uncut(\SCAL)$ are defined by~\eqref{eq:def_al_uncut} for all $T\in\calT_h\uncut$ and $\hat a_T\cut(\SCAL,\SCAL)$ and $\hat \ell_T\cut(\SCAL)$ are defined by~\eqref{eq:def_al_int} for all $T\in\calT_h^\Gamma$.

The discrete problem~\eqref{eq:discrete_pb} can be solved efficiently by eliminating locally all the cell unknowns using static condensation. This local elimination leads to a global transmission problem on the mesh skeleton involving only the face unknowns with a stencil that couples unknowns attached to neighboring faces (in the sense of cells). Once this global transmission problem is solved, the cell unknowns are recovered by local solves. We refer the reader, e.g., to \cite{CiDPE:17} for more details in the case of classical HHO methods.

\section{Analysis}
\label{sec:proof}

In this section we analyze the convergence of the unfitted HHO method for the interface problem. The proof consists in establishing stability, consistency, and boundedness properties for the discrete forms $\hat a_h$ and $\hat\ell_h$, and in devising a local approximation operator related to the local reconstruction operators $r_T^{k+1}$ (see~\eqref{eq:def_rT}) and $R_T^{k+1}$ (see~\eqref{eq:def_RT}). The mesh $\calT_h$ is assumed to satisfy Assumption~\ref{ass:Gamma} and Assumption~\ref{ass:ball} so as to invoke the trace inequalities from Lemma~\ref{lem:mult_tr} and~\ref{lem:disc_trace}.  

In what follows, we often abbreviate $A\lesssim B$ the inequality $A\le CB$ for positive real numbers $A$ and $B$, where the constant $C$ does not depend on $\kappa$ nor on the way the interface cuts the mesh-cells, but only depends on the polynomial degree $k\ge0$, the mesh regularity parameter $\rho\in(0,1)$, the interface regularity parameter $\gamma\in(0,1)$ from Assumption~\ref{ass:Gamma}, and the cut parameter $\delta\in (0,1)$ from Assumption~\ref{ass:ball}. 

\subsection{Stability and well-posedness}

We start with the following stability and boundedness results on the Nitsche-mortaring bilinear form $n_T$ defined by~\eqref{eq:nT} for all $T\in\calT_h^\Gamma$. We define the following stability semi-norm for all $V=(v^1,v^2) \in H^1(T^1)\times H^1(T^2)$:
\begin{equation} \label{eq:def_seminorm_nT}
|V|_{n_T}^2 := \sum_{\iot} \kappa^i \|\GRAD v^i\|_{T^i}^2 + \eta \frac{\kappa^1}{h_T} \|\jump{V}_\Gamma\|_{T^\Gamma}^2.
\end{equation} 
Recall our assumption on the penalty parameter $\eta\ge 4 c_{\textup{dtr}}^2$.

\begin{lemma}[Stability and boundedness of $n_T$] \label{lem:stab_bnd_nT}
Let $T\in\calT_h^\Gamma$. The following holds true for all $V \in \polP^{k+1}(T^1)\times \polP^{k+1}(T^2)$:
\begin{equation} \label{eq:nT_coer}
n_T(V,V) \ge \frac12 |V|_{n_T}^2.
\end{equation}
Moreover, the following holds true for all $V,W \in \polP^{k+1}(T^1)\times \polP^{k+1}(T^2)$:
\begin{equation} \label{eq:nT_bnd_disc}
|n_T(V,W)| \lesssim |V|_{n_T} |W|_{n_T},
\end{equation}
and for all $V \in H^s(T^1)\times H^1(T^2)$, $s>\frac32$, and all $W\in \polP^{k+1}(T^1) \times \polP^{k+1}(T^2)$:
\begin{equation} \label{eq:nT_bnd}
|n_T(V,W)| \lesssim |V|_{n_T\sharp}|W|_{n_T}, \qquad
|V|_{n_T\sharp}^2 := |V|_{n_T}^2  + \kappa^1 h_T \|\GRAD v^1\|_{T^\Gamma}^2.
\end{equation}
\end{lemma}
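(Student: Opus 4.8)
The plan is to prove the three estimates in turn, exploiting the structure of $n_T$ as the sum of the coercive volume term $\sum_i \kappa^i\|\GRAD v^i\|_{T^i}^2$ and the interface term $n_{T\Gamma}$. The only genuinely delicate quantity is the consistency flux $(\kappa\GRAD v)^1\SCAL\bn_\Gamma$ appearing in $n_{T\Gamma}$; everything else follows from Cauchy--Schwarz, Young's inequality, and the discrete trace inequality of Lemma~\ref{lem:disc_trace}.

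\textbf{Coercivity \eqref{eq:nT_coer}.} First I would compute $n_T(V,V)$ directly from \eqref{eq:def_nT}--\eqref{eq:def_nTG}: the volume term contributes $\sum_i \kappa^i\|\GRAD v^i\|_{T^i}^2$, the symmetric consistency terms combine into $-2\int_{T^\Gamma}(\kappa\GRAD v)^1\SCAL\bn_\Gamma\jump{V}_\Gamma$, and the penalty term gives $+\eta\frac{\kappa^1}{h_T}\|\jump{V}_\Gamma\|_{T^\Gamma}^2$. The cross term is controlled by Cauchy--Schwarz followed by Young's inequality with a parameter $\epsilon$: for any $\epsilon>0$,
\begin{equation*}
2\Big|\int_{T^\Gamma}(\kappa\GRAD v)^1\SCAL\bn_\Gamma\jump{V}_\Gamma\Big|
\le \epsilon\, \kappa^1 h_T\|\GRAD v^1\SCAL\bn_\Gamma\|_{T^\Gamma}^2
+ \frac{1}{\epsilon}\,\frac{\kappa^1}{h_T}\|\jump{V}_\Gamma\|_{T^\Gamma}^2.
\end{equation*}
Here is the crux: since $v^1\in\polP^{k+1}(T^1)$, I apply the discrete trace inequality \eqref{eq:trace} with $l=k$ to each Cartesian component of $\GRAD v^1\in\polP^k(T^1)^d$, restricted to $T^\Gamma\subset\partial T^1$, to obtain $h_T\|\GRAD v^1\|_{T^\Gamma}^2 \le c_{\textup{dtr}}^2\|\GRAD v^1\|_{T^1}^2$. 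Choosing $\epsilon = 2c_{\textup{dtr}}^2 / (\text{suitable factor})$ so that the gradient contribution is absorbed by a fraction of $\kappa^1\|\GRAD v^1\|_{T^1}^2$, and using the hypothesis $\eta\ge 4c_{\textup{dtr}}^2$ to dominate the remaining $\frac{1}{\epsilon}\frac{\kappa^1}{h_T}\|\jump{V}_\Gamma\|^2$ by half the penalty term, yields $n_T(V,V)\ge \frac12|V|_{n_T}^2$. This is where the precise constant $4c_{\textup{dtr}}^2$ is needed, and it is the main point requiring care. Note that only $\kappa^1$ (the smaller coefficient) enters, which is exactly why the penalty is weighted by $\kappa^1$ and why robustness in the contrast is retained.

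\textbf{Discrete boundedness \eqref{eq:nT_bnd_disc} and the general bound \eqref{eq:nT_bnd}.} For \eqref{eq:nT_bnd_disc}, I bound each of the three terms of $n_{T\Gamma}(V,W)$ by Cauchy--Schwarz; the two consistency terms are handled symmetrically, estimating e.g. $\int_{T^\Gamma}(\kappa\GRAD v)^1\SCAL\bn_\Gamma\jump{W}_\Gamma$ by $(\kappa^1 h_T\|\GRAD v^1\|_{T^\Gamma}^2)^{1/2}(\frac{\kappa^1}{h_T}\|\jump{W}_\Gamma\|_{T^\Gamma}^2)^{1/2}$ and then invoking \eqref{eq:trace} on $\GRAD v^1$ to replace the boundary gradient norm by the volume norm $\kappa^1\|\GRAD v^1\|_{T^1}^2\le|V|_{n_T}^2$; the penalty term is immediate. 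This gives \eqref{eq:nT_bnd_disc}. For \eqref{eq:nT_bnd}, where $V$ is no longer polynomial (only $V\in H^s(T^1)\times H^1(T^2)$ with $s>\tfrac32$, so the trace of $\GRAD v^1$ on $T^\Gamma$ is well defined), the discrete trace inequality is unavailable for $v^1$; instead I simply carry the term $\kappa^1 h_T\|\GRAD v^1\|_{T^\Gamma}^2$ explicitly, which is precisely the extra contribution defining the augmented seminorm $|V|_{n_T\sharp}$. For $W$ I still use \eqref{eq:trace}. Thus the asymmetry between the two arguments is exactly what forces the sharper norm on the first slot, and recognizing this is the conceptual heart of \eqref{eq:nT_bnd}; the computation itself is routine Cauchy--Schwarz.
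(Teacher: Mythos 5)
Your proposal is correct and follows essentially the same route as the paper: the paper also isolates the cross term $-2\int_{T^\Gamma}(\kappa\GRAD v)^1\SCAL\bn_\Gamma\jump{V}_\Gamma$, controls it via Cauchy--Schwarz and the discrete trace inequality~\eqref{eq:trace} on $T^\Gamma\subset\partial T^1$ (writing the result as $2c_{\textup{dtr}}\eta^{-1/2}\xi\zeta$ with $\xi,\zeta$ the two pieces of $|V|_{n_T}$, which is your Young's-inequality step in disguise), and obtains the boundedness estimates by applying~\eqref{eq:trace} to $\GRAD w^1$ always and to $\GRAD v^1$ only in the polynomial case, keeping $\kappa^1h_T\|\GRAD v^1\|_{T^\Gamma}^2$ inside $|V|_{n_T\sharp}$ otherwise. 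The only blemish is the parenthetical ``$\epsilon=2c_{\textup{dtr}}^2/(\text{suitable factor})$'', which should read $\epsilon=1/(2c_{\textup{dtr}}^2)$ so that $\epsilon c_{\textup{dtr}}^2\le\frac12$ and $1/(\epsilon\eta)=2c_{\textup{dtr}}^2/\eta\le\frac12$; this is a typo, not a gap.
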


\begin{proof}
The proof is classical; we sketch it for completeness. Let $V \in \polP^{k+1}(T^1)\times \polP^{k+1}(T^2)$, and let us set $\xi=(\sum_{\iot} \kappa^i \|\GRAD v^i\|_{T^i}^2)^{\frac12}$ and $\zeta = (\eta \frac{\kappa^1}{h_T} \|\jump{V}_\Gamma\|_{T^\Gamma}^2)^{\frac12}$ so that $|V|_{n_T}^2 = \xi^2 + \zeta^2$. The definition~\eqref{eq:nT} of $n_T$ followed by the Cauchy--Schwarz inequality and the discrete trace inequality~\eqref{eq:trace} (applied on $T^\Gamma$ with $l=k$) yields
\[
n_T(V,V) = \xi^2 -2\int_{T^\Gamma} (\kappa\GRAD v)^1\SCAL\bn_\Gamma\jump{V}_\Gamma + \zeta^2 \ge \xi^2 -2c_{\textup{dtr}}\eta^{-\frac12}\xi\zeta + \zeta^2,
\]
so that $n_T(V,V)\ge\frac12(\xi^2+\zeta^2)$ (i.e., \eqref{eq:nT_coer}) follows from the assumption that $\eta\ge 4 c_{\textup{dtr}}^2$. Moreover, using the Cauchy--Schwarz inequality, we infer that
\[
|n_T(V,W)| \le |V|_{n_T}|W|_{n_T} + \kappa^1\|\GRAD v^1\|_{T^\Gamma}\|\jump{W}_\Gamma\|_{T^\Gamma} + \kappa^1\|\GRAD w^1\|_{T^\Gamma}\|\jump{V}_\Gamma\|_{T^\Gamma},
\]
so that \eqref{eq:nT_bnd_disc} and~\eqref{eq:nT_bnd} follow from the discrete trace inequality~\eqref{eq:trace}.
\end{proof}

We can now address the stability of the local HHO bilinear forms $\hat a_T\uncut$ and $\hat a_T\cut$.
For all $T\in\calT_h\uncut$, we consider the local semi-norm used in the analysis of classical HHO methods: For all $\hat v_T=(v_T,v_{\dT})\in\hat\calX_T\uncut$,
\begin{align}
|\hat v_T|_{\hat a_T}^2 :={}& \kappa_T \|\GRAD v_T\|_{T}^2 + \kappa_Th_T^{-1}\|\Pi_{\dT}^k(v_{T}-v_{\dT})\|_{\dT}^2 = |v_T|_{a_T}^2 + s_T(\hat v_T,\hat v_T), \label{eq:def_seminorm_aT_uncut}
\end{align}
where we have set $|v_T|_{a_T}^2 := \kappa_T \|\GRAD v_T\|_{T}^2$.
For all $T\in\calT_h^\Gamma$, we define the following local semi-norm: For all $\hat V_T=(V_T,V_{\dT}) = ((v_{T^1},v_{T^2}),(v_{(\dT)^1},v_{(\dT)^2}))\in \hat\calX_T\cut$:
\begin{align} 
|\hat V_T|_{\hat a_T}^2 := {}&\sum_{\iot} \kappa^i \|\GRAD v_{T^i}\|_{T^i}^2 + \eta \frac{\kappa^1}{h_T} \|\jump{V_T}_\Gamma\|_{T^\Gamma}^2 \nonumber \\&+ \sum_{\iot} \kappa^ih_T^{-1}\|\Pi_{\dTi}^k(v_{T^i}-v_{(\dT)^i})\|_{\dTi}^2 
= |V_T|_{n_T}^2 + s_T(\hat V_T,\hat V_T). \label{eq:def_seminorm_aT}
\end{align}

\begin{lemma}[Stability] \label{lem:stab_aT}
The following holds true: 
\begin{subequations} \begin{alignat}{3} \label{eq:stab_uncut}
\hat a_T\uncut(\hat v_T,\hat v_T) &\gtrsim |\hat v_T|_{\hat a_T}^2,
&\qquad &\forall T\in\calT_h\uncut, &\; &\forall \hat v_T\in \hat \calX_T\uncut, \\
\label{eq:stab_cut}
\hat a_T\cut(\hat V_T,\hat V_T) &\gtrsim |\hat V_T|_{\hat a_T}^2,
&\qquad &\forall T\in\calT_h^\Gamma, &\; &\forall \hat V_T \in \hat \calX_T\cut.
\end{alignat} \end{subequations}
\end{lemma}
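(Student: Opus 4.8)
The plan is to adapt the classical HHO stability argument, whose crux is to test the defining relation of the reconstruction operator against the cell unknown itself; this transfers control of the cell gradient to the reconstructed energy plus the stabilization, with the boundary coupling term absorbed through an $L^2$-projection and a discrete trace inequality. Since $s_T$ appears verbatim on both sides of the two claimed bounds, the real work is to dominate $|v_T|_{a_T}^2$ (resp.\ $|V_T|_{n_T}^2$) by the right-hand side of \eqref{eq:def_al_uncut} (resp.\ \eqref{eq:def_al_int}).

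For the uncut case \eqref{eq:stab_uncut}, I would set $r_T:=r_T^{k+1}(\hat v_T)$ and take $z=v_T$ in \eqref{eq:def_rT}, which after rearrangement gives the identity
\[
|v_T|_{a_T}^2 = a_T(v_T,v_T) = a_T(r_T,v_T) + \int_{\partial T}\kappa_T\GRAD v_T\SCAL\bn_T\,(v_T-v_{\partial T}).
\]
The first term is bounded by $|r_T|_{a_T}\,|v_T|_{a_T}$ via Cauchy--Schwarz. For the boundary term, the key observation is that $\GRAD v_T\SCAL\bn_T$ is, face by face, a polynomial of degree $k$, so $(v_T-v_{\partial T})$ may be freely replaced by $\Pi^k_{\partial T}(v_T-v_{\partial T})$; Cauchy--Schwarz together with the (standard) discrete trace inequality on the shape-regular cell $T$ then bounds this term by $C\,|v_T|_{a_T}\,s_T(\hat v_T,\hat v_T)^{\frac12}$. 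Dividing by $|v_T|_{a_T}$ yields $|v_T|_{a_T}\lesssim |r_T|_{a_T}+s_T(\hat v_T,\hat v_T)^{\frac12}$, hence $|v_T|_{a_T}^2\lesssim a_T(r_T,r_T)+s_T(\hat v_T,\hat v_T)=\hat a_T\uncut(\hat v_T,\hat v_T)$, and adding the (trivially dominated) stabilization gives \eqref{eq:stab_uncut}.

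For the cut case \eqref{eq:stab_cut}, I would run the same argument with $n_T$ in place of $a_T$ and $R_T:=R_T^{k+1}(\hat V_T)$ in place of $r_T$. Testing \eqref{eq:def_RT} with $Z=V_T$ and invoking the coercivity \eqref{eq:nT_coer} gives
\[
\tfrac12|V_T|_{n_T}^2 \le n_T(V_T,V_T) = n_T(R_T,V_T) + \sum_{\iot}\int_{(\partial T)^i}\kappa^i\GRAD v_{T^i}\SCAL\bn_T\,(v_{T^i}-v_{(\dT)^i}).
\]
Here $n_T(R_T,V_T)$ is controlled by $|R_T|_{n_T}\,|V_T|_{n_T}$ through the discrete boundedness \eqref{eq:nT_bnd_disc}, while the sum of boundary terms is treated exactly as above: insert $\Pi^k_{\dTi}$, apply the discrete trace inequality \eqref{eq:trace} (with $l=k$) on $(\partial T)^i\subset\partial T^i$, and use Cauchy--Schwarz in the index $i$, bounding it by $C\,|V_T|_{n_T}\,s_T(\hat V_T,\hat V_T)^{\frac12}$. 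This yields $|V_T|_{n_T}\lesssim|R_T|_{n_T}+s_T(\hat V_T,\hat V_T)^{\frac12}$, and then $|V_T|_{n_T}^2\lesssim n_T(R_T,R_T)+s_T(\hat V_T,\hat V_T)$ after using $|R_T|_{n_T}^2\le 2\,n_T(R_T,R_T)$ from \eqref{eq:nT_coer} once more; this is \eqref{eq:stab_cut}.

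Rather than a genuine obstacle, the point to get right is that coercivity of $n_T$ already bakes the interface jump $\eta\frac{\kappa^1}{h_T}\|\jump{V_T}_\Gamma\|_{T^\Gamma}^2$ into $|V_T|_{n_T}^2$, so no separate estimate for the jump is needed, and the reconstruction coupling is absorbed directly through \eqref{eq:nT_coer}--\eqref{eq:nT_bnd_disc}; this is precisely why Lemma~\ref{lem:stab_bnd_nT} was established first. The only subtlety is that the boundary integrals in \eqref{eq:def_RT} run over $(\partial T)^i$, not over $\partial T^i=(\partial T)^i\cup T^\Gamma$, so the discrete trace inequality is invoked on the sub-faces away from $\Gamma$, which is exactly the usage flagged in the remark following Lemma~\ref{lem:disc_trace}.
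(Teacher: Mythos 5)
Your proposal is correct and follows essentially the same route as the paper: test \eqref{eq:def_RT} with $Z=V_T$, use the coercivity and discrete boundedness of $n_T$ from Lemma~\ref{lem:stab_bnd_nT}, insert the $L^2$-projection into the boundary term, and absorb it via the discrete trace inequality \eqref{eq:trace} on $(\partial T)^i\subset\partial T^i$ together with the definition of $s_T$ (the paper hides terms by Young's inequality where you divide by the semi-norm, an immaterial difference). The only variation is that for the uncut case \eqref{eq:stab_uncut} the paper simply cites \cite[Lemma 4]{DiPEL:14}, whereas you reproduce that standard argument explicitly.
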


\begin{proof}
The proof of~\eqref{eq:stab_uncut} follows from \cite[Lemma 4]{DiPEL:14}.
Let us now prove~\eqref{eq:stab_cut}. Let $T\in\calT_h^\Gamma$ and let
$\hat V_T \in \hat \calX_T\cut$. 
Taking $Z=V_T=(v_{T^1},v_{T^2})$ in the definition~\eqref{eq:def_RT} of the reconstruction operator and using the stability of $n_T$ from Lemma~\ref{lem:stab_bnd_nT}, we infer that
\begin{align*}
|V_T|_{n_T}^2 \lesssim {}& n_T(V_T,V_T) \\ = {}&n_T(R_T^{k+1}(\hat V_T),V_T)
+ \sum_{\iot} \int_{(\partial T)^i}  \kappa^i \GRAD v_{T^i}\SCAL \bn_T(v_{T^i}-v_{(\dT)^i}).
\end{align*}
The first term on the right-hand side is controlled using the boundedness property~\eqref{eq:nT_bnd_disc} of $n_T$ and Young's inequality to hide $|V_T|_{n_T}$ on the left-hand side. For the second term, we use the Cauchy--Schwarz inequality, the fact that $(\kappa^i \GRAD v_{T^i}\SCAL \bn_T)_{|\dTi} \in \polP^k(\calF_{(\dT)^i})$, and the definition~\eqref{eq:def_sT_cut} of $s_T(\SCAL,\SCAL)$ to obtain
\[
\int_{(\partial T)^i}  \kappa^i \GRAD v_{T^i}\SCAL \bn_T(v_{T^i}-v_{(\dT)^i})
\le (\kappa^i)^{\frac12}h_T^{\frac12}\|\GRAD v_{T^i}\|_{\dTi} s_T(\hat V_T,\hat V_T)^{\frac12}.
\]
Then, we invoke the discrete trace inequality~\eqref{eq:trace} on $(\partial T)^i\subset \partial T^i$ for all $\iot$, and Young's inequality to hide $(\kappa^i)^{\frac12}\|\GRAD v_{T^i}\|_{T^i}$ on the left-hand side. Putting everything together, we infer that
\[
|V_T|_{n_T}^2 \lesssim |R_T^{k+1}(\hat V_T)|_{n_T}^2 + s_T(\hat V_T,\hat V_T),
\]
so that using \eqref{eq:def_seminorm_aT} and the stability of $n_T$ from Lemma~\ref{lem:stab_bnd_nT}, we conclude that
\begin{align*}
|\hat V_T|_{\hat a_T}^2 &= |V_T|_{n_T}^2 + s_T(\hat V_T,\hat V_T) \\
&\lesssim |R_T^{k+1}(\hat V_T)|_{n_T}^2 + s_T(\hat V_T,\hat V_T)\\
&\lesssim n_T(R_T^{k+1}(\hat V_T),R_T^{k+1}(\hat V_T)) + s_T(\hat V_T,\hat V_T)
= \hat a_T(\hat V_T,\hat V_T),
\end{align*}
which is the expected estimate.
\end{proof}

Summing the local semi-norms over the mesh cells, we define, for all $\hat V_h\in \hat \calX_h$,
\begin{equation}
|\hat V_h|_{\hat a_h}^2 := \sum_{T\in\calT_h\uncut} |\hat v_T|_{\hat a_T}^2 + \sum_{T\in\calT_h^\Gamma} |\hat V_T|_{\hat a_T}^2. \label{eq:def_norm_ah}
\end{equation}
Note that $|\cdot|_{\hat a_h}$ defines a norm on the subspace $\hat \calX_{h0}$. Indeed, assume that $|\hat V_h|_{\hat a_h}=0$ for some $\hat V_h\in \hat \calX_{h0}$. Then, for all $T\in \calT_h^\Gamma$, we have $|V_T|_{n_T}=0$ and $s_T(\hat V_T,\hat V_T)=0$. The nullity of the first term implies that $v_{T^1}$ and $v_{T^2}$ are constant functions that take the same value, and the nullity of the second term implies that $v_{(\dT)^1}$ and $v_{(\dT)^2}$ are also constant functions that take the same value as $v_{T^1}$ and $v_{T^2}$. Moreover, for all $T\in\calT_h\uncut$, $|\hat v_T|_{\hat a_T}=0$ implies that $v_T$ and $v_{\dT}$ take the same constant value. We can then propagate this constant value up to the boundary $\partial \Omega$ where the components of $\hat V_h$ attached to the boundary faces vanish. Thus, all the components of $\hat V_h$ are zero.

\begin{corollary}[Well-posedness]
The discrete problem~\eqref{eq:discrete_pb} is well-posed.
\end{corollary}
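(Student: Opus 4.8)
The plan is to reduce the well-posedness of the finite-dimensional linear problem \eqref{eq:discrete_pb} to the coercivity of $\hat a_h$ on $\hat\calX_{h0}$, which is essentially already in hand after Lemma~\ref{lem:stab_aT}. Since $\hat\calX_{h0}$ is finite-dimensional and \eqref{eq:discrete_pb} amounts to a square linear system, it suffices to prove that the associated operator is injective, i.e., that the only $\hat V_h\in\hat\calX_{h0}$ with $\hat a_h(\hat V_h,\hat W_h)=0$ for all $\hat W_h$ is $\hat V_h=0$; in fact it will be enough to control $\hat a_h(\hat V_h,\hat V_h)$.

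First I would sum the local stability bounds over the mesh cells: summing \eqref{eq:stab_uncut} over $T\in\calT_h\uncut$ and \eqref{eq:stab_cut} over $T\in\calT_h^\Gamma$, and recalling the definitions of $\hat a_h$ and of $|\cdot|_{\hat a_h}$ in \eqref{eq:def_norm_ah}, yields the global coercivity estimate
\[
\hat a_h(\hat V_h,\hat V_h)\gtrsim |\hat V_h|_{\hat a_h}^2, \qquad \forall \hat V_h\in\hat\calX_{h0},
\]
with a hidden constant independent of $\kappa$ and of the way $\Gamma$ cuts the mesh. Next I would invoke the argument given just before the statement, which shows that $|\hat V_h|_{\hat a_h}=0$ with $\hat V_h\in\hat\calX_{h0}$ forces $\hat V_h=0$ by propagating the common constant value of the cell and face components up to $\partial\Omega$, where the Dirichlet degrees of freedom vanish. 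Hence $|\cdot|_{\hat a_h}$ is a norm on $\hat\calX_{h0}$, and the displayed estimate shows that $\hat a_h$ is positive-definite there. Consequently $\hat a_h(\hat V_h,\hat V_h)=0$ with $\hat V_h\in\hat\calX_{h0}$ gives $\hat V_h=0$: the kernel of the system matrix is trivial, so the matrix is invertible and \eqref{eq:discrete_pb} has a unique solution (equivalently, one may cite the Lax--Milgram lemma, boundedness being automatic in finite dimension).

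I do not expect any genuine obstacle: the substance is carried entirely by Lemma~\ref{lem:stab_aT} and by the norm property established above. The only point meriting a word is that $\hat\ell_h$ is a well-defined linear functional, which is immediate since $f\in L^2(\Omega)$, $g_{\textup{N}}\in L^2(\Gamma)$ and $g_{\textup{D}}\in H^{\frac12}(\Gamma)$ render each integral in \eqref{eq:def_al_int} finite; no boundedness estimate for $\hat\ell_h$ is needed in finite dimension.
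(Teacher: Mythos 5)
Your proposal is correct and follows essentially the same route as the paper, which simply invokes the Lax--Milgram lemma on the strength of Lemma~\ref{lem:stab_aT} and the preceding observation that $|\cdot|_{\hat a_h}$ is a norm on $\hat\calX_{h0}$. You merely spell out the finite-dimensional reduction (coercivity $\Rightarrow$ injectivity $\Rightarrow$ invertibility) that the paper leaves implicit.
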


\begin{proof}
We apply the Lax--Milgram Lemma.
\end{proof}

\subsection{Approximation} \label{sec:approx}

Let $u$ be the exact solution with $u^i:=u_{|\Omega^i}$, for all $\iot$. 
We set $U\upex=(u^1,u^2)\in H^1(\Omega^1)\times H^1(\Omega^2)$.

\subsubsection{Uncut cells}

Let $T\in\calT_h\uncut$. We set $u\upex_T=u^i_{|T}$ where $\iot$ is s.t.~$T\in\calT_h^i$ and we consider the approximation of $u\upex_T$ in $T$ defined by
\begin{equation}
j^{k+1}_T(u\upex_T)=\Pi^{k+1}_T(u\upex_T),
\end{equation}
where $\Pi^{k+1}_T$ stands for the $L^2$-orthogonal projector onto $\polP^{k+1}(T)$ (we use a specific notation $j^{k+1}_T$ for similarity with cut cells, see below). We introduce the following local norm: For all $v\in H^s(T)$, $s>\frac32$:
\begin{equation}
\|v\|_{*T}^2 = \kappa_T\big(\|\GRAD v\|_{T}^2+h_T\|\GRAD v\|_{\dT}^2+h_T^{-1}\|v\|_{\dT}^2\big).
\end{equation}

\begin{lemma}[Approximation by $j^{k+1}_T$] \label{lem:app_j_uncut}
Assume $U\upex \in H^{k+2}(\Omega^1)\times H^{k+2}(\Omega^2)$.
The following holds true for all $T\in\calT_h\uncut$:
\begin{equation}
\|j^{k+1}_T(u\upex_T)-u\upex_T\|_{*T} \lesssim \kappa_T^{\frac12} h_T^{k+1} |u\upex_T|_{H^{k+2}(T)}.
\end{equation}
\end{lemma}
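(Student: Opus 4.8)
The plan is to bound each of the three contributions to $\|j^{k+1}_T(u\upex_T)-u\upex_T\|_{*T}$ separately by invoking the standard approximation properties of the $L^2$-orthogonal projector $\Pi^{k+1}_T$ on the shape-regular cell $T$. Writing $e:=\Pi^{k+1}_T(u\upex_T)-u\upex_T$ for the projection error, the norm $\|\cdot\|_{*T}$ expands (after dividing out $\kappa_T^{1/2}$) into the three terms $\|\GRAD e\|_T$, $h_T^{1/2}\|\GRAD e\|_{\dT}$, and $h_T^{-1/2}\|e\|_{\dT}$. The first and third are entirely classical: by the Bramble--Hilbert/Deny--Lions approximation estimate for $\Pi^{k+1}_T$ on a shape-regular cell one has $\|e\|_T\lesssim h_T^{k+2}|u\upex_T|_{H^{k+2}(T)}$ and $\|\GRAD e\|_T\lesssim h_T^{k+1}|u\upex_T|_{H^{k+2}(T)}$, so the third term, carrying the factor $h_T^{-1/2}$, also scales like $h_T^{k+1}|u\upex_T|_{H^{k+2}(T)}$ once the trace is accounted for.

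The key mechanism for all the boundary contributions is the multiplicative trace inequality on the \emph{uncut} cell $T$. Since $T\in\calT_h\uncut$ is an ordinary (polyhedral, shape-regular) cell, the classical multiplicative trace inequality $\|w\|_{\dT}^2\lesssim h_T^{-1}\|w\|_T^2+\|w\|_T\|\GRAD w\|_T$ (as in \cite[Lemma~1.49]{DiPEr:12}, already used in the proof of Lemma~\ref{lem:mult_tr}) applies directly, with no appeal to Assumption~\ref{ass:Gamma} needed here. I would apply this trace inequality twice: once to $w=e$ to control $h_T^{-1/2}\|e\|_{\dT}$, and once to $w=\GRAD e$ (componentwise) to control $h_T^{1/2}\|\GRAD e\|_{\dT}$. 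In each case the right-hand side produces a combination of $L^2(T)$-norms of $e$ (resp.\ $\GRAD e$) and its gradient, and each of these is then bounded by the polynomial approximation estimates for $\Pi^{k+1}_T$, giving the $H^{k+2}$-seminorm on the right with the correct power of $h_T$.

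Concretely, for the third term, the trace inequality gives $h_T^{-1/2}\|e\|_{\dT}\lesssim h_T^{-1}\|e\|_T + h_T^{-1/2}\|e\|_T^{1/2}\|\GRAD e\|_T^{1/2}$, and inserting $\|e\|_T\lesssim h_T^{k+2}|u\upex_T|_{H^{k+2}(T)}$ and $\|\GRAD e\|_T\lesssim h_T^{k+1}|u\upex_T|_{H^{k+2}(T)}$ yields the desired $h_T^{k+1}|u\upex_T|_{H^{k+2}(T)}$ bound. For the second term, applying the trace inequality to $\GRAD e$ gives $h_T^{1/2}\|\GRAD e\|_{\dT}\lesssim \|\GRAD e\|_T + h_T^{1/2}\|\GRAD e\|_T^{1/2}\|D^2 e\|_T^{1/2}$; here one uses $\|\GRAD e\|_T\lesssim h_T^{k+1}|u\upex_T|_{H^{k+2}(T)}$ together with the second-order estimate $\|D^2 e\|_T\lesssim h_T^{k}|u\upex_T|_{H^{k+2}(T)}$ (again Bramble--Hilbert for $\Pi^{k+1}_T$, valid since $k+2\ge 2$), and the product of the two half-powers reproduces $h_T^{k+1}|u\upex_T|_{H^{k+2}(T)}$.

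This lemma is genuinely routine, so there is no serious obstacle; the only point requiring a little care is the second-derivative term $\|D^2 e\|_T$ arising from the trace bound on $\GRAD e$, since one must confirm that the projection error of $\Pi^{k+1}_T$ is controlled in the $H^2$-seminorm by $h_T^k|u\upex_T|_{H^{k+2}(T)}$ — this holds because projecting a degree-$(k+1)$ polynomial exactly and a standard scaling argument give approximation of all derivatives up to order $k+1$, and $H^{k+2}$ regularity is exactly what is assumed. Assembling the three estimates and factoring back the $\kappa_T^{1/2}$ completes the proof.
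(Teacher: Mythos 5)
Your proof is correct and follows essentially the same route as the paper, which simply cites the classical approximation properties of the $L^2$-orthogonal projector (via a Poincar\'e--Steklov/Bramble--Hilbert argument on shape-regular polyhedral cells) combined with the multiplicative trace inequality on $\partial T$; you have merely written out the details, including the correct handling of the $\|D^2 e\|_T$ term needed for the $h_T^{1/2}\|\GRAD e\|_{\partial T}$ contribution.
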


\begin{proof}
The approximation properties of the $L^2$-orthogonal projector are classical on meshes where all the cells can be mapped to a reference cell, see, e.g., \cite{EG04}. On meshes with polyhedral cells which can be split into a finite number of shape-regular simplices, one can proceed as in the proof of \cite[Lem.~5.4]{ErnGuermond:17} by combining the Poincar\'e--Steklov inequality in each sub-simplex and the multiplicative trace inequality. 
\end{proof}

Let us now define
\begin{equation} \label{eq:def_p_T}
p^{k+1}_T(u\upex_T)=r^{k+1}_T(\hat \jmath^{k+1}_T(u\upex_T))\in \polP^{k+1}(T),
\end{equation}
where $r^{k+1}_T$ is the reconstruction operator defined by~\eqref{eq:def_rT} and
\begin{equation}
\hat \jmath^{k+1}_T(u\upex_T) = (j^{k+1}_T(u\upex_T),\Pi_{\dT}^k(u\upex_T))
= (\Pi^{k+1}_T(u\upex_T),\Pi_{\dT}^k(u\upex_T))\in\hat\calX_T\uncut,
\end{equation}
where $\Pi_{\dT}^k$ stands for the $L^2$-orthogonal projector onto the piecewise polynomial space $\polP^k(\calF_{\dT})$. 

\begin{lemma}[Approximation] \label{lem:approx_uncut}
Assume that $U\upex\in H^s(\Omega^1)\times H^s(\Omega^2)$, $s>\frac32$.
The following holds true for all $T\in\calT_h\uncut$:
\begin{equation}
|p^{k+1}_T(u\upex_T)-u\upex_T|_{a_T} + s_T(\hat \jmath^{k+1}_T(u\upex_T),\hat \jmath^{k+1}_T(u\upex_T))^{\frac12} \lesssim \|j^{k+1}_T(u\upex_T)-u\upex_T\|_{*T}.
\end{equation}
\end{lemma}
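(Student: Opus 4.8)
The plan is to show that, on an uncut cell, the reconstruction $p^{k+1}_T(u\upex_T)$ coincides, up to the additive constant fixed by the mean-value condition, with the local elliptic ($a_T$-orthogonal) projection of $u\upex_T$ onto $\polP^{k+1}(T)$, and then to conclude by standard best-approximation and projection-stability arguments. Throughout I abbreviate $v:=u\upex_T$ and write $\partial_n z:=\GRAD z\SCAL\bn_T$ for $z\in\polP^{k+1}(T)$.

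First I would establish the key identity
\[
a_T(p^{k+1}_T(v)-v,z)=0,\qquad \forall z\in\polP^{k+1}(T).
\]
Starting from the definition~\eqref{eq:def_rT} of $r^{k+1}_T$ applied to $\hat\jmath^{k+1}_T(v)=(\Pi^{k+1}_T v,\Pi^k_{\dT}v)$, I would integrate by parts the volume term $a_T(\Pi^{k+1}_T v,z)$ to turn it into $-\int_T\kappa_T\,\Pi^{k+1}_T v\,\Delta z+\int_{\dT}\kappa_T\,\Pi^{k+1}_T v\,\partial_n z$. Since $\Delta z\in\polP^{k-1}(T)\subset\polP^{k+1}(T)$, the defining orthogonality of $\Pi^{k+1}_T$ lets me replace $\Pi^{k+1}_T v$ by $v$ in the volume integral. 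Collecting the terms and subtracting $a_T(v,z)$ (also integrated by parts) leaves
\[
a_T(p^{k+1}_T(v)-v,z)=\int_{\dT}\kappa_T\,\partial_n z\,(\Pi^k_{\dT}v-v).
\]
The crucial observation is that, on each planar face $F\in\calF_{\dT}$, one has $\partial_n z_{|F}\in\polP^k(F)$ because $\GRAD z\in[\polP^k(T)]^d$; hence $\partial_n z\in\polP^k(\calF_{\dT})$, and the $L^2$-orthogonality of $\Pi^k_{\dT}$ makes the boundary integral vanish. This is the analogue, for cell unknowns of degree $(k+1)$, of the classical link between the HHO reconstruction and the elliptic projector, and the only point that needs care is that raising the cell degree does not break it — which works precisely because $\Delta z$ still has degree at most $k+1$.

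Granting the identity, the two terms are controlled separately. For the reconstruction error, $p^{k+1}_T(v)$ is the $a_T$-orthogonal projection of $v$ onto $\polP^{k+1}(T)$, hence the best approximation in the $|\SCAL|_{a_T}$ seminorm; taking the competitor $j^{k+1}_T(v)=\Pi^{k+1}_T v\in\polP^{k+1}(T)$ gives
\[
|p^{k+1}_T(v)-v|_{a_T}\le |j^{k+1}_T(v)-v|_{a_T}=\kappa_T^{\frac12}\|\GRAD(j^{k+1}_T(v)-v)\|_T,
\]
which is dominated by the first term of $\|j^{k+1}_T(v)-v\|_{*T}$ (the additive constant is harmless since $|\SCAL|_{a_T}$ annihilates constants). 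For the stabilization, I would use idempotency $\Pi^k_{\dT}\Pi^k_{\dT}v=\Pi^k_{\dT}v$ to write $\Pi^k_{\dT}(j^{k+1}_T(v)-\Pi^k_{\dT}v)=\Pi^k_{\dT}(j^{k+1}_T(v)-v)$, and then $L^2$-stability of $\Pi^k_{\dT}$ to get
\[
s_T(\hat\jmath^{k+1}_T(v),\hat\jmath^{k+1}_T(v))^{\frac12}=\kappa_T^{\frac12}h_T^{-\frac12}\|\Pi^k_{\dT}(j^{k+1}_T(v)-v)\|_{\dT}\le \kappa_T^{\frac12}h_T^{-\frac12}\|j^{k+1}_T(v)-v\|_{\dT},
\]
which is exactly the third term of $\|j^{k+1}_T(v)-v\|_{*T}$. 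Summing the two bounds yields the claim.

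The main obstacle is the first step: performing the integration by parts cleanly and checking that $\partial_n z\in\polP^k(\calF_{\dT})$ so that the boundary contribution drops against $\Pi^k_{\dT}$. Everything afterward is a routine use of best approximation and projection stability. I would also keep in mind the mild hypothesis $s>\tfrac32$, which guarantees that all the traces appearing in $\|\SCAL\|_{*T}$ and in the boundary integrals are well-defined.
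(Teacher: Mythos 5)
Your proof is correct and follows essentially the same route as the paper: identify $p^{k+1}_T(u\upex_T)$ with the elliptic projector to get the orthogonality and best-approximation bound (taking $j^{k+1}_T(u\upex_T)$ as competitor), then bound the stabilization term via idempotency and $L^2$-stability of $\Pi^k_{\dT}$, exactly as in the paper's proof of~\eqref{eq:bnd_sT_hatJ}. The only difference is that you prove the elliptic-projector identity inline by integration by parts (correctly noting that $\Delta z\in\polP^{k-1}(T)\subset\polP^{k+1}(T)$ and $\GRAD z\SCAL\bn_T\in\polP^k(\calF_{\dT})$ survive the degree-$(k+1)$ cell unknowns), whereas the paper simply cites \cite[Lemma 3]{DiPEL:14} for this fact.
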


\begin{proof}
It is shown in \cite[Lemma 3]{DiPEL:14} that $p^{k+1}_T(u\upex_T)$ is the elliptic projector of $u\upex_T$ onto $\polP^{k+1}(T)$, so that $a_T(p^{k+1}_T(u\upex_T)-u\upex_T,w)=0$ for all $w\in \polP^{k+1}(T)$, and $|p^{k+1}_T(u\upex_T)-u\upex_T|_{a_T}\le |j^{k+1}_T(u\upex_T)-u\upex_T|_{a_T}$. The bound on $|p^{k+1}_T(u\upex_T)-u\upex_T|_{a_T}$ then follows from 
$|\cdot|_{a_T}\le \|\cdot\|_{*T}$.
To bound $s_T(\hat \jmath^{k+1}_T(u\upex_T),\hat \jmath^{k+1}_T(u\upex_T))$, we proceed as in the proof of~\eqref{eq:bnd_sT_hatJ} below.
\end{proof}

\subsubsection{Cut cells}
For all $T\in\calT_h^\Gamma$, let us define the pair
\begin{equation} \label{eq:def_Uupex_T}
U\upex_T=(u^1_{|T^1},u^2_{|T^2}) \in H^1(T^1)\times H^1(T^2).
\end{equation} 
Let $E^i : H^1(\Omega^i) \to H^1(\Real^d)$, for all $\iot$, be stable extension operators. Recall the ball $\Tdg$ introduced in Lemma~\ref{lem:mult_tr} and observe that $T\subset \Tdg$ since $\theta_{\textup{mtr}}\ge1$. We construct an approximation of the pair $U\upex_T$ in $T$ by setting
\begin{equation} \label{eq:def_JT}
J_T^{k+1}(U\upex) := (\Pi_{\Tdg}^{k+1}(E^1(u^1))_{|T^1},\Pi_{\Tdg}^{k+1}(E^2(u^2))_{|T^2})
\in \polP^{k+1}(T^1) \times \polP^{k+1}(T^2),
\end{equation}
where $\Pi_{\Tdg}^{k+1}$ stands for the $L^2$-orthogonal projector onto $\polP^{k+1}(\Tdg)$ (we do not project using the set $T^i$, but the larger set $\Tdg$, to avoid dealing with approximation properties on $T^i$). We introduce the following local norm: For all $V=(v^1,v^2)\in H^s(T^1)\times H^s(T^2)$, $s>\frac32$,
\begin{align}
\|V\|_{*T}^2 = {}& \sum_{\iot} \kappa^i\big( \|\GRAD v^i\|_{T^i}^2 + h_T\|\GRAD v^i\|_{\dTi}^2 + h_T^{-1} \|v^i\|_{\dTi}^2 \big) \nonumber \\
& + \kappa^1 \big(h_T\|\GRAD v^1\|_{T^\Gamma}^2 + h_T^{-1}\|\jump{V}_\Gamma\|_{T^\Gamma}^2  \big) + \kappa^2h_T \|\GRAD v^2\|_{T^\Gamma}^2. \label{eq:def_cut_*T}
\end{align}
Note that $|V|_{n_T} \le |V|_{n_T\sharp} \le \|V\|_{*T}$.

\begin{lemma}[Approximation by $J^{k+1}_T$] \label{lem:app}
Assume $U\upex \in H^{k+2}(\Omega^1)\times H^{k+2}(\Omega^2)$.
The following holds true for all $T\in\calT_h^\Gamma$:
\begin{equation}
\|J_T^{k+1}(U\upex)-U\upex_T\|_{*T} \lesssim \sum_{\iot} (\kappa^i)^{\frac12} h_T^{k+1}|E^i(u^i)|_{H^{k+2}(\Tdg)}.
\end{equation}
\end{lemma}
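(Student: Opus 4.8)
The plan is to bound each of the six contributions to $\|J_T^{k+1}(U\upex)-U\upex_T\|_{*T}^2$ appearing in the definition~\eqref{eq:def_cut_*T} by reducing every term, including the interface jump term, to standard approximation estimates for the $L^2$-orthogonal projector $\Pi_{\Tdg}^{k+1}$ on the ball $\Tdg$. The key observation is that on $\Tdg$ we deal with genuine $(k+1)$-degree projection of a single $H^{k+2}$-function (the extension $E^i(u^i)$), so the full strength of polynomial approximation theory applies. First I would write, for each $i\in\{1,2\}$, $e^i := \Pi_{\Tdg}^{k+1}(E^i(u^i)) - E^i(u^i)$ on $\Tdg$ and record the classical bulk and gradient estimates $\|e^i\|_{L^2(\Tdg)} \lesssim h_T^{k+2}|E^i(u^i)|_{H^{k+2}(\Tdg)}$ and $\|\GRAD e^i\|_{L^2(\Tdg)} \lesssim h_T^{k+1}|E^i(u^i)|_{H^{k+2}(\Tdg)}$, together with the analogous estimates for the Hessian needed further down. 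Since $E^i(u^i)_{|T^i} = u^i_{|T^i}$ (the extension agrees with the true function on $\Omega^i \supset T^i$), the restriction of $e^i$ to $T^i$ equals $(J_T^{k+1}(U\upex))^i - (U\upex_T)^i$, so bounding the six norm-pieces amounts to bounding the appropriate norms of $e^i$ on $T^i$, $\dTi$ and $T^\Gamma$.

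The volume terms $\kappa^i\|\GRAD e^i\|_{T^i}^2$ are immediate from the gradient estimate since $\|\GRAD e^i\|_{T^i} \le \|\GRAD e^i\|_{\Tdg}$. For the face and interface traces I would invoke the multiplicative trace inequality of Lemma~\ref{lem:mult_tr}: applied to $e^i$ it gives, on $\partial T^i = (\partial T)^i \cup T^\Gamma$,
\begin{equation*}
\|e^i\|_{L^2(\partial T^i)} \lesssim h_T^{-\frac12}\|e^i\|_{L^2(\Tdg)} + \|e^i\|_{L^2(\Tdg)}^{\frac12}\|\GRAD e^i\|_{L^2(\Tdg)}^{\frac12},
\end{equation*}
and applied to the components of $\GRAD e^i$ it controls $\|\GRAD e^i\|_{L^2(\partial T^i)}$ in terms of $\|\GRAD e^i\|_{L^2(\Tdg)}$ and $\|\mathrm{D}^2 e^i\|_{L^2(\Tdg)}$. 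Feeding in the bulk estimates, the first application yields $\|e^i\|_{L^2(\partial T^i)} \lesssim h_T^{k+\frac32}|E^i(u^i)|_{H^{k+2}(\Tdg)}$, whence both $h_T^{-1}\|e^i\|_{\dTi}^2$ and $h_T^{-1}\|e^i\|_{T^\Gamma}^2$ are $\lesssim h_T^{2k+2}|E^i(u^i)|_{H^{k+2}(\Tdg)}^2$; the second yields $\|\GRAD e^i\|_{L^2(\partial T^i)} \lesssim h_T^{k+\frac12}|E^i(u^i)|_{H^{k+2}(\Tdg)}$, so the terms $h_T\|\GRAD e^i\|_{\dTi}^2$, $\kappa^1 h_T\|\GRAD e^1\|_{T^\Gamma}^2$ and $\kappa^2 h_T\|\GRAD e^2\|_{T^\Gamma}^2$ are likewise $\lesssim h_T^{2k+2}|E^i(u^i)|_{H^{k+2}(\Tdg)}^2$, carrying the correct $\kappa^i$ weight throughout. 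Note that here I need a mild bound on the second-order term $\|\mathrm{D}^2 e^i\|_{L^2(\Tdg)}$; this comes from the stability of $\Pi_{\Tdg}^{k+1}$ in the $H^1$-seminorm applied to $\GRAD E^i(u^i)$, or equivalently from the standard estimate on second derivatives of the projection error, and is routine on the shape-regular ball $\Tdg$.

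The one genuinely interface-specific piece is the jump term $\kappa^1 h_T^{-1}\|\jump{V}_\Gamma\|_{T^\Gamma}^2$ evaluated at $V = J_T^{k+1}(U\upex)-U\upex_T$. Here the crucial point is that the \emph{exact} jump vanishes in the relevant sense: since $E^1(u^1)$ and $E^2(u^2)$ are both globally defined $H^{k+2}$-functions and the error is $e^i = \Pi_{\Tdg}^{k+1}(E^i(u^i)) - E^i(u^i)$, one has on $T^\Gamma$
\begin{equation*}
\jump{V}_\Gamma = e^1_{|T^\Gamma} - e^2_{|T^\Gamma},
\end{equation*}
because the two exact traces $E^1(u^1)_{|T^\Gamma}$ and $E^2(u^2)_{|T^\Gamma}$ are continuous functions on $T^\Gamma$ whose difference is the data $g_{\textup{D}}$, and crucially this exact difference is also the difference of the two \emph{untruncated} quantities, so it cancels when subtracting $U\upex_T$. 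Hence $\|\jump{V}_\Gamma\|_{T^\Gamma} \le \|e^1\|_{T^\Gamma} + \|e^2\|_{T^\Gamma}$, and the trace estimate above already controls each summand, giving $\kappa^1 h_T^{-1}\|\jump{V}_\Gamma\|_{T^\Gamma}^2 \lesssim \sum_{\iot} \kappa^1 h_T^{2k+2}|E^i(u^i)|_{H^{k+2}(\Tdg)}^2$, which is absorbed into the stated right-hand side since $\kappa^1 \le \kappa^i$. Collecting the six bounds and taking square roots gives the claim. The main obstacle I anticipate is bookkeeping the diffusion weights $\kappa^i$ so that the $\kappa^1$-weighting on the jump and on the $\GRAD v^1$ interface term matches the right-hand side exactly, together with keeping the projection-error constants independent of how $\Gamma$ cuts $T$ — but this independence is precisely what projecting on the fixed shape-regular ball $\Tdg$ (rather than on $T^i$) is designed to guarantee.
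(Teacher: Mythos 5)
Your proposal is correct and follows essentially the same route as the paper: bound each contribution to $\|{\cdot}\|_{*T}$ by reducing to the classical approximation properties of $\Pi_{\Tdg}^{k+1}$ on the ball $\Tdg\supset T$ for the volume terms, and by combining these with the multiplicative trace inequality of Lemma~\ref{lem:mult_tr} (applied to $e^i$ and to the components of $\GRAD e^i$) for the terms on $(\partial T)^i$ and $T^\Gamma$. Your additional observations—that $\jump{V}_\Gamma=e^1-e^2$ on $T^\Gamma$ and that a second-derivative bound on the projection error is needed for the gradient traces—are correct details that the paper leaves implicit.
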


\begin{proof}
We need to bound the six terms on the right-hand side of~\eqref{eq:def_cut_*T}. The bound on the norm on $T^i$ is straightforward since this norm can be bounded by the norm on $\Tdg$ where we can use the classical approximation properties of $\Pi_{\Tdg}^{k+1}$ (recall that $T\subset \Tdg$). To bound the three norms on $(\dT)^i$ and the two norms on $T^\Gamma$, we use the multiplicative trace inequality from Lemma~\ref{lem:mult_tr} and the approximation properties of $\Pi_{\Tdg}^{k+1}$ on $\Tdg$.
\end{proof}

Let us now define
\begin{equation}
P_T^{k+1}(U\upex)=R_T^{k+1}(\hat J_T^{k+1}(U\upex)) \in \polP^{k+1}(T^1) \times \polP^{k+1}(T^2), \label{eq:def_AT}
\end{equation}
where $R^{k+1}_T$ is the reconstruction operator defined by~\eqref{eq:def_RT} and
\begin{equation}
\hat J_T^{k+1}(U\upex)
:=(J_T^{k+1}(U\upex),(\Pi_{(\partial T)^1}^k(u^1),\Pi_{(\partial T)^2}^k(u^2))) \in \hat\calX_T\cut, \label{eq:def_hat_JT}
\end{equation}
so that $\hat J_T^{k+1}(U\upex) = ((\Pi_{\Tdg}^{k+1}(E^1(u^1))_{|T^1},\Pi_{\Tdg}^{k+1}(E^2(u^2))_{|T^2}),(\Pi_{(\partial T)^1}^k(u^1),\Pi_{(\partial T)^2}^k(u^2)))$. 

\begin{lemma}[Approximation] \label{lem:prelim}
Assume that $U\upex\in H^s(\Omega^1)\times H^s(\Omega^2)$, $s>\frac32$.
The following holds true for all $T\in\calT_h^\Gamma$: \textup{(i)} 
For all $W_T \in \polP^{k+1}(T^1) \times \polP^{k+1}(T^2)$,
\begin{equation} \label{eq:bnd_sup_nT}
n_T(P_T^{k+1}(U\upex)-U\upex_T,W_T) \lesssim \|J_T^{k+1}(U\upex)-U\upex_T\|_{*T}  |W_T|_{n_T}.
\end{equation}
\textup{(ii)} We have
\begin{equation} \label{eq:bnd_Au-u_nT}
|P_T^{k+1}(U\upex)-U\upex_T|_{n_T} \lesssim \|J_T^{k+1}(U\upex)-U\upex_T\|_{*T}.
\end{equation}
\textup{(iii)} We have
\begin{equation} \label{eq:bnd_sT_hatJ}
s_T(\hat J_T^{k+1}(U\upex),\hat J_T^{k+1}(U\upex))^{\frac12} \lesssim \|J_T^{k+1}(U\upex)-U\upex_T\|_{*T}.
\end{equation}
\end{lemma}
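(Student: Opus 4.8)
The plan is to prove the three estimates \eqref{eq:bnd_sup_nT}, \eqref{eq:bnd_Au-u_nT}, and \eqref{eq:bnd_sT_hatJ} in sequence, with (i) being the principal estimate from which (ii) follows almost immediately. Throughout, I abbreviate $\zeta_T := J_T^{k+1}(U\upex)-U\upex_T$ so that the right-hand side of each estimate is $\|\zeta_T\|_{*T}$ (up to the hidden constant), recalling the norm \eqref{eq:def_cut_*T} and the useful ordering $|V|_{n_T} \le |V|_{n_T\sharp} \le \|V\|_{*T}$ noted just before Lemma~\ref{lem:app}.

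For part (i), the key observation is that the reconstruction $P_T^{k+1}(U\upex)=R_T^{k+1}(\hat J_T^{k+1}(U\upex))$ satisfies, by the defining relation \eqref{eq:def_RT} with $\hat V_T = \hat J_T^{k+1}(U\upex)$, the identity
\begin{equation*}
n_T(P_T^{k+1}(U\upex),Z) = n_T(J_T^{k+1}(U\upex),Z) - \sum_{\iot} \int_{(\partial T)^i} \kappa^i \GRAD z^i\SCAL \bn_T\big(J_T^{k+1}(U\upex)^i - \Pi_{(\partial T)^i}^k(u^i)\big),
\end{equation*}
for all $Z\in \polP^{k+1}(T^1)\times\polP^{k+1}(T^2)$. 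Subtracting $n_T(U\upex_T,Z)$ from both sides and taking $Z=W_T$, I would split the error into a volumetric/interface piece $n_T(\zeta_T,W_T)$ and a boundary piece involving $J_T^{k+1}(U\upex)^i - \Pi_{(\partial T)^i}^k(u^i)$ on each $(\partial T)^i$. The first piece is bounded directly by the boundedness estimate \eqref{eq:nT_bnd} of Lemma~\ref{lem:stab_bnd_nT}, giving $|n_T(\zeta_T,W_T)| \lesssim |\zeta_T|_{n_T\sharp}|W_T|_{n_T} \le \|\zeta_T\|_{*T}|W_T|_{n_T}$. For the boundary piece, the crucial algebraic step is to insert $u^i$ for free: since $\Pi_{(\partial T)^i}^k$ is the $L^2$-orthogonal projector onto $\polP^k(\calF_{(\dT)^i})$ and $(\kappa^i\GRAD z^i\SCAL\bn_T)_{|(\dT)^i}\in\polP^k(\calF_{(\dT)^i})$, I can replace $\Pi_{(\partial T)^i}^k(u^i)$ by $u^i$ in the integrand at no cost, turning $J_T^{k+1}(U\upex)^i - \Pi_{(\partial T)^i}^k(u^i)$ into $\zeta_T^i$ on $(\partial T)^i$ (modulo the orthogonality). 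Then Cauchy--Schwarz, the discrete trace inequality \eqref{eq:trace} applied to $\GRAD w^i$ on $(\partial T)^i$ to produce a factor $h_T^{-1/2}(\kappa^i)^{1/2}\|\GRAD w^i\|_{T^i}$, and the $h_T^{-1/2}\|\zeta_T^i\|_{(\dT)^i}$ term already present in $\|\zeta_T\|_{*T}$ close the estimate.

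Part (ii) follows from (i) by a standard duality/testing argument: I take $W_T = P_T^{k+1}(U\upex)-U\upex_T$ in \eqref{eq:bnd_sup_nT}, but this only directly controls $n_T$ evaluated on the error against itself rather than its seminorm, so instead I would argue that $|P_T^{k+1}(U\upex)-U\upex_T|_{n_T} = \sup_{W_T} n_T(\cdot,W_T)/|W_T|_{n_T}$ is not quite an identity because $U\upex_T$ is not a polynomial. The clean route is to write $P_T^{k+1}(U\upex)-U\upex_T = (P_T^{k+1}(U\upex)-J_T^{k+1}(U\upex)) + \zeta_T$; the second summand satisfies $|\zeta_T|_{n_T}\le\|\zeta_T\|_{*T}$ trivially, while the first is a polynomial pair whose $n_T$-seminorm is controlled via coercivity \eqref{eq:nT_coer} of Lemma~\ref{lem:stab_bnd_nT} by testing against itself and invoking \eqref{eq:bnd_sup_nT}, after again splitting off the difference $n_T(P_T^{k+1}(U\upex)-J_T^{k+1}(U\upex),W_T)=n_T(P_T^{k+1}(U\upex)-U\upex_T,W_T)+n_T(\zeta_T,W_T)$ and bounding the latter by \eqref{eq:nT_bnd}. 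Part (iii) is the most self-contained: using the definition \eqref{eq:def_sT_cut} of $s_T$, each summand is $\kappa^i h_T^{-1}\|\Pi_{\dTi}^k(J_T^{k+1}(U\upex)^i - \Pi_{(\partial T)^i}^k(u^i))\|_{\dTi}^2$; since $\Pi_{\dTi}^k$ is idempotent on $\polP^k(\calF_{(\dT)^i})$ and a contraction in $L^2(\dTi)$, and since $\Pi_{(\partial T)^i}^k(u^i)=\Pi_{\dTi}^k(u^i)$, this is bounded by $\kappa^i h_T^{-1}\|J_T^{k+1}(U\upex)^i - u^i\|_{\dTi}^2 = \kappa^i h_T^{-1}\|\zeta_T^i\|_{\dTi}^2 \le \|\zeta_T\|_{*T}^2$, the last term being exactly one of the six terms in \eqref{eq:def_cut_*T}.

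The main obstacle I anticipate is the bookkeeping in part (i) around the weights $\kappa^1$ versus $\kappa^2$ and the asymmetric role of the interface term in $n_{T\Gamma}$ \eqref{eq:def_nTG}: one must verify that every flux-type contribution (the consistency terms on $T^\Gamma$ carrying $(\kappa\GRAD v)^1$) is absorbed by a term actually present in $\|\zeta_T\|_{*T}$, in particular the extra $\kappa^1 h_T\|\GRAD v^1\|_{T^\Gamma}^2$ and $\kappa^2 h_T\|\GRAD v^2\|_{T^\Gamma}^2$ terms that distinguish $\|\cdot\|_{*T}$ from $|\cdot|_{n_T\sharp}$. This is precisely why the sharp seminorm $|\cdot|_{n_T\sharp}$ in \eqref{eq:nT_bnd} and the augmented norm \eqref{eq:def_cut_*T} were introduced, so the estimates should dovetail, but confirming the contrast-robustness of the hidden constant (no dependence on $\kappa^2/\kappa^1$) requires care in always weighting the jump penalty by $\kappa^1$, as built into the definitions.
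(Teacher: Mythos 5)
Your proposal is correct and follows essentially the same route as the paper: part (i) via the defining relation \eqref{eq:def_RT}, the orthogonality of $\Pi^k_{(\partial T)^i}$ against $\kappa^i\GRAD w_T^i\SCAL\bn_T\in\polP^k(\calF_{(\partial T)^i})$, the boundedness \eqref{eq:nT_bnd}, and the discrete trace inequality \eqref{eq:trace}; part (ii) via coercivity applied to the polynomial pair $P_T^{k+1}(U\upex)-J_T^{k+1}(U\upex)$ followed by a triangle inequality; part (iii) via the contraction property of the $L^2$-projector. The concern you raise about contrast-robustness is already resolved by the definitions of $|\cdot|_{n_T\sharp}$ and $\|\cdot\|_{*T}$, exactly as you suspect.
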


\begin{proof}
Let us first prove~\eqref{eq:bnd_sup_nT}. Let $W_T \in \polP^{k+1}(T^1) \times \polP^{k+1}(T^2)$. Using the definition~\eqref{eq:def_RT} of the reconstruction operator, we infer that
\begin{align*}
n_T(P_T^{k+1}(U\upex),W_T) ={}& n_T(R_T^{k+1}(\hat J_T^{k+1}(U\upex)),W_T) \\
={}& n_T(J_T^{k+1}(U\upex),W_T) \\&- \sum_{\iot} \int_{\dTi} \kappa^i\GRAD w_T^i\SCAL\bn_T((J_T^{k+1}(U\upex))^i-\Pi^k_{\dTi}(u^i)) \\
={}& n_T(J_T^{k+1}(U\upex),W_T) \\&- \sum_{\iot} \int_{\dTi} \kappa^i\GRAD w_T^i\SCAL\bn_T((J_T^{k+1}(U\upex))^i-u^i),
\end{align*}
where we have exploited the choice for the face polynomials in the definition~\eqref{eq:def_hat_JT} of $\hat J_T^{k+1}(U\upex)$ and the fact that $\kappa^i\GRAD w_T^i\SCAL\bn_T\in \polP^k(\calF_{(\dT)^i})$.
Since $(U\upex_T)^i_{|\dTi}=u^i_{|\dTi}$, we obtain
\begin{align*}
n_T(P_T^{k+1}(U\upex)-U\upex_T,W_T) ={}&n_T(J_T^{k+1}(U\upex)-U\upex_T,W_T)
\\&- \sum_{\iot} \int_{\dTi} \kappa^i\GRAD w_T^i\SCAL\bn_T(J_T^{k+1}(U\upex)-U\upex_T)^i.
\end{align*}
To bound the first term on the right-hand side, we use the boundedness property~\eqref{eq:nT_bnd} of $n_T(\SCAL,\SCAL)$ from Lemma~\ref{lem:stab_bnd_nT} and $|\cdot|_{n_T\sharp}\le \|\cdot\|_{*T}$. To bound the second term, we use the Cauchy--Schwarz inequality followed by the discrete trace inequality~\eqref{eq:trace} to bound $\|\GRAD w_T^i\|_{\dTi}$. 

Let us now prove~\eqref{eq:bnd_Au-u_nT}. Let us set $Z_T=P_T^{k+1}(U\upex)-J_T^{k+1}(U\upex) \in \polP^{k+1}(T^1) \times \polP^{k+1}(T^2)$. Using the stability of $n_T$ from Lemma~\ref{lem:stab_bnd_nT}, we have
\begin{align*}
|Z_T|_{n_T}^2 &\lesssim n_T(Z_T,Z_T) \\
&= n_T(P_T^{k+1}(U\upex)-U\upex_T,Z_T)+n_T(U\upex_T-J_T^{k+1}(U\upex),Z_T).
\end{align*}
Using~\eqref{eq:bnd_sup_nT}, we can estimate the first term on the right-hand side as follows:
\[
n_T(P_T^{k+1}(U\upex)-U\upex_T,Z_T) \lesssim \|J_T^{k+1}(U\upex)-U\upex_T\|_{*T}  |Z_T|_{n_T}.
\]
Concerning the second term, we invoke the boundedness property~\eqref{eq:nT_bnd} of $n_T(\SCAL,\SCAL)$ from Lemma~\ref{lem:stab_bnd_nT} and $|\cdot|_{n_T\sharp}\le \|\cdot\|_{*T}$ to infer that
\begin{align*}
n_T(U\upex_T-J_T^{k+1}(U\upex),Z_T) &\lesssim |J_T^{k+1}(U\upex)-U\upex_T|_{n_T\sharp} |Z_T|_{n_T}\\
&\le \|J_T^{k+1}(U\upex)-U\upex_T\|_{*T} |Z_T|_{n_T}.
\end{align*}
Combining these two bounds, we infer that
\[
|Z_T|_{n_T} \lesssim \|J_T^{k+1}(U\upex)-U\upex_T\|_{*T}.
\]
Finally, using a triangle inequality leads to
\[
|P_T^{k+1}(U\upex)-U\upex_T|_{n_T} \le |Z_T|_{n_T} + |J_T^{k+1}(U\upex)-U\upex_T|_{n_T},
\]
which leads to the expected estimate since $|\cdot|_{n_T}\le \|\cdot\|_{*T}$.

Finally, let us prove~\eqref{eq:bnd_sT_hatJ}. We have
\[
s_T(\hat J_T^{k+1}(U\upex),\hat J_T^{k+1}(U\upex)) = \sum_{\iot} \kappa^ih_T^{-1} \|\Pi^k_{\dTi}((J_T^{k+1}(U\upex))^i-\Pi^k_{\dTi}(u^i))\|_{\dTi}^2,
\]
and observing that 
\begin{align*}
\|\Pi^k_{\dTi}((J_T^{k+1}(U\upex))^i-\Pi^k_{\dTi}(u^i))\|_{\dTi} &=
\|\Pi^k_{\dTi}((J_T^{k+1}(U\upex))^i-u^i)\|_{\dTi} \\
&= \|\Pi^k_{\dTi}((J_T^{k+1}(U\upex)-U\upex_T)^i)\|_{\dTi} \\
&\le \|(J_T^{k+1}(U\upex)-U\upex_T)^i\|_{\dTi},
\end{align*}
we infer the expected estimate. 
\end{proof}

\subsection{Consistency and boundedness}

We can now derive our key estimate regarding the consistency of the discrete problem~\eqref{eq:discrete_pb}. 

\begin{lemma}[Consistency and boundedness] \label{lem:consist}
Assume that $U\upex\in H^s(\Omega^1)\times H^s(\Omega^2)$, $s>\frac32$.
Let $\hat U_h\in \hat \calX_{h0}$ solve~\eqref{eq:discrete_pb}.
For all $\hat W_h \in \hat\calX_{h0}$, let us define
\begin{equation*}
\calF(\hat W_h) = \sum_{T\in\calT_h\uncut} \hat a_T\uncut(\hat \jmath_T^{k+1}(u\upex_T)-\hat u_T,\hat w_T)+ \sum_{T\in\calT_h^\Gamma} \hat a_T\cut(\hat J_T^{k+1}(U\upex)-\hat U_T,\hat W_T). 
\end{equation*}
Recall that $|\cdot|_{\hat a_h}$ is defined by~\eqref{eq:def_norm_ah}. 
The following holds true:
\begin{equation*}
|\calF(\hat W_h)| \lesssim \bigg( \sum_{T\in\calT_h\uncut} \|j_T^{k+1}(u\upex_T)-u\upex_T\|_{*T}^2+ \sum_{T\in\calT_h^\Gamma} \|J_T^{k+1}(U\upex)-U\upex_T\|_{*T}^2 \bigg)^{\frac12} |\hat W_h|_{\hat a_h}.
\end{equation*}
\end{lemma}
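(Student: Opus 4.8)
The statement is a consistency-plus-boundedness estimate: I must show that the residual functional $\calF(\hat W_h)$, which measures how far the interpolant of the exact solution is from solving the discrete problem, is controlled by the approximation errors $\|j_T^{k+1}(u\upex_T)-u\upex_T\|_{*T}$ and $\|J_T^{k+1}(U\upex)-U\upex_T\|_{*T}$, times the discrete norm $|\hat W_h|_{\hat a_h}$. The plan is to unfold the definition of $\calF(\hat W_h)$, use that $\hat U_h$ solves~\eqref{eq:discrete_pb} (so $\sum_T \hat a_T(\hat u_T,\hat w_T)=\hat\ell_h(\hat W_h)$), and thereby replace the discrete-solution terms by the right-hand side $\hat\ell_h$. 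This turns $\calF(\hat W_h)$ into a genuine \emph{consistency residual}: the difference between applying the discrete bilinear forms to the interpolant and the linear form $\hat\ell_h$ that encodes the exact data $f$, $g_{\textup{D}}$, $g_{\textup{N}}$.

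The core step is then to relate $\hat a_T\cut(\hat J_T^{k+1}(U\upex),\hat W_T)$ (and the uncut analogue) to integrals of the exact solution. For the cut cells I would expand $\hat a_T\cut$ via its definition~\eqref{eq:def_al_int} into the $n_T(R_T^{k+1}(\hat J_T^{k+1}),R_T^{k+1}(\hat W_T))$ reconstruction term plus the stabilization $s_T$. Using the defining property~\eqref{eq:def_RT} of $R_T^{k+1}$, I can rewrite the reconstruction term in terms of $n_T(P_T^{k+1}(U\upex),\cdot)$ and boundary contributions. The key is to insert the exact solution: since $-\DIV(\kappa\GRAD u)=f$ holds in each $T^i$, integration by parts on each sub-cell produces the cell integral $\int_{T^i} fw_{T^i}$, a boundary term on $(\partial T)^i$, and a term on $T^\Gamma$. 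The interface terms must reproduce, via the transmission conditions $\jump{u}_\Gamma=g_{\textup{D}}$ and $\jump{\kappa\GRAD u}_\Gamma\SCAL\bn_\Gamma=g_{\textup{N}}$, exactly the data terms appearing in $\hat\ell_T\cut$, including the $\phi_T(W_T)$ contraption. This is where the specific design of $\hat\ell_T\cut$ and the Nitsche form $n_{T\Gamma}$ pays off: the consistency terms cancel against $\hat\ell_T\cut$, leaving only residuals built from $P_T^{k+1}(U\upex)-U\upex_T$ and the projection defects on the faces.

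Once the residual is reduced to these approximation-error pieces, the estimation is routine: on each cut cell I bound the leftover $n_T$-type term using~\eqref{eq:bnd_sup_nT} from Lemma~\ref{lem:prelim}, which gives $\lesssim\|J_T^{k+1}(U\upex)-U\upex_T\|_{*T}\,|W_T|_{n_T}$; the stabilization leftover is controlled by~\eqref{eq:bnd_sT_hatJ} together with a Cauchy--Schwarz bound against $s_T(\hat W_T,\hat W_T)^{\frac12}$. On the uncut cells, the elliptic-projector property (Lemma~\ref{lem:approx_uncut}) makes the reconstruction term vanish against $w_T$ up to the projection error, and the face terms are handled identically. Summing over all cells and applying a discrete Cauchy--Schwarz inequality, each factor $|W_T|_{n_T}$ or $s_T(\hat W_T,\hat W_T)^{\frac12}$ is absorbed into $|\hat W_h|_{\hat a_h}$ via its definition~\eqref{eq:def_norm_ah}, while the approximation-error factors assemble into the stated $\ell^2$-sum.

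The main obstacle is the interface bookkeeping: one must carefully integrate by parts on each sub-cell $T^i$ separately, track the normal derivative and jump terms on $T^\Gamma$, and verify that the Nitsche consistency terms match the definition of $\hat\ell_T\cut$ (in particular the asymmetric weighting by $\kappa^1$ and the role of $\phi_T(W_T)$) so that the data $g_{\textup{D}},g_{\textup{N}}$ cancel exactly, leaving a clean residual in $P_T^{k+1}(U\upex)-U\upex_T$. A subtle point is that the consistency argument uses the \emph{exact} solution (so $u^i$ has the regularity $H^s$, $s>\tfrac32$, needed to give meaning to the trace of $\kappa\GRAD u$ on $T^\Gamma$), which is precisely why $|\cdot|_{n_T\sharp}$ and the multiplicative trace inequality enter through Lemma~\ref{lem:prelim} rather than the purely discrete trace inequality.
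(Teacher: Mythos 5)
Your proposal is correct and follows essentially the same route as the paper's proof: you trade the $\hat a_T(\hat u_T,\hat w_T)$ terms for $\hat \ell_h(\hat W_h)$ using the discrete equations, recover the Nitsche consistency identity $n_{T\Gamma}(U\upex_T,W_T)$ from the transmission conditions by elementwise integration by parts, and bound the three residual pieces via~\eqref{eq:bnd_sup_nT}, \eqref{eq:bnd_sT_hatJ}, and a discrete-trace argument before a Cauchy--Schwarz against $|\hat W_h|_{\hat a_h}$. The only step left implicit in your plan that a full write-up must make explicit is the use of the continuity of the exact normal fluxes across the (sub-)faces to insert $w_{\dT}$ and $w_{(\dT)^i}$ into the boundary integrals, which is what puts the face residual into the form controlled by the stabilization part of $|\hat W_h|_{\hat a_h}$.
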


\begin{proof}
We first observe that, for all $T\in\calT_h\uncut$,
\begin{align*}
\hat a_T\uncut(\hat \jmath_T^{k+1}(u\upex_T),\hat w_T) = {}& a_T(p_T^{k+1}(u\upex_T),r_T^{k+1}(\hat w_T)) + s_T(\hat \jmath_T^{k+1}(u\upex_T),\hat w_T) \\
= {}&a_{T}(p_T^{k+1}(u\upex_T),w_T) +  s_T(\hat \jmath_T^{k+1}(u\upex_T),\hat w_T) \\
& - \int_{\dT}  \kappa_T \GRAD p_T^{k+1}(u\upex_T)\SCAL \bn_T(w_T-w_{\partial T}), 
\end{align*}
and for all $T\in\calT_h^\Gamma$,
\begin{align*}
\hat a_T\cut(\hat J_T^{k+1}(U\upex),\hat W_T) = {}& n_T(P_T^{k+1}(U\upex),R_T^{k+1}(\hat W_T)) + s_T(\hat J_T^{k+1}(U\upex),\hat W_T) \\
= {}&n_{T}(P_T^{k+1}(U\upex),W_T) +  s_T(\hat J_T^{k+1}(U\upex),\hat W_T) \\
& - \sum_{\iot} \int_{(\partial T)^i}  (\kappa \GRAD P_T^{k+1}(U\upex))^i\SCAL \bn_T(w_{T^i}-w_{(\dT)^i}), 
\end{align*}
where we have used the definitions~\eqref{eq:def_p_T} and~\eqref{eq:def_AT} of $p_T^{k+1}$ and $P^{k+1}_T$ and the definitions~\eqref{eq:def_rT} and~\eqref{eq:def_RT} of the reconstruction operators for $r^{k+1}_T(\hat w_T)$ and $R_T^{k+1}(\hat W_T)$ (and the symmetry of the bilinear forms $a_T$ and $n_T$). Moreover, using the fact that the discrete solution solves~\eqref{eq:discrete_pb}, we infer that
\[
\sum_{T\in\calT_h\uncut} \hat a_T\uncut(\hat u_T,\hat w_T) +
\sum_{T\in\calT_h^\Gamma} \hat a_T\cut(\hat U_T,\hat W_T) =: \Psi\uncut + \Psi\cut,
\]
where
\begin{align*}
\Psi\uncut =
\sum_{T\in\calT_h\uncut} \int_{T} fw_T = \sum_{T\in\calT_h\uncut} \bigg( \int_{T} \kappa_T \GRAD u\upex_T\SCAL \GRAD w_T - \int_{\partial T} \!\!\kappa_T\GRAD u\upex_T\SCAL \bn_T w_T \bigg),
\end{align*}
and
\begin{align*}
\Psi\cut &= 
\sum_{T\in\calT_h^\Gamma} \bigg( \sum_{\iot} \int_{T^i} fw_{T^i} + \int_{T^\Gamma} (g_{\textup{N}}w_{T^2}+g_{\textup{D}}\phi_T(W_T)) \bigg) \\
&= \sum_{T\in\calT_h^\Gamma} \bigg( \sum_{\iot} \int_{T^i} -\DIV(\kappa^i\GRAD u^i)w_{T^i} + \int_{T^\Gamma} (g_{\textup{N}}w_{T^2}+g_{\textup{D}}\phi_T(W_T)) \bigg) \\
&= \sum_{T\in\calT_h^\Gamma} \bigg(\sum_{\iot} \bigg( \int_{T^i} \kappa^i \GRAD u^i\SCAL \GRAD w_{T^i} - \!\!\int_{(\partial T)^i} \!\!(\kappa\GRAD u)^i\SCAL \bn_Tw_{T^i} \bigg) + n_{T\Gamma} (U\upex_T,W_T)\bigg),
\end{align*}
where we have used the following identity:
\begin{align*}
&- \sum_{\iot} \int_{T^\Gamma} \kappa^i\GRAD u^i\SCAL \bn_{T^i} w_{T^i}
 + \int_{T^\Gamma} (g_{\textup{N}}w_{T^2}+g_{\textup{D}}\phi_T(W_T)) \\
&= -\int_{T^\Gamma} \kappa^1\GRAD u^1\SCAL \bn_\Gamma \jump{W_T}_\Gamma + \int_{T^\Gamma} g_{\textup{D}}\phi_T(W_T)= n_{T\Gamma} (U\upex_T,W_T),
\end{align*}
recalling that $\jump{\kappa\GRAD u}_\Gamma\SCAL\bn_\Gamma=g_{\textup{N}}$ and $\jump{u}_\Gamma=g_{\textup{D}}$. Therefore, we have
\[
\Psi\cut = \sum_{T\in\calT_h^\Gamma} \bigg( n_T(U\upex_T,W_T) - \sum_{\iot} \int_{\dTi} (\kappa\GRAD U\upex_T)^i\SCAL \bn_T w_{T^i} \bigg).
\]
Putting the above identities together leads to $\calF(\hat W_h)=\calF\uncut(\hat W_h)+\calF^\Gamma(\hat W_h)$ with
\begin{align*}
\calF\uncut(\hat W_h) ={}& \sum_{T\in\calT_h\uncut} \bigg( a_{T}(p_T^{k+1}(u\upex_T)-u\upex_T,w_T)
+ s_T(\hat \jmath_T^{k+1}(u\upex_T),\hat w_T) \\
&- \int_{\dT}  \kappa_T \GRAD (p_T^{k+1}(u\upex_T)-u\upex_T)\SCAL \bn_T(w_{T}-w_{\dT}) \bigg), \\
\calF^\Gamma(\hat W_h) ={}& \sum_{T\in\calT_h^\Gamma} \bigg( n_{T}(P_T^{k+1}(U\upex)-U\upex_T,W_T)
+ s_T(\hat J_T^{k+1}(U\upex),\hat W_T) \\
&- \sum_{\iot} \int_{(\partial T)^i}  (\kappa \GRAD (P_T^{k+1}(U\upex)-U\upex_T))^i\SCAL \bn_T(w_{T^i}-w_{(\dT)^i}) \bigg).
\end{align*}
where we have used the continuity of the exact fluxes across $\dT$ for all $T\in\calT_h\uncut$ and across $(\dT)^i$ for all $\iot$ and $T\in\calT_h^\Gamma$ to add/subtract $w_{\dT}$ and $w_{(\dT)^i}$ in the integrals over $\dT$ and $\dTi$, respectively. 
It remains to bound the three terms composing $\calF\uncut(\hat W_h)$ and $\calF^\Gamma(\hat W_h)$ using Lemma~\ref{lem:approx_uncut} and Lemma~\ref{lem:prelim}, respectively. We only detail the bound on the three terms composing $\calF^\Gamma(\hat W_h)$ since the bound on $\calF\uncut(\hat W_h)$ uses similar arguments. To bound the first term, we use~\eqref{eq:bnd_sup_nT} and to bound the second term, we use~\eqref{eq:bnd_sT_hatJ}. For the third term, we use the Cauchy--Schwarz inequality so that we need to bound $(\kappa^i)^{\frac12} h_T^{\frac12} \|\GRAD (P_T^{k+1}(U\upex)-U\upex_T)^i\|_{\dTi}$ for all $\iot$. We can then add/subtract $(J_T^{k+1}(U\upex))^i$ and use the triangle inequality to obtain
\begin{align*}
(\kappa^i)^{\frac12} h_T^{\frac12}\|\GRAD (P_T^{k+1}(U\upex)-U\upex_T)^i\|_{\dTi}\le {}&(\kappa^i)^{\frac12} h_T^{\frac12}\|\GRAD (P_T^{k+1}(U\upex)-J_T^{k+1}(U\upex))^i\|_{\dTi} \\&+ (\kappa^i)^{\frac12} h_T^{\frac12}\|\GRAD (J_T^{k+1}(U\upex)-U\upex_T)^i\|_{\dTi}. 
\end{align*}
Since the second term on the right-hand side is bounded by $\|J_T^{k+1}(U\upex)-U\upex_T\|_{*T}$, we can focus on the first term. Using the discrete trace inequality~\eqref{eq:trace} followed by the triangle inequality where we add/subtract $(U\upex_T)^i$, we infer that
\begin{align*}
(\kappa^i)^{\frac12} h_T^{\frac12}\|\GRAD (P_T^{k+1}(U\upex)-J_T^{k+1}(U\upex))^i\|_{\dTi} \lesssim {}& (\kappa^i)^{\frac12} \|\GRAD (P_T^{k+1}(U\upex)-U\upex_T)^i\|_{T^i} \\
& + (\kappa^i)^{\frac12} \|\GRAD (U\upex_T-J_T^{k+1}(U\upex))^i\|_{T^i}.
\end{align*}
To conclude, we bound the first term using~\eqref{eq:bnd_Au-u_nT}, whereas the second term is readily bounded by $\|J_T^{k+1}(U\upex)-U\upex_T\|_{*T}$.
\end{proof}

\subsection{Main result}

We can now state our main result on the error analysis.

\begin{theorem}[Error estimate] \label{th:main}
Assume that $U\upex\in H^s(\Omega^1)\times H^s(\Omega^2)$, $s>\frac32$.
Let $\hat U_h\in \hat\calX_{h0}$ solve~\eqref{eq:discrete_pb}. Then, the following bound holds true:
\begin{align} \label{eq:err_est}
\calE := {}&\sum_{T\in\calT_h\uncut} \kappa_T\|\GRAD(u\upex_T-u_T)\|_T^2 +
\sum_{T\in\calT_h^\Gamma} \sum_{\iot} \kappa^i\|\GRAD (U\upex_T-U_T)^i\|_{T^i}^2 \nonumber \\
& + \sum_{T\in\calT_h^\Gamma} \kappa^1h_T^{-1}\|g_{\textup{D}}-\jump{U_T}_\Gamma\|_{T^\Gamma}^2 + \sum_{T\in\calT_h^\Gamma} (\kappa^2)^{-1}h_T\|g_{\textup{N}}-\jump{\kappa \GRAD U_T}_\Gamma\SCAL\bn_\Gamma\|_{T^\Gamma}^2 \nonumber \\
\lesssim {}&\sum_{T\in\calT_h\uncut} \|j_T^{k+1}(u\upex_T)-u\upex_T\|_{*T}^2
+ \sum_{T\in\calT_h^\Gamma} \|J_T^{k+1}(U\upex)-U\upex_T\|_{*T}^2 =:\calB.
\end{align}
Moreover, if $U\upex \in H^{k+2}(\Omega^1)\times H^{k+2}(\Omega^2)$, the following bounds hold true:
\begin{align} 
\calE &\lesssim \sum_{T\in\calT_h\uncut} \kappa_Th_T^{2(k+1)}|u\upex_T|_{H^{k+2}(T)}^2 +
\sum_{T\in\calT_h^\Gamma} \sum_{\iot} \kappa^i h_T^{2(k+1)}|E^i(u^i)|_{H^{k+2}(\Tdg)}^2 \nonumber \\
&\lesssim \sum_{\iot} \kappa^i h^{2(k+1)} |u^i|_{H^{k+2}(\Omega^i)}^2.\label{eq:cv_rate}
\end{align}
\end{theorem}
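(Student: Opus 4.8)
The plan is to run the classical energy argument for coercive problems: control the discrete error in the $|\cdot|_{\hat a_h}$-norm via the consistency estimate of Lemma~\ref{lem:consist} and the stability of Lemma~\ref{lem:stab_aT}, and then convert this control into $\calE$ by triangle inequalities. First I would introduce the discrete error $\hat E_h$ whose components are $\hat e_T:=\hat\jmath_T^{k+1}(u\upex_T)-\hat u_T$ for $T\in\calT_h\uncut$ and $\hat E_T:=\hat J_T^{k+1}(U\upex)-\hat U_T$ for $T\in\calT_h^\Gamma$. Since $u=0$ on $\partial\Omega$ and the face components of $\hat\jmath_T^{k+1}$ and $\hat J_T^{k+1}$ are $L^2$-projections of $u$, they vanish on the boundary faces, so $\hat E_h\in\hat\calX_{h0}$, where $|\cdot|_{\hat a_h}$ is a norm. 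By the definition of $\calF$ and the bilinearity of $\hat a_h$, one has $\calF(\hat W_h)=\hat a_h(\hat E_h,\hat W_h)$ for all $\hat W_h\in\hat\calX_{h0}$; taking $\hat W_h=\hat E_h$ and chaining Lemma~\ref{lem:stab_aT} (summed over the cells) with Lemma~\ref{lem:consist} gives
\[
|\hat E_h|_{\hat a_h}^2\lesssim\hat a_h(\hat E_h,\hat E_h)=\calF(\hat E_h)\lesssim\calB^{\frac12}\,|\hat E_h|_{\hat a_h},
\]
hence $|\hat E_h|_{\hat a_h}^2\lesssim\calB$.

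It then remains to dominate the four sums defining $\calE$. For the two bulk-gradient sums I would write $u\upex_T-u_T=(u\upex_T-j_T^{k+1}(u\upex_T))+e_T$ and $U\upex_T-U_T=(U\upex_T-J_T^{k+1}(U\upex))+E_T$, where $e_T$ and $E_T$ denote the cell components of $\hat e_T$ and $\hat E_T$. The polynomial parts $\kappa_T\|\GRAD e_T\|_T^2$ and $\sum_{\iot}\kappa^i\|\GRAD (E_T)^i\|_{T^i}^2$ are bounded by $|\hat e_T|_{\hat a_T}^2$ and $|\hat E_T|_{\hat a_T}^2$ (see~\eqref{eq:def_seminorm_aT_uncut} and~\eqref{eq:def_seminorm_aT}), hence by $|\hat E_h|_{\hat a_h}^2\lesssim\calB$ after summation, while the remaining parts are $\le\|j_T^{k+1}(u\upex_T)-u\upex_T\|_{*T}^2$ and $\le\|J_T^{k+1}(U\upex)-U\upex_T\|_{*T}^2$, i.e.\ they sit inside $\calB$. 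For the jump sum I would use $g_{\textup{D}}=\jump{U\upex_T}_\Gamma$ so that the residual becomes $\jump{U\upex_T-U_T}_\Gamma$; splitting as above, the approximation part is controlled by $\|J_T^{k+1}(U\upex)-U\upex_T\|_{*T}^2$ (which contains $\kappa^1h_T^{-1}\|\jump{\cdot}_\Gamma\|_{T^\Gamma}^2$, see~\eqref{eq:def_cut_*T}) and the polynomial part is $\kappa^1h_T^{-1}\|\jump{E_T}_\Gamma\|_{T^\Gamma}^2\lesssim|\hat E_T|_{\hat a_T}^2$.

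The flux sum is the main obstacle and the place where both contrast- and cut-robustness must be earned. Writing $V:=U\upex_T-U_T=(v^1,v^2)$ and using $g_{\textup{N}}=\jump{\kappa\GRAD U\upex_T}_\Gamma\SCAL\bn_\Gamma$, the residual is $\jump{\kappa\GRAD V}_\Gamma\SCAL\bn_\Gamma=\kappa^1\GRAD v^1\SCAL\bn_\Gamma-\kappa^2\GRAD v^2\SCAL\bn_\Gamma$. Weighting by $(\kappa^2)^{-1}h_T$ and applying Young's inequality produces $\tfrac{(\kappa^1)^2}{\kappa^2}h_T\|\GRAD v^1\|_{T^\Gamma}^2+\kappa^2h_T\|\GRAD v^2\|_{T^\Gamma}^2$; since $\kappa^1<\kappa^2$ yields $\tfrac{(\kappa^1)^2}{\kappa^2}\le\kappa^1$, these are $\le\kappa^1h_T\|\GRAD v^1\|_{T^\Gamma}^2+\kappa^2h_T\|\GRAD v^2\|_{T^\Gamma}^2$, exactly the $T^\Gamma$-gradient traces present in $\|\cdot\|_{*T}$. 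After splitting $V=(U\upex_T-J_T^{k+1}(U\upex))+E_T$, the approximation part is absorbed into $\|J_T^{k+1}(U\upex)-U\upex_T\|_{*T}^2$, whereas the polynomial part requires the discrete trace inequality~\eqref{eq:trace} (with $l=k$, applied to the components of $\GRAD (E_T)^i$, which lie in $\polP^k(T^i)$) to trade the interface trace for the bulk gradient: $\kappa^ih_T\|\GRAD (E_T)^i\|_{T^\Gamma}^2\lesssim\kappa^i\|\GRAD (E_T)^i\|_{T^i}^2\le|\hat E_T|_{\hat a_T}^2$. Collecting the four bounds yields $\calE\lesssim\calB+|\hat E_h|_{\hat a_h}^2\lesssim\calB$, which is the first claim of the theorem.

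Finally, to reach~\eqref{eq:cv_rate} under $U\upex\in H^{k+2}(\Omega^1)\times H^{k+2}(\Omega^2)$, I would insert the approximation estimates of Lemma~\ref{lem:app_j_uncut} and Lemma~\ref{lem:app} into $\calB$, which directly gives the first inequality. The second inequality then follows from $h_T\le h$, the stability of the extension operators $E^i$, and a finite-overlap argument for the balls $\Tdg$ (uniform by shape-regularity and by the uniform constant $\theta_{\textup{mtr}}$ from Lemma~\ref{lem:mult_tr}), yielding $\sum_{T\in\calT_h^\Gamma}|E^i(u^i)|_{H^{k+2}(\Tdg)}^2\lesssim|u^i|_{H^{k+2}(\Omega^i)}^2$ together with $\sum_{T\in\calT_h^i}|u\upex_T|_{H^{k+2}(T)}^2\le|u^i|_{H^{k+2}(\Omega^i)}^2$.
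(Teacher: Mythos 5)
Your proposal is correct and follows essentially the same route as the paper: the energy argument $|\hat E_h|_{\hat a_h}^2\lesssim\calF(\hat E_h)\lesssim\calB^{\frac12}|\hat E_h|_{\hat a_h}$ via Lemmas~\ref{lem:stab_aT} and~\ref{lem:consist}, then triangle inequalities against the $\|\cdot\|_{*T}$-norms, the inequality $(\kappa^1)^2/\kappa^2\le\kappa^1$ for the flux residual, and the discrete trace inequality~\eqref{eq:trace} to control the interface gradient traces of the polynomial error. Your additional remarks (vanishing boundary face components of the interpolant, finite overlap of the balls $\Tdg$) are correct details that the paper leaves implicit.
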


\begin{proof}
Let $\hat J_h \in \hat \calX_{h0}$ be such that its local components attached to the cells $T\in\calT_h\uncut$ are $\hat \jmath_T:=\hat \jmath^{k+1}_T(u\upex_T)$ and those attached to the cells $T\in\calT_h^\Gamma$ are $\hat J_T:=\hat J^{k+1}_T(U\upex)$ (the face components of $\hat J^h$ are indeed well defined). Using stability (Lemma~\ref{lem:stab_aT} and~\eqref{eq:def_norm_ah}), consistency/boundedness (Lemma~\ref{lem:consist}), and the Cauchy--Schwarz inequality, we infer that
\begin{align*}
|\hat J_h-\hat U_h|_{\hat a_h}^2 &= 
\sum_{T\in\calT_h\uncut} |\hat \jmath_T-\hat u_T|_{\hat a_T}^2 +
\sum_{T\in\calT_h^\Gamma} |\hat J_T-\hat U_T|_{\hat a_T}^2 \\
& \lesssim \sum_{T\in\calT_h\uncut} \hat a_T\uncut(\hat \jmath_T-\hat u_T,\hat \jmath_T-\hat u_T) + 
\sum_{T\in\calT_h^\Gamma} \hat a_T\cut(\hat J_T-\hat U_T,\hat J_T-\hat U_T) \\
&= \calF(\hat J_h-\hat U_h) \lesssim \calB^{\frac12} |\hat J_h-\hat U_h|_{\hat a_h}.
\end{align*}
This implies that
\[
\sum_{T\in\calT_h\uncut} |\hat \jmath_T-\hat u_T|_{\hat a_T}^2 +
\sum_{T\in\calT_h^\Gamma} |\hat J_T-\hat U_T|_{\hat a_T}^2 \lesssim \calB.
\]
Recalling the definitions~\eqref{eq:def_seminorm_aT_uncut} and~\eqref{eq:def_seminorm_aT} of $|\cdot|_{\hat a_T}$, we infer that
\begin{equation} \label{eq:bnd_err_disc1}
\sum_{T\in\calT_h\uncut} |j^{k+1}_T(u\upex_T)-u_T|_{a_T}^2 + 
\sum_{T\in\calT_h^\Gamma} |J^{k+1}_T(U\upex)-U_T|_{n_T}^2 \lesssim \calB,
\end{equation} 
and using the discrete trace inequality~\eqref{eq:trace}, we also infer that
\begin{equation} \label{eq:bnd_err_disc2}
\sum_{\iot} \kappa^ih_T\|\GRAD(J^{k+1}_T(U\upex)-U_T)^i\|_{T^\Gamma}^2 \lesssim \calB.
\end{equation} 
For all $T\in\calT_h\uncut$, we add/subtract $u\upex_T$ in~\eqref{eq:bnd_err_disc1} and we use that
$|\cdot|_{a_T}\le \|\cdot\|_{*T}$; for all $T\in\calT_h\cut$, we add/subtract
$U\upex_T$ in \eqref{eq:bnd_err_disc1}-\eqref{eq:bnd_err_disc2} and we use that $|\cdot|_{n_T}^2 + \sum_{\iot} \kappa^ih_T\|\GRAD(\cdot)\|_{T^\Gamma}^2 \le\|\cdot\|_{*T}^2$, $\jump{U\upex_T}_\Gamma=g_{\textup{D}}$, and
\[
(\kappa^2)^{-\frac12}h_T^{\frac12}\|g_{\textup{N}}-\jump{\kappa \GRAD U_T}_\Gamma\SCAL\bn_\Gamma\|_{T^\Gamma} \le \sum_{\iot} (\kappa^ih_T)^{\frac12} \|\GRAD(U\upex_T-U_T)^i\|_{T^\Gamma},
\]
since $\jump{\kappa \GRAD U\upex_T}_\Gamma\SCAL\bn_\Gamma=g_{\textup{N}}$ and $\kappa^1 < \kappa^2$. This leads to~\eqref{eq:err_est}. Finally, the estimate~\eqref{eq:cv_rate} follows by combining~\eqref{eq:err_est} with Lemmas~\ref{lem:app_j_uncut} and~\ref{lem:app}.
\end{proof}



\section{Building the mesh}
\label{sec:build_mesh}

In this section, we show how to build a mesh satisfying Assumption~\ref{ass:Gamma} and Assumption~\ref{ass:ball}. Our goal is not to propose an optimized construction, but simply to show that both Assumptions can be satisfied. \cor{A more practically-oriented discussion on algorithmic aspects is postponed to future work.} We assume that we are initially given a shape-regular (polyhedral) mesh. Our goal is to show that we can satisfy Assumption~\ref{ass:Gamma} by refining the mesh and then Assumption~\ref{ass:ball} by means of a local cell-agglomeration procedure. 

The shape-regularity of the mesh sequence implies that 
there is $\rho\in(0,1)$ so that the following geometric properties hold true for all $T\in\calT_h$, \textup{(i)} there is $\bx_T\in T$ so that $B(\bx_T,\rho h_T) \subset T$ ; \textup{(ii)} $T_\rho := \{\bx\in \Real^d,\; d(\bx,T)\le \rho h_T\} \subset \Delta(T)$ where $\Delta(T) := \{T'\in\calT_h\tq T\cap T'\ne\emptyset\}$ is the collection of the mesh cells touching $T$; 
\textup{(iii)} $\rho \max_{T'\in \Delta(T)}h_{T'} \le \min_{T''\in\Delta(T)}h_{T''}$; 
\textup{(iv)} for all $\bx\in\Omega$ with $d(\bx,\partial\Omega)\ge h$, and all $\alpha\in (0,1)$, letting $T_1\in\calT_h$ be s.t.~$\bx\in T_1$, there is $T_2 \in\calT_h$ s.t.~$T_2\cap B(\bx,\alpha h_{T_1})$ has positive $d$-measure and there is $\bx_{T_2}\in T_2$ so that $B(\bx_{T_2},\rho\alpha h_{T_2})\subset T_2 \cap B(\bx,\alpha h_{T_1})$; this last property means that for any open ball (not too close to the boundary), there is at least one mesh cell s.t.~its intersection with this ball contains a smaller ball with equivalent diameter.

\subsection{Assumption~\ref{ass:Gamma}: mesh refinement}\label{sec:Gamma_ref}

Let us show that Assumption~\ref{ass:Gamma} can be satisfied if the mesh is fine enough.

\begin{lemma}[Assumption~\ref{ass:Gamma}]
Assumption~\ref{ass:Gamma} holds true with $\gamma=\frac14$ provided $hM\le 1$ where $M$ is an upper bound on the curvature of $\Gamma$.
\end{lemma}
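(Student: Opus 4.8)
The plan is to construct, for each cut cell $T\in\calT_h^\Gamma$, an explicit point $\hat\bx_T$ sitting a fixed fraction of $h_T$ off the interface along a normal direction, and to exploit the curvature bound $hM\le 1$ to show that the interface patch $T^\Gamma$ is nearly flat at the scale $h_T$, so that $\hat\bx_T$ stays uniformly far from every tangent plane $T_\bs\Gamma$, $\bs\in T^\Gamma$. Concretely, I would fix a reference point $\bs_0\in T^\Gamma$, set $\bn_0:=\bn_\Gamma(\bs_0)$, and define
\[
\hat\bx_T := \bs_0 + \beta\, h_T\, \bn_0
\]
for a fixed constant $\beta$ (taking $\beta=3$ makes the proximity bound tight and fixes the value of $\gamma$). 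The proximity requirement is then immediate: since $T^\Gamma\subset T$ we have $\|\bs_0-\bs\|_{\ell^2}\le\operatorname{diam}(T)=h_T$ for all $\bs\in T^\Gamma$, whence
\[
\|\hat\bx_T-\bs\|_{\ell^2}\le \beta h_T+\|\bs_0-\bs\|_{\ell^2}\le(\beta+1)h_T=4h_T=\gamma^{-1}h_T,
\]
which is exactly the first condition of Assumption~\ref{ass:Gamma} with $\gamma=\tfrac14$.

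For the second condition I would write $d(\hat\bx_T,T_\bs\Gamma)=|(\hat\bx_T-\bs)\SCAL\bn_\Gamma(\bs)|$ and split it as
\[
(\hat\bx_T-\bs)\SCAL\bn_\Gamma(\bs)=\beta h_T\,\bn_0\SCAL\bn_\Gamma(\bs)+(\bs_0-\bs)\SCAL\bn_\Gamma(\bs).
\]
The two terms are controlled by the two standard consequences of the curvature bound, obtained from a local $C^2$ graph representation of $\Gamma$ over the tangent plane at $\bs_0$ (resp.\ at $\bs$), which is licit over a patch of Euclidean size $h_T$ precisely because $h_TM\le hM\le1$ keeps us in the region where $\Gamma$ is a graph with Hessian controlled by $M$. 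First, the deviation of $\Gamma$ from its tangent plane gives $|(\bs_0-\bs)\SCAL\bn_\Gamma(\bs)|\le\tfrac{M}{2}\|\bs_0-\bs\|_{\ell^2}^2\le\tfrac{M}{2}h_T^2\le\tfrac12 h_T$. Second, the Gauss map varies slowly, $\|\bn_0-\bn_\Gamma(\bs)\|_{\ell^2}\lesssim Mh_T$, so that $\bn_0\SCAL\bn_\Gamma(\bs)\ge(1+M^2h_T^2)^{-1/2}\ge\tfrac{1}{\sqrt2}$ (up to a harmless universal factor coming from expressing the graph Hessian in terms of the principal curvatures). Combining, and using $h_TM\le1$ once more,
\[
(\hat\bx_T-\bs)\SCAL\bn_\Gamma(\bs)\ge\frac{\beta}{\sqrt2}h_T-\frac12 h_T\ge\frac14 h_T=\gamma h_T
\]
for the chosen $\beta$; in particular the quantity is positive, so $\hat\bx_T$ lies on one fixed side of every tangent plane and the absolute value is redundant.

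The main obstacle is the geometric input of the second paragraph rather than the algebra: one must turn the pointwise hypothesis ``$M$ bounds the curvature of $\Gamma$'' into the two finite-scale estimates above, uniformly over the patch $T^\Gamma$ whose extent may be as large as $h_T$. The clean way is to center a local graph $g$ at the relevant base point with $g(0)=0$ and $\GRAD g(0)=0$, to note that $h_TM\le1$ guarantees that $\Gamma$ remains such a graph with $\|\GRAD^2 g\|$ bounded by a fixed multiple of $M$ on the whole patch, and then to Taylor-expand $g$ to second order. This simultaneously yields the tangent-plane deviation bound (from $|g(\bx')|\le\tfrac{M}{2}|\bx'|^2$) and the normal-rotation bound (from $|\GRAD g(\bx')|\le M|\bx'|$ together with the explicit formula for the unit normal of a graph). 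The only remaining care is the bookkeeping of constants so that, with $\beta=3$ and $hM\le1$, both inequalities close with the single value $\gamma=\tfrac14$; note that the proximity inequality is the binding one and forces $\gamma^{-1}=4$, while the tangent-plane inequality then holds with room to spare.
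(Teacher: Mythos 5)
Your proposal is correct and follows essentially the same route as the paper: both place $\hat\bx_T$ at a fixed multiple of $h_T$ along the normal at a reference point $\bs_0\in T^\Gamma$, use the local graph representation of $\Gamma$ over $T_{\bs_0}\Gamma$ (licit since $hM\le1$), and bound $d(\hat\bx_T,T_\bs\Gamma)$ by a first-order Taylor argument controlling the rotation of the normal and the deviation of $\Gamma$ from its tangent planes. The only differences are bookkeeping (offset $3h_T$ versus $2h_T$, and an explicit two-term splitting of $(\hat\bx_T-\bs)\SCAL\bn_\Gamma(\bs)$ in place of the paper's bound on the function $f$ via $f(\bzero)$ and $\|\nabla_{\bxi'}f\|_{L^\infty}$), which lead to the same constant $\gamma=\tfrac14$.
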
 

\begin{proof}
Let $T\in\calT_h^\Gamma$. Fix a point $\bs_0 \in T^\Gamma$ and introduce the local coordinates $\bxi = (\bxi',\xi_d)$, with zero at $\bs_0$, where $\bxi' \in \mathbb{R}^{d-1}$ are the coordinates in the tangent plane $T_{\bs_0}\Gamma$ and $\xi_d$ is the coordinate in the normal direction to the tangent plane at $\bs_0$. Owing to the assumption $hM\le 1$, we can write $T^\Gamma=\{\bs := (\bxi',\psi(\bxi')), \; \bxi'\in V(\bzero)\}$ where $V(\bzero)$ is a neighborhood of $\bzero$ in $\Real^{d-1}$ and $\psi : V(\bzero)\to \Real$ is a smooth map. Note that $\psi(\bzero)=0$, $\nabla_{\bxi'}\psi(\bzero)=\bzero$, and that a normal vector to the tangent plane $T_\bs\Gamma$ is $\bn(\bxi')=(-\nabla_{\bxi'}\psi(\bxi'),1)$. Let us set $\hat \bx_T = (\bzero,-2h_T)$ and consider the function
\[
f(\bxi') = (\bs-\hat\bx_T) \cdot \bn_{\Gamma}(\bxi') = -\bxi' \cdot \nabla_{\bxi'} \psi(\bxi') + \psi(\bxi') + 2 h_T.
\]
Then $f(\bzero) = 2 h_T$, and since $\nabla_{\bxi'} f(\bxi') = -\bxi' \cdot D_{\bxi'\bxi'}^2 \psi(\bxi')$ and $\|\bxi'\|_{\ell^2}\le h_T$, we infer that 
\[
f(\bxi') \ge f(\bzero) - h_T\|\nabla_{\bxi'} f\|_{L^\infty(V(\bzero))} 
\ge 2h_T-h_T^2M \ge h_T.
\]
Since $\|\bn(\bxi')\|_{\ell^2} \le 1 + \|\nabla_{\bxi'}\psi(\bxi')\|_{\ell^2} \le 1+hM\le 2$, we infer that
\[
d(\hat \bx_T,T_\bs\Gamma) = \|\bn(\bxi')\|_{\ell^2}^{-1} f(\bxi') 
\ge \frac12 h_T, \qquad \forall \bs\in T^\Gamma.
\]
In addition, we have
$\|\hat\bx_T-\bs\|_{\ell^2} \le \|\bxi'\|_{\ell^2} + |\psi(\bxi')+2h_T|
\le 3h_T+|\psi(\bxi')| \le 4h_T$ since $\psi(\bzero)=0$, $\nabla_{\bxi'}\psi(\bzero)=\bzero$ and $hM\le 1$.
\end{proof}

\subsection{Assumption~\ref{ass:ball}: local cell-agglomeration} \label{sec:agglo}

Assume that we are given an initial shape-regular (polyhedral) mesh $\calT_h$ (with parameter $\rho$) that satisfies Assumption~\ref{ass:Gamma} (with parameter $\gamma$), but that does not satisfy Assumption~\ref{ass:ball}. We now describe a simple local cell-agglomeration procedure to produce a new mesh that is still shape-regular and that satisfies Assumption~\ref{ass:Gamma} and Assumption~\ref{ass:ball}. The main idea is that we eliminate any mesh cell in $\calT_h$ that is cut unfavorably by the interface by merging this cell with a neighboring one. 
\cor{An illustration is provided in Figure~\ref{fig:agglo}.}
\begin{figure}[htb]
\begin{center}
\includegraphics[width=0.6\linewidth]{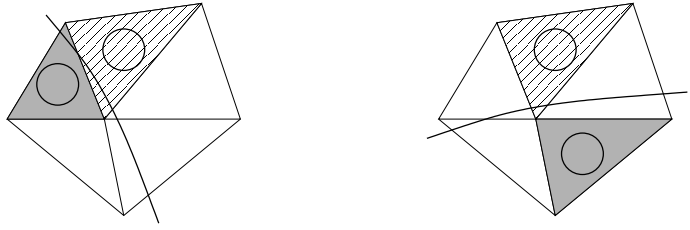}
\end{center}
\caption{\cor{Triangular mesh cell $T$ (filled by dashes) cut unfavorably by the interface; the companion cell in the agglomeration procedure is shown in gray for two generic situations: on the left, both cells share a face, and on the right, they only share a vertex; in both cases, the companion cell is in $\Delta(T)$, and the two balls in the agglomerated cell for Assumption~\ref{ass:ball} are shown.}}
\label{fig:agglo}
\end{figure}
We consider the partition $\calT_h = \calT_h^1\cup \calT_h^\Gamma\cup \calT_h^2$, and picking a value $\delta\in(0,1)$ (the precise value of $\delta$ is determined below), we further partition $\calT_h^\Gamma$ into 
\begin{equation} \label{eq:partition_T_Gamma}
\calT_h^\Gamma = \calT_h^{\textsc{ok}} \cup \calT_h^{\textsc{ko},1}\cup\calT_h^{\textsc{ko},2},
\end{equation} 
where $T\in \calT_h^{\textsc{ok}}$ iff the condition~\eqref{eq:boule_in_Ti} from Assumption~\ref{ass:ball} holds true for all $i\in\{1,2\}$, whereas $T\in \calT_h^{\textsc{ko},i}$ if the condition fails for $i\in\{1,2\}$. Let us first give two useful lemmas underpinning our local cell-agglomeration procedure. 

\begin{lemma}[Partition of $\calT_h^\Gamma$] \label{lem:disjoint}
The subsets $\calT_h^{\textsc{ko},1}$ and $\calT_h^{\textsc{ko},2}$ are disjoint if the mesh-size $h$ is small enough and if $\delta \le \frac13\rho$.
\end{lemma}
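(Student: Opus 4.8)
The plan is to show that if a cut cell $T$ fails Assumption~\ref{ass:ball} on the $\Omega^1$ side (i.e.~$T\in\calT_h^{\textsc{ko},1}$), then it cannot also fail on the $\Omega^2$ side, and symmetrically. Since membership in $\calT_h^{\textsc{ko},1}$ means no ball $B(\tilde\bx_{T^1},\delta h_T)$ fits inside $T^1=T\cap\Omega^1$, the sub-cell $T^1$ is geometrically thin; the key is to turn this thinness into a \emph{lower} bound showing that $T^2$ is correspondingly fat, so that a ball of radius $\delta h_T$ \emph{does} fit inside $T^2$, forcing $T\notin\calT_h^{\textsc{ko},2}$.

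First I would exploit the shape-regularity property (i) from the start of Section~\ref{sec:build_mesh}: there is a ball $B(\bx_T,\rho h_T)\subset T$. The interface $\Gamma$ cuts this ball into the two pieces $B(\bx_T,\rho h_T)\cap\Omega^i$, and because $T^\Gamma$ is a single smooth connected component with curvature controlled by $hM\le1$ (so that inside the cell $\Gamma$ is, up to $O(h_T^2)$ corrections, nearly flat), the interface behaves like a near-hyperplane slicing the ball. The center $\bx_T$ lies on one side of $\Gamma$; say $d(\bx_T,\Gamma)=:\beta h_T$. If $\bx_T$ is well inside $\Omega^2$, then the full ball $B(\bx_T,\beta h_T)$ sits in $\Omega^2$, providing the required interior ball for $T^2$ as soon as $\beta\ge\delta$. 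So the crux is to bound $\beta$ from below whenever $T\in\calT_h^{\textsc{ko},1}$.

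The main obstacle, and the heart of the argument, is this quantitative step: assuming $T\in\calT_h^{\textsc{ko},1}$ forces the $\Omega^1$-portion of $B(\bx_T,\rho h_T)$ to be so thin that $\bx_T$ must lie at distance at least some fixed fraction of $\rho h_T$ into $\Omega^2$. Concretely, if $\bx_T$ were within $\delta h_T$ of $\Gamma$ on the $\Omega^2$ side, then (using the near-flatness of $\Gamma$ inside the ball) the cap $B(\bx_T,\rho h_T)\cap\Omega^1$ would itself contain a ball of radius comparable to $(\rho-\delta)h_T/2$ centered slightly on the $\Omega^1$ side, contradicting $T\in\calT_h^{\textsc{ko},1}$ provided $\delta$ is small relative to $\rho$. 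Running this dichotomy shows $d(\bx_T,\Gamma)\ge c\rho h_T$ into $\Omega^2$; choosing $\delta\le\frac13\rho$ makes $c\rho\ge\delta$, so $B(\bx_T,\delta h_T)\subset T^2$, i.e.~$T\notin\calT_h^{\textsc{ko},2}$. The symmetric argument (swapping the roles of $\Omega^1,\Omega^2$) shows $T\in\calT_h^{\textsc{ko},2}\Rightarrow T\notin\calT_h^{\textsc{ko},1}$, establishing disjointness.

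Throughout, the hypothesis that $h$ is small enough is used precisely to make the curvature correction negligible: the $O(h_T^2 M)=O(h_T^2)$ deviation of $\Gamma$ from its tangent plane inside $B(\bx_T,\rho h_T)$ must be dominated by the $O(\rho h_T)$ geometric margins in the cap estimate, which holds once $h_T M$ (hence $hM$) is sufficiently small. I would therefore state the argument first for a genuine hyperplane and then absorb the curvature term into the constants, flagging that the threshold on $h$ is exactly what guarantees the near-flat regime. The constant $\frac13$ in $\delta\le\frac13\rho$ is the clean value that survives once these margins are accounted for, though the precise fraction is not essential.
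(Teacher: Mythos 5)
Your proposal is correct and rests on the same geometric mechanism as the paper's proof: the inscribed ball $B(\bx_T,\rho h_T)\subset T$ from shape regularity, the local near-flatness of $\Gamma$ at scale $h$ (the paper encodes this as a strip containment $\Gamma\cap B(\bs,\alpha)\subset S_\Gamma(\bs,2\alpha,\lambda)$, you as an $O(Mh_T^2)$ curvature correction), and the elementary fact that the larger cap of a ball sliced by a near-hyperplane contains a ball of radius at least $\tfrac13\rho h_T\ge\delta h_T$. The only difference is presentational: the paper directly exhibits a $\tfrac13\rho h_T$-ball on one side of $\Gamma$, while you run the equivalent dichotomy on $d(\bx_T,\Gamma)$ as a contrapositive.
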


\begin{proof}
For $\bs\in\Gamma$ and positive real numbers $\alpha,\beta$, we define the strip of length $\alpha$ and aspect ratio $\beta$ centered at the point $\bs$ and aligned with the tangent plane $T_\bs\Gamma$ as
\[
S_\Gamma(\bs,\alpha,\beta):=\bs + \{\bt+\bn,\; \bt\in T_\bs\Gamma,\; 2\|\bt\|_{\ell^2}\le \alpha, \; \bn \in (T_\bs\Gamma)^\perp,\; 2\|\bn\|_{\ell^2}\le \alpha\beta
\}.
\]
The regularity of $\Gamma$ implies that, for all $\lambda\in (0,1]$, there is $\delta(\lambda)>0$, so that, for all $\bs\in\Gamma$ and all $\alpha\in (0,\delta(\lambda)]$, 
\begin{equation} \label{eq:strip}
\Gamma \cap B(\bs,\alpha) \subset S_\Gamma(\bs,2\alpha,\lambda).
\end{equation}
(Note that the diameter of $B(\bs,\alpha)$ is $2\alpha$.)
Let $T\in\calT_h^\Gamma$ and let $\bs \in T^\Gamma$. Recall from mesh regularity (property~(i)) that $B(\bx_T,\rho h_T) \subset T$. Let us apply~\eqref{eq:strip} with $\lambda=\frac13\rho$. Assume that $h\le \delta(\frac13\rho)$. Then $T^\Gamma \subset \Gamma \cap B(\bs,h_T) \subset S_\Gamma(\bs,2h_T,\frac13\rho)$. Elementary geometric considerations show that there is a point $\tilde\by_T \in T$ so that $B(\tilde\by_T,\frac13\rho h_T) \subset B(\bx_T,\rho h_T)\setminus S_\Gamma(\bs,2h_T,\frac13\rho)$, which implies, in particular, that $B(\tilde\by_T,\frac13\rho h_T) \cap \Gamma = \emptyset$. Therefore, $B(\tilde\by_T,\frac13\rho h_T)$ is a subset of either $T^1$ or $T^2$. 
\end{proof}

\begin{lemma}[Finding a suitable neighbor] \label{lem:neighbor}
Assume that the mesh-size is small enough and take $\delta=\frac14 \rho^3$. 
Let $\iot$. 
For all $T\in \calT_h^{\textsc{ko},i}$, there is a mesh cell in $\Delta(T)$ such that the condition~\eqref{eq:boule_in_Ti} holds true for $i$, i.e., this mesh cell is in $(\calT_h^i \cup \calT_h^{\textsc{ok}} \cup \calT_h^{\textsc{ko},\overline\i}) \cap \Delta(T)$, where $\overline\i=3-i$ (so that $\overline\i=2$ if $i=1$ and $\overline\i=1$ if $i=2$).
\end{lemma}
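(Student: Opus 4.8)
The plan is to build the companion cell explicitly, by stepping off $\Gamma$ into $\Omega^i$ and then calling on the mesh-regularity property~(iv) to harvest a genuinely $d$-dimensional piece of a neighboring cell lying in $\Omega^i$. Throughout, I fix a point $\bs\in T^\Gamma$ and let $\sigma\in\{-1,+1\}$ be the sign for which $\sigma\bn_\Gamma(\bs)$ points into $\Omega^i$ (so $\sigma=-1$ for $i=1$ and $\sigma=+1$ for $i=2$, since $\bn_\Gamma$ points from $\Omega^1$ to $\Omega^2$). I set $\bx^\ast:=\bs+\sigma\tfrac{\rho}{2}h_T\,\bn_\Gamma(\bs)$ and I will take $\alpha:=\tfrac14\rho^2$ in property~(iv).

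The first step is to certify that a full ball around $\bx^\ast$ sits inside $\Omega^i$. Applying the strip inclusion~\eqref{eq:strip} at $\bs$ with aspect ratio $\lambda=\tfrac{\rho}{8}$ (legitimate once $h\le\delta(\tfrac{\rho}{8})$, since $h_T\le h$) confines $\Gamma\cap B(\bs,h_T)$ to the slab $S_\Gamma(\bs,2h_T,\tfrac{\rho}{8})$, whose normal half-thickness about $T_\bs\Gamma$ is $\tfrac{\rho}{8}h_T$. As $\bx^\ast$ lies at normal distance $\tfrac{\rho}{2}h_T$ from that tangent plane and at Euclidean distance $\tfrac{\rho}{2}h_T$ from $\bs$, one checks that $B(\bx^\ast,\tfrac{\rho}{4}h_T)\subset B(\bs,h_T)$ (because $\tfrac{\rho}{2}+\tfrac{\rho}{4}<1$) and that this ball clears the slab on the $\Omega^i$ side (its points have normal coordinate at least $\tfrac{\rho}{2}h_T-\tfrac{\rho}{4}h_T=\tfrac{\rho}{4}h_T>\tfrac{\rho}{8}h_T$). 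Hence $B(\bx^\ast,\tfrac{\rho}{4}h_T)\cap\Gamma=\emptyset$, and by connectedness $B(\bx^\ast,\tfrac{\rho}{4}h_T)\subset\Omega^i$.

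Next I invoke property~(iv) at $\bx^\ast$. Since $\bs\in\Gamma$ and $d(\Gamma,\partial\Omega)\ge 2h$ while $\|\bx^\ast-\bs\|\le\tfrac12 h$, the point $\bx^\ast$ stays at distance at least $h$ from $\partial\Omega$, so (iv) applies and yields a cell $T_2$ together with $B(\bx_{T_2},\rho\alpha h_{T_2})\subset T_2\cap B(\bx^\ast,\alpha h_{T_1})$, where $T_1\ni\bx^\ast$. Because $d(\bx^\ast,T)\le\tfrac{\rho}{2}h_T\le\rho h_T$, property~(ii) places $\bx^\ast$ (hence $T_1$) in $\Delta(T)$, and property~(iii) then gives $\rho h_T\le h_{T_1}\le\rho^{-1}h_T$, so that $\alpha h_{T_1}\le\tfrac14\rho^2\cdot\rho^{-1}h_T=\tfrac{\rho}{4}h_T$. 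Two inclusions now follow at once. On the one hand $B(\bx^\ast,\alpha h_{T_1})\subset B(\bx^\ast,\tfrac{\rho}{4}h_T)\subset\Omega^i$, so the ball $B(\bx_{T_2},\rho\alpha h_{T_2})$ lies in $(T_2)^i$. On the other hand $d(\bx^\ast,T)+\alpha h_{T_1}\le\tfrac{\rho}{2}h_T+\tfrac{\rho}{4}h_T\le\rho h_T$ places $B(\bx^\ast,\alpha h_{T_1})$, and a fortiori the positive-measure ball $B(\bx_{T_2},\rho\alpha h_{T_2})\subset T_2$, inside the collar $T_\rho$; since $T_\rho$ is covered by the cells of $\Delta(T)$ (property~(ii)) and the mesh cells are pairwise disjoint, this forces $T_2\in\Delta(T)$. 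Finally $\rho\alpha=\tfrac14\rho^3=\delta$ gives exactly $B(\bx_{T_2},\delta h_{T_2})\subset(T_2)^i$, i.e.~condition~\eqref{eq:boule_in_Ti} holds for $T_2$ and the index $i$; in particular $T_2\neq T$ (as that condition fails for $T$), and $T_2\in(\calT_h^i\cup\calT_h^{\textsc{ok}}\cup\calT_h^{\textsc{ko},\overline\i})\cap\Delta(T)$, which is the claim.

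The main obstacle is the simultaneous satisfaction of three competing demands on a single ball: it must clear the interface slab (forcing $\bx^\ast$ away from $\Gamma$), it must remain inside the $\rho h_T$-collar $T_\rho$ so that the disjointness argument certifies membership in $\Delta(T)$ (forcing $\bx^\ast$ close to $T$), and the radius delivered by property~(iv) must still dominate $\delta h_{T_2}$. The calibration $\lambda=\tfrac{\rho}{8}$, $\alpha=\tfrac14\rho^2$, $\delta=\tfrac14\rho^3$ is precisely what reconciles them, and the size comparison $h_{T_1}\asymp h_T$ from property~(iii)---needed to convert the $\alpha h_{T_1}$-scaled ball of~(iv) into an $h_T$-scaled neighborhood of $T$---is the technical linchpin.
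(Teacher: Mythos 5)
Your proof is correct and follows essentially the same route as the paper's: use the strip inclusion \eqref{eq:strip} to place a ball of radius $\sim\rho h_T$ inside $T_\rho\cap\Omega^i$ away from $\Gamma$, invoke mesh-regularity property (iv) there to extract a cell $T_2$ containing a ball of radius $\ge\delta h_{T_2}$ in $T_2\cap\Omega^i$, and conclude $T_2\in\Delta(T)$ from the positive measure of $T_2\cap T_\rho$. The only differences are cosmetic: you construct the off-interface point explicitly by stepping along the normal (where the paper simply asserts the existence of such a ball) and you use a fixed $\alpha=\tfrac14\rho^2$ where the paper takes the cell-dependent $\alpha=\tfrac14\rho h_T h_{T_1}^{-1}$, both yielding $\rho\alpha\ge\delta=\tfrac14\rho^3$.
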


\begin{proof}
Fix $\iot$ and let $T\in \calT_h^{\textsc{ko},i}$. 
Owing to mesh regularity (property~(ii)), we have $T_\rho := \{\bx\in \Real^d,\; d(\bx,T)\le \rho h_T\} \subset \Delta(T)$. Let $\bs \in T^\Gamma$. Assume that $h\le \delta(\frac14\rho)$ (see~\eqref{eq:strip}), so that 
$\Gamma \cap B(\bs,h_T) \subset S_\Gamma(\bs,2h_T,\frac14\rho)$. Since the width of $S_\Gamma$ is smaller than or equal to $\tfrac12\rho h_T$, there is a ball $B(\bs',\frac14\rho h_T) \subset T_\rho \cap\Omega^i \setminus S_\Gamma(\bs,2h_T,\frac14\rho)$. Note that $d(\bs',\partial\Omega)\ge d(\bs,\partial \Omega)-h\ge h$ since $d(\Gamma,\partial\Omega)\ge2h$ and $\bs\in\Gamma$. Since $\bs'\in T_\rho$, there is $T_1\in\Delta(T)$ s.t.~$\bs'\in T_1$. Using mesh regularity (property (iv) with $\alpha = \frac14\rho h_T h_{T_1}^{-1}\le \frac14$ owing to property~(iii)), we infer that there is $T_2 \in \calT_h$ so that $T_2\cap B(\bs',\frac14 \rho h_{T})$ has positive $d$-measure and there is a ball $B(\bx_{T_2},\rho\alpha h_{T_2})\subset T_2 \cap B(\bs',\frac14 \rho h_{T})$. Since $\rho\alpha=\frac14\rho^2h_T h_{T_1}^{-1} \ge \frac14 \rho^3 = \delta$ (using again property (iii)), we infer that the mesh cell $T_2$ satisfies the condition~\eqref{eq:boule_in_Ti} for $i$. Moreover, $T_2\cap T_\rho$ has positive $d$-dimensional measure, so that $T_2\in \Delta(T)$. This concludes the proof. 
\end{proof}

We can now present our local cell-agglomeration procedure. We consider the partition~\eqref{eq:partition_T_Gamma} with $\delta:=\frac14\rho^3$, and assume that the mesh-size is small enough so that Lemma~\ref{lem:disjoint} and Lemma~\ref{lem:neighbor} hold true (note that $\delta\le \frac13\rho$). The procedure is as follows:
(1) For all $T\in \calT_h^{\textsc{ko},1}$, we choose a neighboring mesh cell $N_1(T) \in (\calT_h^{\textsc{ok}} \cup \calT_h^1\cup \calT_h^{\textsc{ko},2}) \cap \Delta(T)$ (this is possible owing to Lemma~\ref{lem:neighbor}). We denote the collection of the cells in $\calT_h^{\textsc{ko},2}$ chosen in the above step as the subset $\hat \calT_h^{\textsc{ko},2}\subset \calT_h^{\textsc{ko},2}$. (2) For all $T\in \calT_h^{\textsc{ko},2} \setminus \hat \calT_h^{\textsc{ko},2}$, we choose a neighboring mesh cell $N_2(T) \in (\calT_h^{\textsc{ok}} \cup \calT_h^2\cup \calT_h^{\textsc{ko},1}) \cap \Delta(T)$.
(3) For all $i\in\{1,2\}$, let $\calN_i$ be the collection of all the cells in $\calT_h^{\textsc{ok}} \cup \calT_h^i \cup \calT_h^{\textsc{ko},\overline\i}$ that have been selected at least once in one of the two previous steps. For all $T^\sharp\in \calN_1\cup \calN_2$, we define the agglomerated cell 
\begin{equation} \label{eq:star_sharp}
T^* := T^\sharp\cup \{T\in \calT_h^{\textsc{ko},1},\; N_1(T)=T^\sharp\} \cup \{T\in \calT_h^{\textsc{ko},2} \setminus \hat \calT_h^{\textsc{ko},2}, \; N_2(T) = T^\sharp\},
\end{equation}
and we observe that $T^* \subset \Delta(T^\sharp)$.
We collect all the agglomerated cells in $\calT_h^{\textup{agglo}}$, and we define the new mesh
\begin{equation}
\calT_h^* := \bigg( (\calT_h^{\textsc{ok}} \cup \calT_h^1 \cup \calT_h^2)\setminus (\calN_1\cup \calN_2)\bigg) \; \cup\; \calT_h^{\textup{agglo}}.
\end{equation}

\begin{remark}[Choice of $N_i(T)$]
In Steps 1 and 2, we do not require that $T$ and $N_i(T)$ share a face, it is sufficient that they share a point (actually, it is just sufficient that the set $T\cup N_i(T)$ has a diameter of order $h_T$, but we do not explore this further here). Thus, there is some freedom in the choice of $N_i(T)$. In practice, one can choose $N_i(T)$ sharing a face with $T$ whenever possible.
\end{remark}

It is easy to see that the newly created mesh $\calT_h^*$ is still shape-regular and satisfies Assumption~\ref{ass:Gamma}. Shape regularity follows since the agglomeration of a finite number of shape-regular neighbors remains shape regular, but with a possibly smaller parameter $\rho^*<\rho$. Assumption~\ref{ass:Gamma} is satisfied since each cell in the original mesh satisfies the assumption and any finite union of cells satisfying this assumption must also satisfy it, but once again with a possibly smaller parameter $\gamma^*<\gamma$.
Let us finally verify that $\calT_h^*$ also satisfies Assumption~\ref{ass:ball}. 

\begin{lemma}[Assumption~\ref{ass:ball}] \label{lemma:boule_in_T}
Assume that the mesh-size is small enough and that the cell-agglomeration procedure uses the cut parameter $\delta=\frac14\rho^3$. Then 
Assumption~\textup{\ref{ass:ball}} holds true for the mesh $\calT_h^*$ with $\delta^*=\frac13\rho \delta=\frac{1}{12}\rho^4$.
\end{lemma}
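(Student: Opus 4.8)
The plan is to check Assumption~\ref{ass:ball} separately on each cut cell of the agglomerated mesh $\calT_h^*$. First I would note that the cut cells of $\calT_h^*$ are exactly the cells of $\calT_h^{\textsc{ok}}$ that were not used as companions, together with the agglomerated cells in $\calT_h^{\textup{agglo}}$ (each of the latter contains at least one KO child, hence is cut by $\Gamma$). For a surviving cell $T\in\calT_h^{\textsc{ok}}$ there is nothing to prove: by definition it already satisfies~\eqref{eq:boule_in_Ti} for both $i\in\{1,2\}$ with parameter $\delta$ and its diameter is unchanged, so the claim follows at once from $\delta^*=\frac13\rho\delta\le\delta$. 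The entire work therefore concerns an agglomerated cell $T^*=T^\sharp\cup\{\text{KO children}\}$ as defined in~\eqref{eq:star_sharp}.

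Next I would isolate two geometric inputs. The first is a size comparison: every constituent cell $T'$ of $T^*$ meets $T^\sharp$ (a child $T$ satisfies $N_i(T)=T^\sharp\in\Delta(T)$, so $T\cap T^\sharp\neq\emptyset$), hence $T'\in\Delta(T^\sharp)$, and mesh-regularity property~(iii) gives $\rho h_{T^\sharp}\le h_{T'}\le\rho^{-1}h_{T^\sharp}$; a triangle inequality through a point shared with $T^\sharp$ then yields $h_{T^*}\le(1+2\rho^{-1})h_{T^\sharp}\le 3\rho^{-1}h_{T^\sharp}$, i.e.\ $h_{T^\sharp}\ge\frac13\rho\,h_{T^*}$ and $h_{T'}\ge\frac13\rho^2 h_{T^*}$. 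The second input is that a KO,$i$ cell is automatically favorably cut on the opposite side: the proof of Lemma~\ref{lem:disjoint} produces, for every $T\in\calT_h^{\textsc{ko},i}$, a ball of radius $\frac13\rho h_T$ contained in $T\cap\Omega^{\overline\i}$. Combined with the defining property of the companions (a cell of $\calN_i$ satisfies~\eqref{eq:boule_in_Ti} for side $i$, hence carries a ball of radius $\delta h$ in its $\Omega^i$-part), this is the heart of the matter: pairing a KO,$i$ cell with a companion that repairs side $i$ creates a cell favorably cut on both sides, because the KO,$i$ cell already covers side $\overline\i$.

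I would then carry out a short case analysis on the type of the base $T^\sharp$, exhibiting in each case one constituent favorably cut on side $1$ and one on side $2$. If $T^\sharp\in\calT_h^{\textsc{ok}}$, both balls are supplied by $T^\sharp$ itself. If $T^\sharp\in\calT_h^i$, the side-$i$ ball is the mesh-regularity ball $B(\bx_{T^\sharp},\rho h_{T^\sharp})\subset T^\sharp\subset\Omega^i$, while the side-$\overline\i$ ball comes from any KO,$i$ child (one exists since $T^\sharp\in\calN_i$). If $T^\sharp\in\calT_h^{\textsc{ko},i}$, the side-$\overline\i$ ball is the large ball of the second input inside $T^\sharp\cap\Omega^{\overline\i}$, and the side-$i$ ball comes from a KO,$\overline\i$ child. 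In every case the witnessing constituent $T'$ satisfies $(T')^j\subset(T^*)^j$, so shrinking the witnessing ball to the concentric ball of radius $\delta^* h_{T^*}$ keeps it inside $(T^*)^j$ as soon as $\delta^* h_{T^*}$ does not exceed the witnessing radius. Tracking constants, the single binding situation is an $\textsc{ok}$ base, where $\delta h_{T^\sharp}\ge\delta\cdot\frac13\rho\,h_{T^*}=\frac{1}{12}\rho^4 h_{T^*}$; this is exactly what pins the value $\delta^*=\frac13\rho\delta=\frac{1}{12}\rho^4$ (the child-supplied balls, of radius $\ge\frac19\rho^3 h_{T^*}\ge\delta^* h_{T^*}$, and the mesh-regularity balls are not binding).

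The main obstacle is the bookkeeping of the case analysis rather than any individual estimate. The delicate point is to confirm that the two-step design, and in particular the set $\hat\calT_h^{\textsc{ko},2}$ of KO,$2$ cells already consumed as companions in Step~1, guarantees that in each agglomerated cell the side on which the base is deficient is repaired by one of its children; this is where the asymmetry between the admissible companion sets $\calN_1$ and $\calN_2$ must be used. I would emphasize that this verification is performed cell-by-cell and is independent of whether the agglomerated cells tile $\Omega$ without overlap, the latter being part of the separately-stated claim that $\calT_h^*$ is a shape-regular mesh. Once the case analysis is complete, the constant chase of the previous paragraph delivers $\delta^*=\frac{1}{12}\rho^4$ and closes the proof.
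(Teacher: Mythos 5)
Your proposal is correct and follows essentially the same route as the paper's proof: surviving $\calT_h^{\textsc{ok}}$ cells are handled trivially, and for each agglomerated cell the ball on the deficient side is supplied by the companion (via Lemma~\ref{lem:neighbor}) while the ball on the opposite side is supplied by a \textsc{ko} constituent (via the ball exhibited in the proof of Lemma~\ref{lem:disjoint}), with the factor $\frac13\rho$ in $\delta^*$ coming from the same diameter comparison $h_{T^*}\le 3\rho^{-1}\min(h_{T^\sharp},h_{T_0^\sharp})$ based on mesh-regularity property (iii). Your case analysis is organized by the type of the base $T^\sharp$ rather than by whether $T^\sharp\in\calN_1$ or $\calN_2$, but the ingredients and constants are the same.
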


\begin{proof}
Let $T^*\in\calT_h^*$ be s.t.~$\mes_{d-1}(T\cap\Gamma)>0$. Then, 
$T^* \in \calT_h^{\textsc{ok}} \setminus (\calN_1\cup\calN_2)$ or $T^*\in \calT_h^{\textup{agglo}}$. In the first case, $T^*$ is also a mesh cell from the original mesh $\calT_h$, and the definition of $\calT_h^{\textsc{ok}}$ implies that the condition~\eqref{eq:boule_in_Ti} is satisfied with the cut parameter $\delta$, and therefore also with the cut parameter $\delta^*\le \delta$. In the second case where $T^*\in \calT_h^{\textup{agglo}}$, let us assume to fix the ideas that the associated cell $T^\sharp$ (see~\eqref{eq:star_sharp}) is in $\calN_1$, so that $T^\sharp=N_1(T_0^\sharp)$ with $T_0^\sharp \in \calT_h^{\textsc{ko},1}$. Owing to Lemma~\ref{lem:disjoint}, the condition~\eqref{eq:boule_in_Ti} is satisfied in $T^\sharp_0$ with parameter $\delta$ and $i=2$, and by construction, this condition is satisfied in $T^\sharp$ with parameter $\delta$ and $i=1$. Since $h_{T^*}\le h_{\Delta(T^\sharp)} \le 3\max_{T'\in\Delta(T^\sharp)} h_{T'} \le
3\rho^{-1} \min(h_{T^\sharp},h_{T_0^\sharp})$ owing to mesh regularity (property (iii)), we infer that the condition~\eqref{eq:boule_in_Ti} is satisfied in $T^*$ with parameter $\delta^*$ and all $\iot$.
\end{proof}

\bibliographystyle{abbrvnat} 
\bibliography{biblio}

\begin{thebibliography}{31}
\providecommand{\natexlab}[1]{#1}
\providecommand{\url}[1]{\texttt{#1}}
\expandafter\ifx\csname urlstyle\endcsname\relax
  \providecommand{\doi}[1]{doi: #1}\else
  \providecommand{\doi}{doi: \begingroup \urlstyle{rm}\Url}\fi

\bibitem[Ayuso~de Dios et~al.(2016)Ayuso~de Dios, Lipnikov, and
  Manzini]{AyLiM:16}
B.~Ayuso~de Dios, K.~Lipnikov, and G.~Manzini.
\newblock The nonconforming virtual element method.
\newblock \emph{ESAIM Math. Model. Numer. Anal.}, 50\penalty0 (3):\penalty0
  879--904, 2016.

\bibitem[Babu\v{s}ka(1970)]{Babuska:70}
I.~Babu\v{s}ka.
\newblock The finite element method for elliptic equations with discontinuous
  coefficients.
\newblock \emph{Computing (Arch. Elektron. Rechnen)}, 5:\penalty0 207--213,
  1970.

\bibitem[Barrett and Elliott(1987)]{BE87}
J.~W. Barrett and C.~M. Elliott.
\newblock Fitted and unfitted finite-element methods for elliptic equations
  with smooth interfaces.
\newblock \emph{IMA J. Numer. Anal.}, 7\penalty0 (3):\penalty0 283--300, 1987.

\bibitem[Bastian and Engwer(2009)]{BasEn:09}
P.~Bastian and C.~Engwer.
\newblock An unfitted finite element method using discontinuous {G}alerkin.
\newblock \emph{Internat. J. Numer. Methods Engrg.}, 79\penalty0 (12):\penalty0
  1557--1576, 2009.

\bibitem[Burman(2010)]{Burman:10}
E.~Burman.
\newblock Ghost penalty.
\newblock \emph{C. R. Math. Acad. Sci. Paris}, 348\penalty0 (21-22):\penalty0
  1217--1220, 2010.

\bibitem[Burman and Ern(2017)]{BE:17}
E.~Burman and A.~Ern.
\newblock A cut-cell {H}ybrid {H}igh-{O}rder method for elliptic problems with
  curved boundaries.
\newblock Technical report, HAL Preprint
  \texttt{hal.archives-ouvertes.fr/hal-01653685}, 2017.

\bibitem[Burman and Zunino(2006)]{BZ06}
E.~Burman and P.~Zunino.
\newblock A domain decomposition method based on weighted interior penalties
  for advection-diffusion-reaction problems.
\newblock \emph{SIAM J. Numer. Anal.}, 44\penalty0 (4):\penalty0 1612--1638,
  2006.

\bibitem[Burman et~al.(2015)Burman, Claus, Hansbo, Larson, and
  Massing]{BCHLM15}
E.~Burman, S.~Claus, P.~Hansbo, M.~G. Larson, and A.~Massing.
\newblock Cut{FEM}: discretizing geometry and partial differential equations.
\newblock \emph{Internat. J. Numer. Methods Engrg.}, 104\penalty0 (7):\penalty0
  472--501, 2015.

\bibitem[Burman et~al.(2017{\natexlab{a}})Burman, Guzm\`an, S\`anchez, and
  Sarkis]{BGSS17}
E.~Burman, J.~Guzm\`an, M.~A. S\`anchez, and M.~Sarkis.
\newblock Robust flux error estimation of an unfitted {N}itsche method for
  high-contrast interface problems.
\newblock \emph{IMA Journal of Numerical Analysis}, 2017{\natexlab{a}}.
\newblock \doi{\texttt{https://doi.org/10.1093/imanum/drx017}}.

\bibitem[Burman et~al.(2017{\natexlab{b}})Burman, Hansbo, and Larson]{BHL16}
E.~Burman, P.~Hansbo, and M.~Larson.
\newblock A cut finite element method with boundary value correction.
\newblock \emph{Math. Comp.}, 2017{\natexlab{b}}.
\newblock \doi{\texttt{https://doi.org/10.1090/mcom/3240}}.

\bibitem[Cicuttin et~al.(2017)Cicuttin, Di~Pietro, and Ern]{CiDPE:17}
M.~Cicuttin, D.~A. Di~Pietro, and A.~Ern.
\newblock Implementation of {D}iscontinuous {S}keletal methods on
  arbitrary-dimensional, polytopal meshes using generic programming.
\newblock \emph{J. Comput. Appl. Math.}, 2017.
\newblock Available
  at~\texttt{http://hal.archives-ouvertes.fr/hal-01429292}{hal-01429292}.

\bibitem[Cockburn et~al.(2009)Cockburn, Gopalakrishnan, and Lazarov]{CoGoL:09}
B.~Cockburn, J.~Gopalakrishnan, and R.~Lazarov.
\newblock Unified hybridization of discontinuous {G}alerkin, mixed, and
  continuous {G}alerkin methods for second order elliptic problems.
\newblock \emph{SIAM J. Numer. Anal.}, 47\penalty0 (2):\penalty0 1319--1365,
  2009.

\bibitem[Cockburn et~al.(2016)Cockburn, Di~Pietro, and Ern]{CoDPE:16}
B.~Cockburn, D.~A. Di~Pietro, and A.~Ern.
\newblock Bridging the {Hybrid High-Order} and {Hybridizable Discontinuous
  Galerkin} methods.
\newblock \emph{ESAIM: Math. Model Numer. Anal. (M2AN)}, 50\penalty0
  (3):\penalty0 635--650, 2016.

\bibitem[Di~Pietro and Ern(2012)]{DiPEr:12}
D.~A. Di~Pietro and A.~Ern.
\newblock \emph{Mathematical {A}spects of {D}iscontinuous {G}alerkin
  {M}ethods}, volume~69 of \emph{Math\'ematiques \& Applications}.
\newblock Springer-Verlag, Berlin, 2012.

\bibitem[Di~Pietro and Ern(2015)]{DiPEr:15}
D.~A. Di~Pietro and A.~Ern.
\newblock A {Hybrid High-Order} locking-free method for linear elasticity on
  general meshes.
\newblock \emph{Comput. Meth. Appl. Mech. Engrg.}, 283:\penalty0 1--21, 2015.

\bibitem[Di~Pietro et~al.(2014)Di~Pietro, Ern, and Lemaire]{DiPEL:14}
D.~A. Di~Pietro, A.~Ern, and S.~Lemaire.
\newblock An arbitrary-order and compact-stencil discretization of diffusion on
  general meshes based on local reconstruction operators.
\newblock \emph{Comput. Meth. Appl. Math.}, 14\penalty0 (4):\penalty0 461--472,
  2014.

\bibitem[Ern and Guermond(2004)]{EG04}
A.~Ern and J.-L. Guermond.
\newblock \emph{Theory and Practice of Finite Elements}, volume 159 of
  \emph{Applied Mathematical Sciences}.
\newblock Springer-Verlag, New York, NY, 2004.

\bibitem[Ern and Guermond(2017)]{ErnGuermond:17}
A.~Ern and J.-L. Guermond.
\newblock Finite element quasi-interpolation and best approximation.
\newblock \emph{ESAIM Math. Model. Numer. Anal. (M2AN)}, 51\penalty0
  (4):\penalty0 1367--1385, 2017.

\bibitem[Ern et~al.(2009)Ern, Stephansen, and Zunino]{ESZ09}
A.~Ern, A.~F. Stephansen, and P.~Zunino.
\newblock A discontinuous {G}alerkin method with weighted averages for
  advection-diffusion equations with locally small and anisotropic diffusivity.
\newblock \emph{IMA J. Numer. Anal.}, 29\penalty0 (2):\penalty0 235--256, 2009.

\bibitem[Frei and Richter(2014)]{FR14}
S.~Frei and T.~Richter.
\newblock A locally modified parametric finite element method for interface
  problems.
\newblock \emph{SIAM J. Numer. Anal.}, 52\penalty0 (5):\penalty0 2315--2334,
  2014.

\bibitem[G\"urkan et~al.(2016)G\"urkan, Sala-Lardies, Kronbichler, and
  Fern\'andez-M\'endez]{GurSKF:16}
C.~G\"urkan, E.~Sala-Lardies, M.~Kronbichler, and S.~Fern\'andez-M\'endez.
\newblock Extended hybridizable discontinous {G}alerkin ({X}-{HDG}) for void
  problems.
\newblock \emph{J. Sci. Comput.}, 66\penalty0 (3):\penalty0 1313--1333, 2016.

\bibitem[G\"urkan et~al.(2017)G\"urkan, Kronbichler, and
  Fern\'andez-M\'endez]{GurKF:17}
C.~G\"urkan, M.~Kronbichler, and S.~Fern\'andez-M\'endez.
\newblock e{X}tended hybridizable discontinuous {G}alerkin with {H}eaviside
  enrichment for heat bimaterial problems.
\newblock \emph{J. Sci. Comput.}, 72\penalty0 (2):\penalty0 542--567, 2017.

\bibitem[Hansbo and Hansbo(2002)]{HH02}
A.~Hansbo and P.~Hansbo.
\newblock An unfitted finite element method, based on {N}itsche's method, for
  elliptic interface problems.
\newblock \emph{Comput. Methods Appl. Mech. Engrg.}, 191\penalty0
  (47-48):\penalty0 5537--5552, 2002.

\bibitem[Huang et~al.(2017)Huang, Wu, and Xiao]{HWX17}
P.~Huang, H.~Wu, and Y.~Xiao.
\newblock An unfitted interface penalty finite element method for elliptic
  interface problems.
\newblock \emph{Comput. Methods Appl. Mech. Engrg.}, 323:\penalty0 439--460,
  2017.

\bibitem[Johansson and Larson(2013)]{JL13}
A.~Johansson and M.~G. Larson.
\newblock A high order discontinuous {G}alerkin {N}itsche method for elliptic
  problems with fictitious boundary.
\newblock \emph{Numer. Math.}, 123\penalty0 (4):\penalty0 607--628, 2013.

\bibitem[Lehrenfeld and Reusken(2017)]{LR16}
C.~Lehrenfeld and A.~Reusken.
\newblock Analysis of a high order unfitted finite element method for an
  elliptic interface problem.
\newblock \emph{IMA J. Numer. Anal.}, 00:\penalty0 1--37, 2017.
\newblock \doi{10.1093/imanum/drx041}.
\newblock first online.

\bibitem[Massing et~al.(2018)Massing, Schott, and Wall]{MaScW:18}
A.~Massing, B.~Schott, and W.~Wall.
\newblock A stabilized {N}itsche cut finite element method for the {O}seen
  problem.
\newblock \emph{Computer Methods in Applied Mechanics and Engineering},
  328:\penalty0 262--300, 2018.

\bibitem[Massjung(2012)]{Massjung12}
R.~Massjung.
\newblock An unfitted discontinuous {G}alerkin method applied to elliptic
  interface problems.
\newblock \emph{SIAM J. Numer. Anal.}, 50\penalty0 (6):\penalty0 3134--3162,
  2012.

\bibitem[Nitsche(1971)]{Nit71}
J.~Nitsche.
\newblock \"{U}ber ein {V}ariationsprinzip zur {L}\"osung von
  {D}irichlet-{P}roblemen bei {V}erwendung von {T}eilr\"aumen, die keinen
  {R}andbedingungen unterworfen sind.
\newblock \emph{Abh. Math. Sem. Univ. Hamburg}, 36:\penalty0 9--15, 1971.
\newblock Collection of articles dedicated to Lothar Collatz on his sixtieth
  birthday.

\bibitem[Schott(2017)]{Schott17}
B.~Schott.
\newblock \emph{Stabilized Cut Finite Element Methods for Complex Interface
  Coupled Flow Problems}.
\newblock PhD thesis, Technische Universit{\"a}t M{\"u}nchen, 2017.

\bibitem[{Wu} and {Xiao}(2010)]{WX10}
H.~{Wu} and Y.~{Xiao}.
\newblock {An unfitted $hp$-interface penalty finite element method for
  elliptic interface problems}.
\newblock \emph{ArXiv e-prints}, July 2010.

\end{thebibliography}

\end{document}
